\documentclass[11pt,a4paper]{article}
\usepackage[utf8]{inputenc}
\usepackage{amsmath}
\usepackage{amssymb}
\usepackage{amsthm}
\usepackage[english]{babel}
\usepackage{booktabs}
\usepackage{graphicx}
\usepackage[colorlinks,allcolors=blue]{hyperref}
\usepackage{pgfplots}
\pgfplotsset{compat=1.14}
\usepackage{xcolor}
\usepackage[hmargin={30mm,30mm},vmargin={30mm,35mm}]{geometry}
\usepackage{caption}
\usepackage{subcaption}
\usepackage{algorithm}
\usepackage{algpseudocode} 
\usepackage{nicefrac} 
\usepackage[affil-it]{authblk}


\usepackage{newtxtext}
\usepackage{newtxmath}


\DeclareRobustCommand{\VEC}[1]{\boldsymbol{#1}}
\pdfstringdefDisableCommands{%
  \renewcommand{\VEC}[1]{#1}%
}

\DeclareRobustCommand{\MAT}[1]{\boldsymbol{\mathbb{#1}}}
\pdfstringdefDisableCommands{%
  \renewcommand{\MAT}[1]{#1}%
}


\numberwithin{equation}{section}    


\usepackage{xpatch} 
\makeatletter
   \xpatchcmd{\@thm}{\fontseries\mddefault\upshape}{}{}{} 
\makeatother

\newtheorem{theorem}{Theorem}

\newtheorem{lemma}[theorem]{Lemma}

\theoremstyle{remark}
\newtheorem{remark}[theorem]{Remark}
\theoremstyle{definition}
\theoremstyle{definition}
\newtheorem{definition}[theorem]{Definition}

\newtheorem*{Unilateral*}{Unilateral contact problem with no friction}

\theoremstyle{plain}

\newtheorem{construction}[theorem]{Construction}

\newtheorem{assumption}[theorem]{Assumption}

\DeclareMathOperator{\tr}{tr}

\newcommand{\vvvert}{\vert\kern-0.25ex\vert\kern-0.25ex\vert}
\newcommand{\norm}[1]{\vvvert #1 \vvvert}
\newcommand{\normHuno}[1]{\left\lVert #1\right\rVert_{1,\Omega}}
\newcommand{\normGamma}[1]{\left\lvert #1\right\rvert_{C,h}}

\newcommand{\normT}[1]{\left\lvert #1\right\rvert_{C,T}}

\newcommand{\energynorm}[1]{\left\lVert #1\right\rVert_{\rm en}}
\newcommand{\localdualnormresidual}[1]{{\color{black}\norm{\mathcal{R}_{\mathcal{T}_T}(#1)}_{*,\tilde{\omega}_T}}}
\newcommand{\dualnormresidual}[1]{{\color{black}\norm{\mathcal{R}(#1)}_*}}
\newcommand{\interpol}{\mathcal{I}_h}

\DeclareCaptionLabelFormat{andtable}{#1~#2  \&  \tablename~\thetable}

\tikzstyle{arrow} = [thick,->,>=stealth]

\definecolor{cadmiumgreen}{rgb}{0.0, 0.42, 0.24}

\definecolor{colorH1ad}{rgb}{0.803921568627451,0.36078431372549,0.36078431372549}
\definecolor{colorEnergyun}{rgb}{0.254901960784314,0.411764705882353,0.882352941176471}
\definecolor{colorEnergyad}{rgb}{1,0.647058823529412,0}
\definecolor{colorTEun}{rgb}{0.0980392156862745,0.0980392156862745,0.43921568627451}
\definecolor{colorUpad}{rgb}{0.647058823529412,0.164705882352941,0.164705882352941}
\definecolor{colorEffEnergy}{rgb}{0.729411764705882,0.333333333333333,0.827450980392157}
\definecolor{colorEffUp}{rgb}{0.580392156862745,0,0.827450980392157}

\definecolor{colorEstTot}{rgb}{0.12156862745098,0.466666666666667,0.705882352941177}
\definecolor{colorEstFlux}{rgb}{1,0.498039215686275,0.0549019607843137}
\definecolor{colorEstLin}{rgb}{0.172549019607843,0.627450980392157,0.172549019607843}
\definecolor{colorEstReg}{rgb}{0.83921568627451,0.152941176470588,0.156862745098039}
\definecolor{colorEstDisc}{rgb}{0.580392156862745,0.403921568627451,0.741176470588235}





\title{A posteriori error estimates via equilibrated stress reconstructions for contact problems approximated by Nitsche's method}

\author[1]{Daniele A. Di Pietro}
\author[,1,2]{Ilaria Fontana \thanks{Corresponding author}}
\author[2]{Kyrylo Kazymyrenko}

\affil[1]{IMAG, Univ Montpellier, CNRS, Montpellier, France}
\affil[2]{IMSIA, UMR EDF-CNRS-CEA-ENSTA 9219, Palaiseau, France}

\date{}

\begin{document}

\maketitle

\begin{abstract}
  We present an a posteriori error estimate based on equilibrated stress reconstructions for the finite element approximation of a unilateral contact problem with weak enforcement of the contact conditions.
  We start by proving a guaranteed upper bound for the dual norm of the residual.
  This norm is shown to control the natural energy norm up to a boundary term, which can be removed under a saturation assumption.
  The basic estimate is then refined to distinguish the different components of the error, and is used as a starting point to design an algorithm including adaptive stopping criteria for the nonlinear solver and automatic tuning of a regularization parameter.
  We then discuss an actual way of computing the stress reconstruction based on the Arnold--Falk--Winther finite elements.
  Finally, after briefly discussing the efficiency of our estimators, we showcase their performance on a panel of numerical tests.
\end{abstract}

\noindent
\textbf{Keywords:} unilateral contact problem, weakly enforced contact conditions, a posteriori error estimate, equilibrated stress reconstruction, Arnold--Falk--Winther mixed finite element, adaptivity
\smallskip\\
\textbf{MSC2020 classification:} 74M15, 74S05, 65N15, 65N30, 65N50

\section{Introduction}

In various fields of solid mechanics and engineering it is essential to describe contact and friction between two bodies.
This is the case, e.g., when modelling foundations and joints in buildings or when considering impact problems. 
In this paper, we focus on a simplified unilateral contact problem without friction, for which contact is mathematically expressed by some inequalities and complementarity conditions.
These constraints translate non-penetration as well as the absence of cohesive forces and friction between the two bodies.
In order to deal with the above constraints from a numerical point of view, different strategies have been proposed in the literature, including penalized formulations, mixed formulations, and weak enforcement à la Nitsche.
We focus on the latter, which does not require the introduction of Lagrange multipliers and results in a coercive formulation that is easy to implement.
The literature on the numerical approximation of contact problems is vast, and a comprehensive state-of-the-art lies outside of the scope of the present papers.
We refer to the review articles \cite{Wohlmuth2011, Chouly-Mlika2017} and references therein for a broader discussion.

Nitsche's method was originally introduced in \cite{Nitsche1971} to weakly enforce Dirichlet boundary conditions. 
Its application to the unilateral contact problem considered in this paper was originally proposed in \cite{Chouly2013}, where the well-posedness and convergence of a conforming finite element scheme are studied.
Further extensions to problems involving friction or multiple bodies can be found in \cite{Chouly2015,Chouly2014,Renard2013,Gustafsson2020};
we also mention \cite{Chouly2020} concerning the adaptation of these ideas to Hybrid High-Order discretizations \cite{Di-Pietro.Ern.ea:14,Di-Pietro.Ern:15,DiPietro2019}.
A residual-based a posteriori error analysis can be found in \cite{Chouly-Fabre2017} based on a saturation assumption.

In this paper we follow a different path and carry out an a posteriori error analysis based on equilibrated tractions in the spirit of the Prager--Synge equality \cite{Prager.Synge:47} (see also \cite{Ern2015} and \cite[Chapter 7]{Vohralik2015}).
This approach, which does not require the saturation assumption when the dual norm of the residual is considered as an error measure, has also the advantage of avoiding unknown constants in the upper bound.
The corresponding error estimate can additionally be refined in order to distinguish the various components of the error (discretization, linearization, regularization).
This decomposition is leveraged here to design a fully adaptive resolution algorithm including an a posteriori-based stopping criterion for the nonlinear solver and the automatic tuning of the regularization parameter.

A crucial ingredient of our a posteriori analysis is a novel $\MAT{H}(\textbf{div})$-conforming stress reconstruction obtained from the numerical solution by solving small problems on patches around mesh vertices.
In the spirit of \cite{Riedlbeck-DiPietro2017, Botti2018}, we use the Arnold--Falk--Winther mixed finite element spaces with weakly enforced symmetry \cite{Arnold2007}; strong symmetry could be enforced using the Arnold--Winther finite element spaces \cite{Arnold2002} as in \cite{Riedlbeck-DiPietro2017}, but would come at a significantly higher computational cost.
The stress reconstruction is built so that its divergence and its normal component on the contact and Neumann portions of the domain boundary are locally in equilibrium with the volume and surface source terms, respectively (such equilibrium properties are not satisfied by stress fields resulting from $\VEC{H}^1$-conforming finite element approximations).
In order to distinguish the various error components, the stress reconstruction is additionally split so as to identify the contributions to the error resulting from linearization and regularization.

The main results of the paper are briefly summarized in what follows.
The basic error estimate of Theorem \ref{theorem - a posteriori error estimation for $u_h$} establishes a guaranteed upper bound for the dual norm of the residual.
Such norm is shown in Theorem \ref{theorem - upper bound of the energy norm} to control the energy norm of the error up to a boundary term on the contact region (this term can be eliminated when a saturation assumption similar to the one used in \cite{Chouly-Fabre2017} holds).
A refined error estimate distinguishing the error components is derived in Theorem \ref{theorem - a posteriori estimation u_h^k}.

The rest of the paper is organized as follows.
In Section \ref{sec:Setting} we describe the unilateral contact problem and its finite element approximation à la Nitsche.
In Section \ref{sec:A posteriori analysis} we derive a basic estimate for the dual norm of the residual and compare this norm with the energy norm.   
Section \ref{sec:identification.error.components} contains a refined version of the estimate distinguishing the error components which serves as a starting point for the development of a fully adaptive algorithm.
An equilibrated stress reconstruction based on the Arnold--Falk--Winther finite element is then proposed in Section \ref{sec - reconstruction}.
Section \ref{sec:efficiency} briefly addresses the efficiency of the error estimators.
Finally, some numerical results performed with the open source software FreeFem++ are presented in Section \ref{sec:Numerical results}.

\section{Setting}\label{sec:Setting}

In this section we discuss the contact problem and its finite element discretization with weakly enforced contact conditions.

\subsection{Unilateral contact problem}\label{sec:setting:pde}

\subsubsection{Strong formulation}

Let $d\in\{2,3\}$ and let $\Omega\subset\mathbb{R}^d$ be a connected open subset of $\mathbb{R}^d$ representing an elastic body. We suppose that  $\Omega$ is a polygon (if $d=2$) or a polyhedron (if $d=3$), and that its boundary $\partial\Omega$ is partitioned into three non-overlapping parts $\Gamma_D$, $\Gamma_N$, and $\Gamma_C$ such that $\lvert\Gamma_D\rvert>0$ and $\lvert\Gamma_C\rvert>0$ ($\lvert\,\cdot\,\rvert$ denotes here the Hausdorff measure).
In its reference configuration, the elastic body is in contact through $\Gamma_C$ with a rigid foundation, and we assume that the (unknown) contact zone in the deformed configuration will be included in $\Gamma_C$. The body is clamped at $\Gamma_D$ and it is subjected to a volume force $\VEC{f}\in \VEC{L}^2(\Omega)$ and to a surface load $\VEC{g}_N\in \VEC{L}^2(\Gamma_N)$ on $\Gamma_N$; see Figure \ref{fig:domain illustration} for an example.

Denoting by $\VEC{n}$ the unit normal vector on $\partial \Omega$ pointing out of $\Omega$, for any displacement field $ \VEC{v}\colon \Omega\to\mathbb{R}^d$ and for any density of surface forces $\VEC{\sigma}(\VEC{v})\VEC{n}$ defined on $\partial\Omega$, we have the following (unique) decomposition into normal and tangential components:
\begin{equation}\label{eq:decomposition.normal.tangential}
    \VEC{v}=v^n\VEC{n}+\VEC{v}^{\VEC{t}} \qquad\qquad \text{and} \qquad\qquad \VEC{\sigma}(\VEC{v})\VEC{n}=\sigma^n(\VEC{v}) \VEC{n} + \VEC{\sigma}^{\VEC{t}}(\VEC{v}).
\end{equation}

We consider the following problem:
Find the displacement field $\VEC{u}\colon\Omega\to \mathbb{R}^d$ such that
\begin{subequations}\label{StrongFormulation}
    \begin{alignat}{2}
      \VEC{\rm div}\, 
      \VEC{\sigma}(\VEC{u})+\VEC{f}&=\VEC{0}
      & \qquad & \text{in $\Omega$}, \label{eq:unilateral equilibrium}
      \\
      \VEC{\sigma}(\VEC{u}) &= \VEC{A} \VEC{\varepsilon} (\VEC{u})
      &\qquad & \text{in $\Omega$},
      \\
      \VEC{u} &= \VEC{0}
      & \qquad & \text{on $\Gamma_D$},
      \\
      \VEC{\sigma}(\VEC{u}) \VEC{n} &= \VEC{g}_N
      & \qquad & \text{on $\Gamma_N$}, \label{eq:unilateral Neumann}
      \\
      u^n\leq 0,\ \sigma^n(\VEC{u})\leq 0,\ \sigma^n(\VEC{u}) u^n &= 0
      & \qquad & \text{on $\Gamma_C$}, \label{eq:unilateral contact 1}
      \\
      \VEC{\sigma}^{\VEC{t}}(\VEC{u}) &= \VEC{0}
      & \qquad & \text{on $\Gamma_C$}, \label{eq:unilateral contact 2}
    \end{alignat}
\end{subequations}
where $\VEC{\varepsilon}(\VEC{v})\coloneqq\frac12(\VEC{\nabla v}+\VEC{\nabla v}^\top)$ is the strain tensor field, $\VEC{\sigma}(\VEC{v}) \in \mathbb{R}^{d\times d}_{\text{sym}}$ is the Cauchy stress tensor, $\VEC{\rm div}$ is the divergence operator acting row-wise on tensor valued functions, and $\VEC{A}$ is the fourth order symmetric elasticity tensor such that, for all second-order tensor $\VEC{\tau}$, $\VEC{A}\VEC{\tau} = \lambda \tr(\VEC{\tau}) \VEC{I}_d + 2\mu \VEC{\tau}$, with $\lambda$ and $\mu$ denoting the Lam\'e parameters.

\begin{remark}[Contact conditions]
  The first contact condition \eqref{eq:unilateral contact 1} is a complementarity condition:
  if, at a point $\VEC{x}\in\Gamma_C$, there is no contact in the deformed configuration (i.e., $u^n < 0$), then the normal stress vanishes at that point (i.e., $\sigma^n(\VEC{u})=0$);
  on the other hand, if at $\VEC{x}\in\Gamma_C$ the normal stress is nonzero (i.e., $\sigma^n(\VEC{u})<0$), then in the deformed configuration we still have contact in $\VEC{x}$ (i.e., $u^n=0$).
  These conditions 
  also account for the absence of
  normal cohesive forces between the elastic body and the rigid foundation.
  The second contact condition \eqref{eq:unilateral contact 2} simply establishes the absence of friction on $\Gamma_C$.
\end{remark}

The incorporation of standard friction models to the following a posteriori theory seems possible, but lies outside of the scope of the present paper.
This subject will be addressed in future works.

\subsubsection{Weak formulation}

Let $D$ denote a measurable set of $\mathbb{R}^d$.
 In what follows, $D$ will be typically either equal to $\Omega$ or to the union of a finite subset of mesh elements.
We denote by $H^s(D)$, $s\in\mathbb{R}$, the usual Sobolev space of index $s$ on $D$, with the convention that $H^0(D)\coloneqq L^2(D)$, the space of square-integrable functions on $D$.
Its vector and tensor versions are denoted respectively by $\VEC{H}^s(D) \coloneqq [H^s(D)]^d$ and $\MAT{H}^s(D) \coloneqq [H^s(D)]^{d\times d}$.
We let, similarly, $\VEC{L}^2(D)\coloneq [L^2(D)]^d$ and $\MAT{L}^2(D)\coloneq [L^2(D)]^{d\times d}$.
Moreover, $\lVert\,\cdot\,\rVert_{s,D}$ denotes the norm of $H^s(D)$ or $\VEC{H}^s(D)$ according to its argument.
The first subscript is omitted when $s=0$, i.e., $\lVert\,\cdot\,\rVert_D$ is the standard norm of $L^2(D)$, $\VEC{L}^2(D)$, or $\MAT{L}^2(D)$ according to its argument.
The usual inner products of these spaces are denoted by $(\,\cdot\,,\,\cdot\,)_{D}$, with the convention that the subscript is omitted when $D=\Omega$.
In what follows, we will also need the space $\MAT{H}(\textbf{div},D)$ spanned by functions of $\MAT{L}^2(D)$ with weak (row-wise) divergence in $\VEC{L}^2(D)$.

Denote by $\VEC{H}_D^1(\Omega)$ the subspace of $\VEC{H}^1(\Omega)$ incorporating the Dirichlet boundary condition on $\Gamma_D$, and by $\VEC{K}$ its subset spanned by admissible displacements, i.e.,
\begin{equation*}
    \VEC{H}_D^1(\Omega) \coloneqq \left\{\VEC{v}\in \VEC{H}^1(\Omega)\ :\ \VEC{v}=\VEC{0} \ \text{on}\ \Gamma_D\right\},
    \qquad
    \VEC{K} \coloneqq \left\{\VEC{v}\in \VEC{H}^1_D(\Omega)\ :\ v^n\leq 0 \ \text{on}\ \Gamma_C\right\}.
\end{equation*}
The weak formulation of problem \eqref{StrongFormulation} corresponds to the following variational inequality (see, e.g., \cite{Haslinger1996}):
Find $\VEC{u}\in\VEC{K}$ such that
\begin{equation}\label{eq:weak}
    a(\VEC{u},\VEC{v}-\VEC{u})\geq L(\VEC{v}-\VEC{u}) \qquad \forall\VEC{v}\in\VEC{K}, 
\end{equation}
where the bilinear form $a\colon \VEC{H}^1(\Omega)\times\VEC{H}^1(\Omega) \to\mathbb{R}$ and the linear form $L\colon \VEC{H}^1(\Omega) \to\mathbb{R}$ are defined as follows:
For all $(\VEC{u},\VEC{v})\in\VEC{H}^1(\Omega)\times\VEC{H}^1(\Omega)$,
\begin{equation}\label{eq:definition a and L}
    a(\VEC{u},\VEC{v}) \coloneqq (\VEC{\sigma}(\VEC{u}),\VEC{\varepsilon}(\VEC{v})),
    \qquad
    L(\VEC{v})\coloneqq(\VEC{f},\VEC{v}) + (\VEC{g}_N,\VEC{v})_{\Gamma_N}.
\end{equation}
Problem \eqref{eq:weak} admits a unique solution by the Stampacchia theorem.

\subsection{Discretization}\label{sec:Nitsche-based method}

Let $\{\mathcal{T}_h\}_h$ be a family of conforming triangulations of $\Omega$, indexed by the mesh size $h\coloneqq\max_{T\in\mathcal{T}_h}h_T$, where $h_T$ is the diameter of the element $T$.
This family is assumed to be regular in the classical sense; see, e.g., \cite[Eq. (3.1.43)]{Ciarlet:02}.
Furthermore, each triangulation is conformal to the subdivision of the boundary into $\Gamma_D$, $\Gamma_N$, and $\Gamma_C$ in the sense that the interior of a boundary edge (if $d=2$) or face (if $d=3$) cannot have non-empty intersection with more than one part of the subdivision.
Mesh-related notations that will be used in the a posteriori error analysis are collected in Table \ref{tab - mesh notation}.
For the sake of simplicity, from this point on we adopt the three-dimensional terminology and speak of faces instead of edges also in dimension $d=2$.

\begin{table}[ht]
  \centering
  \begin{tabular}{p{0.25\textwidth}p{0.70\textwidth}}
    \toprule
    \multicolumn{1}{c}{\textbf{Notation}} & \multicolumn{1}{c}{\textbf{Definition}} \\
    \midrule
    $\mathcal{F}_h$ & Set of faces of $\mathcal{T}_h$ \\
    $\mathcal{F}_h^b$ & Set of boundary faces, i.e.,  $\{F\in\mathcal{F}_h\ :\ F\subset \partial\Omega\}$ \\
    $\mathcal{F}_h^{D}\cup\mathcal{F}_h^N\cup\mathcal{F}_h^C$ & Partition of $\mathcal{F}_h^b$ induced by the boundary and contact conditions \\
    $\mathcal{F}_h^i$ & Set of interior faces, i.e., $\mathcal{F}_h\setminus \mathcal{F}_h^b$ \\
    $\mathcal{F}_T$ & Set of faces of the element $T\in\mathcal{T}_h$, i.e., $\{F\in\mathcal{F}_h\ :\ F\subset\partial T\}$ \\
    $\mathcal{F}_T^\bullet$, $\bullet\in\{b, D, N, C\}$ & $\mathcal{F}_T\cap\mathcal{F}_h^\bullet$, $T\in\mathcal{T}_h$ \\
    $\mathcal{V}_h$ & Set of all the vertices of $\mathcal{T}_h$ \\
    $\mathcal{V}_h^b$ & Set of boundary vertices, i.e., $\{\VEC{a}\in\mathcal{V}_h\ :\ \VEC{a}\in\partial\Omega\}$ \\
    $\mathcal{V}_h^i$ & Set of interior vertices, i.e., $\mathcal{V}_h\setminus\mathcal{V}_h^b$ \\
    $\mathcal{V}_T$ & Set of vertices of the element $T\in\mathcal{T}_h$, i.e., $\{\VEC{a}\in\mathcal{V}_h\ :\ \VEC{a}\in\partial T\}$ \\
    $\mathcal{V}_F$ & Set of vertices of the mesh face $F\in\mathcal{F}_h$, i.e., $\{\VEC{a}\in\mathcal{V}_h\ :\ \VEC{a}\in\partial F\}$ \\
    $\omega_{\VEC{a}}$ & Union of the elements sharing the vertex $\VEC{a}\in\mathcal{V}_h$, i.e., $\displaystyle\bigcup_{T\in\mathcal{T}_h,\,\VEC{a}\in \partial T} T$ \\
    \bottomrule
  \end{tabular}
  \caption{Mesh-related notations.}
  \label{tab - mesh notation}
\end{table}

For any $X\in\mathcal{T}_h\cup\mathcal{F}_h$ mesh element or face, $\mathcal{P}^n(X)$ denotes the restriction to $X$ of $d$-variate polynomials of total degree $\le n$, and we set $\VEC{\mathcal{P}}^n(X)\coloneqq[\mathcal{P}^n(X)]^d$ and $\MAT{P}^n(X) \coloneqq [\mathcal{P}^n(X)]^{d\times d}$.
We seek the displacement in the standard Lagrange finite element space of degree $p\ge 1$ with strongly enforced boundary condition on $\Gamma_D$:
\[
\VEC{V}_h \coloneqq \left\{ \VEC{v}_h\in \VEC{H}_D^1(\Omega)\ :\ \VEC{v}_h|_T\in \VEC{\mathcal{P}}^p(T) \ \text{for any}\ T\in\mathcal{T}_h \right\}.
\]
Denote by $[\, \cdot\, ]_{\mathbb{R}^-}\colon\mathbb{R}\to\mathbb{R}^-$ the projection on the half-line of negative real numbers $\mathbb{R}^-$, i.e., $[x]_{\mathbb{R}^-} \coloneqq \frac12(x-\lvert x\rvert)$ for all $x\in\mathbb{R}$. For every real number $\theta$ and every positive bounded function $\gamma\colon \Gamma_C\to\mathbb{R}^+$, we define the following linear operator \cite{Chouly-Mlika2017}:
\begin{equation}\label{eq:definition P_theta,gamma}
    \begin{split}
      P_{\theta,\gamma}^n\colon \VEC{W} &\to L^2(\Gamma_C) \\
        \VEC{v} & \mapsto \theta\sigma^n(\VEC{v})-\gamma v^n,
    \end{split}
\end{equation}
where $\VEC{W}\coloneqq\left\{\VEC{v}\in\VEC{H}^1(\Omega)\ :\ \VEC{\sigma}(\VEC{v})\VEC{n}|_{\Gamma_C}\in\VEC{L}^2(\Gamma_C)\right\}$ (notice that $\VEC{V}_h \subset \VEC{W}$).
Assuming that $\VEC{u}\in\VEC{W}$, the first contact condition \eqref{eq:unilateral contact 1} can be written as (see \cite{Curnier1988, Chouly2013}):
\begin{equation}\label{eq:normal contact condition rewritten}
    \sigma^n(\VEC{u}) = \left[\sigma^n(\VEC{u})-\gamma u^n\right]_{\mathbb{R}^-} = \left[P_{1,\gamma}^n(\VEC{u})\right]_{\mathbb{R}^-}.
\end{equation}
\begin{remark}[Case $\theta = 0$]\label{remark Pthetagamma}
The linear operator $P^n_{\theta,\gamma}$ is well defined on $\VEC{V}_h$ since it is a subspace of the space of broken polynomials.
It can be easily extended to the space $\VEC{H}^1_D(\Omega)$ in the case $\theta=0$, for which $P^n_{0,\gamma}(\VEC{v})=-\gamma v^n$, as $\VEC{v}\in \VEC{H}^1(\Omega)$ guarantees $\VEC{v}|_{\Gamma_C} \in \VEC{L}^2(\Gamma_C)$ by the trace theorem.
\end{remark}
From now on, $\gamma_0 >0$ will denote a fixed constant called \emph{Nitsche parameter}, and we suppose that $\gamma$ is the positive piecewise constant function on $\Gamma_C$ which satisfies:
For all $T\in\mathcal{T}_h$ such that $\lvert \partial T\cap\Gamma_C\rvert >0$,
\[
  \gamma|_{\partial T\cap \Gamma_C} = \frac{\gamma_0}{h_T}.
\]
We consider the following method à la Nitsche to approximate problem \eqref{StrongFormulation}, originally introduced in \cite{Chouly2013}:
Find $\VEC{u}_h\in\VEC{V}_h$ such that
\begin{equation}\label{Nitsche-based_method theta=0}
  a(\VEC{u}_h,\VEC{v}_h)
  - \left(\left[P_{1,\gamma}^n(\VEC{u}_h)\right]_{\mathbb{R}^-}, v_h^n \right)_{\Gamma_C} 
  = L(\VEC{v}_h) \qquad \forall \VEC{v}_h\in\VEC{V}_h.
\end{equation}
For the a priori analysis of the method, we refer to \cite{Chouly2013}.


\section{Basic a posteriori error estimate}\label{sec:A posteriori analysis}

In this section we derive a basic a posteriori error estimate based on the notion of equilibrated stress reconstruction.

\subsection{Error measure}

In the framework of a posteriori error estimation, the dual norm of a residual functional can be used as a measure of the error between the exact solution $\VEC{u}$ of the problem and the solution $\VEC{u}_h$ obtained with the finite element method.
Denoting by $(\VEC{H}^1_D(\Omega))^*$ the dual space of $\VEC{H}^1_D(\Omega)$, for any $\VEC{w}_h\in\VEC{V}_h$ the \emph{residual} $\mathcal{R}(\VEC{w}_h)\in (\VEC{H}^1_D(\Omega))^*$ is defined by
\begin{equation}\label{residual definition2}
    \begin{split}
        \left\langle \mathcal{R}(\VEC{w}_h),\VEC{v}\right\rangle &\coloneqq 
        L(\VEC{v})-a(\VEC{w}_h,\VEC{v})+\left(\left[P_{1,\gamma}^n(\VEC{w}_h)\right]_{\mathbb{R}^-},v^n\right)_{\Gamma_C} \qquad \forall \VEC{v}\in \VEC{H}_D^1(\Omega),
    \end{split}
\end{equation}
where $\langle\,\cdot\,,\,\cdot\,\rangle$ denotes the duality pairing between $\VEC{H}^1_D(\Omega)$ and $(\VEC{H}^1_D(\Omega))^*$.
We equip $\VEC{H}_D^1(\Omega)$ with the following mesh-dependent norm:
\begin{equation}\label{eq:triple norm for dual norm}
    \norm{\VEC{v}}^2 \coloneqq \lVert \VEC{\nabla}\VEC{v}\rVert^2 + \normGamma{\VEC{v}}^2 \qquad \forall \VEC{v}\in \VEC{H}^1_D(\Omega),
\end{equation}
where
\begin{equation}\label{normGamma}
    \normGamma{\VEC{v}}^2 \coloneqq \sum_{F\in \mathcal{F}_h^C} \frac{1}{h_F} \lVert\VEC{v}\rVert_F^2  \qquad \forall \VEC{v}\in \VEC{H}^1_D(\Omega).
\end{equation}
It is easy to show that $\normGamma{\,\cdot\,}$ is subadditive and absolutely homogeneous, i.e., it is a seminorm. As a consequence, also $\norm{\,\cdot\,}$ is subadditive and absolutely homogeneous. Moreover, if we suppose $\norm{\VEC{v}} = 0$, then both $\lVert\VEC{\nabla}\VEC{v}\rVert$ and $\normGamma{\VEC{v}}$ are zero, and this implies $\VEC{v} = \VEC{0}$ by the Friedrichs inequality in $\VEC{H}_D^1(\Omega)$, showing that $\norm{\,\cdot\,}$ is indeed a norm on $\VEC{H}^1_D(\Omega)$.

The dual norm of the residual of a function $\VEC{w}_h\in\VEC{V}_h$ on the normed space $(\VEC{H}_D^1(\Omega), \norm{\,\cdot\,})$ is given by
\begin{equation}\label{eq - definition dual norm}
  \dualnormresidual{\VEC{w}_h}
  \coloneqq \sup_{\substack{\VEC{v}\in \VEC{H}_D^1(\Omega),\, \norm{\VEC{v}} = 1}}
  \left\langle \mathcal{R}(\VEC{w}_h),\VEC{v}\right\rangle.
\end{equation}
In what follows, the quantity $\dualnormresidual{\VEC{u}_h}$ will be used as a measure of the error committed approximating the exact solution $\VEC{u}$ with $\VEC{u}_h$. 

\subsection{A posteriori error estimate}\label{subsection_a posteriori error estimation for $u_h$}

We start this section by introducing the concept of equilibrated stress reconstruction and the definition of five error estimators.

\begin{definition}[Equilibrated stress reconstruction]\label{definition - equilibrated stress reconstruction}
  We will call \emph{equilibrated stress reconstruction} any second-order tensor $\VEC{\sigma}_h$ such that:
  \begin{enumerate}
  \item $\VEC{\sigma}_h\in \mathbb{H}(\textbf{div},\Omega)$,
  \item $\left(\VEC{\rm div}\, 
  \VEC{\sigma}_h+\VEC{f},\VEC{v}\right)_T=0$ for every $\VEC{v}\in \VEC{\mathcal{P}}^0(T)$ and every $T\in\mathcal{T}_h$,
  \item $(\VEC{\sigma}_h\VEC{n})|_F\in \VEC{L}^2(F)$ for every $F\in \mathcal{F}_h^N \cup \mathcal{F}_h^C$, and $\left(\VEC{\sigma}_h\VEC{n},\VEC{v}\right)_F = \left(\VEC{g}_N,\VEC{v}\right)_F$ for every $\VEC{v}\in \VEC{\mathcal{P}}^0(F)$ and  every $F\in \mathcal{F}_h^N$,
  \item $\VEC{\sigma}_h^{\VEC{t}}=\VEC{0}$ on $\Gamma_C$.
  \end{enumerate}
\end{definition}

Given an equilibrated stress reconstruction $\VEC{\sigma}_h$, for every element $T\in \mathcal{T}_h$, we define the following local error estimators:
\[
\begin{aligned}
  \eta_{\text{osc},T} &\coloneqq \frac{h_T}{\pi} \lVert \VEC{f}+\VEC{\rm div}\, 
  \VEC{\sigma}_h \rVert_T,
  &\qquad&\text{(oscillation)}
  \\
  \eta_{\text{str},T} &\coloneqq \lVert \VEC{\sigma}_h-\VEC{\sigma}(\VEC{u}_h) \rVert_T,
  &\qquad&\text{(stress)}
  \\
  \eta_{\text{Neu},T} &\coloneqq \sum_{F\in \mathcal{F}_T^N} C_{t,T,F}  h_F^{\nicefrac{1}{2}} \lVert \VEC{g}_N-\VEC{\sigma}_h \VEC{n} \rVert_F,
  &\qquad&\text{(Neumann)}
  \\
  \eta_{\text{cnt},T} &\coloneqq \sum_{F\in \mathcal{F}_T^C} h_F^{\nicefrac{1}{2}} \left\lVert \left[P_{1,\gamma}^n(\VEC{u}_h)\right]_{\mathbb{R}^-}-\sigma^n_h \right\rVert_F.
  &\qquad&\text{(contact)}
\end{aligned}
\]
Here, $C_{t,T,F}$ is the constant of the trace inequality $\lVert\VEC{v}-\overline{\VEC{v}}_F\rVert_F \leq C_{t,T,F} h_F^{\nicefrac{1}{2}} \lVert\VEC{\nabla v}\rVert_T$ with $\overline{\VEC{v}}_F\coloneq\frac{1}{|F|}\int_F\VEC{v}$ valid for every $\VEC{v}\in H^1(T)$ and $F\in\mathcal{F}_T$; see   \cite[Theorem 4.6.3]{Vohralik2015} or \cite[Section 1.4]{Di-Pietro.Ern:12}.

The estimator $\eta_{\text{osc},T}$ represents the residual of the force balance equation \eqref{eq:unilateral equilibrium} inside the element $T$, $\eta_{\text{str},T}$ the difference between the Cauchy stress tensor computed from the approximate solution and the equilibrated stress reconstruction, $\eta_{\text{Neu},T}$ the residual of the Neumann boundary condition \eqref{eq:unilateral Neumann}, and $\eta_{\text{cnt},T}$ 
the residual of the normal condition \eqref{eq:unilateral contact 1} on the contact boundary.

\begin{theorem}[A posteriori error estimate for the dual norm of the residual]\label{theorem - a posteriori error estimation for $u_h$}
  Let $\VEC{u}_h$ be the solution of \eqref{Nitsche-based_method theta=0}, $\mathcal{R}(\VEC{u}_h)$ the residual defined by \eqref{residual definition2}, and $\VEC{\sigma}_h$ an equilibrated stress reconstruction in the sense of Definition \ref{definition - equilibrated stress reconstruction}. Then,
  \begin{equation*}
    \dualnormresidual{\VEC{u}_h}
    \leq \Biggl(\sum_{T\in\mathcal{T}_h} \Bigl((\eta_{\emph{osc},T}+\eta_{\emph{str},T}+\eta_{\emph{Neu},T})^2 + (\eta_{\emph{cnt},T}
    )^2 \Bigr) \Biggr)^{\nicefrac{1}{2}}.
  \end{equation*}
\end{theorem}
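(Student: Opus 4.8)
The plan is to follow the equilibrated-flux strategy, adapted so as to accommodate the Nitsche contact term. Fix an arbitrary $\VEC{v}\in\VEC{H}^1_D(\Omega)$ with $\norm{\VEC{v}}=1$ and rewrite $\langle\mathcal{R}(\VEC{u}_h),\VEC{v}\rangle$ from \eqref{residual definition2} by inserting the equilibrated stress reconstruction $\VEC{\sigma}_h$. Since $\VEC{\sigma}(\VEC{u}_h)$ is symmetric, $a(\VEC{u}_h,\VEC{v})=(\VEC{\sigma}(\VEC{u}_h),\VEC{\varepsilon}(\VEC{v}))=(\VEC{\sigma}(\VEC{u}_h),\VEC{\nabla v})$, so I can write $-a(\VEC{u}_h,\VEC{v})=(\VEC{\sigma}_h-\VEC{\sigma}(\VEC{u}_h),\VEC{\nabla v})-(\VEC{\sigma}_h,\VEC{\nabla v})$. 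Applying to the last term the integration-by-parts formula valid for fields in $\MAT{H}(\textbf{div},\Omega)$ (legitimate by item~1 of Definition~\ref{definition - equilibrated stress reconstruction}), and using that $\VEC{v}$ vanishes on $\Gamma_D$, that $\VEC{\sigma}_h\VEC{n}\in\VEC{L}^2$ on $\Gamma_N\cup\Gamma_C$ (item~3), and that $\VEC{\sigma}_h^{\VEC{t}}=\VEC{0}$ on $\Gamma_C$ (item~4, which turns $(\VEC{\sigma}_h\VEC{n},\VEC{v})_{\Gamma_C}$ into $(\sigma_h^n,v^n)_{\Gamma_C}$, matching the structure of the Nitsche contact term), one obtains the identity
\begin{align*}
\langle\mathcal{R}(\VEC{u}_h),\VEC{v}\rangle ={}& \bigl(\VEC{f}+\VEC{\rm div}\,\VEC{\sigma}_h,\VEC{v}\bigr) + \bigl(\VEC{\sigma}_h-\VEC{\sigma}(\VEC{u}_h),\VEC{\nabla v}\bigr) \\
& + \sum_{F\in\mathcal{F}_h^N}\bigl(\VEC{g}_N-\VEC{\sigma}_h\VEC{n},\VEC{v}\bigr)_F + \sum_{F\in\mathcal{F}_h^C}\Bigl(\bigl[P_{1,\gamma}^n(\VEC{u}_h)\bigr]_{\mathbb{R}^-}-\sigma_h^n,v^n\Bigr)_F.
\end{align*}

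Next I would bound the four groups of terms separately. For the volume residual, item~2 allows subtracting on each $T$ the mean $\overline{\VEC{v}}_T$ (the $\VEC{L}^2(T)$-projection onto constants); Cauchy--Schwarz and the Poincar\'e--Wirtinger inequality $\lVert\VEC{v}-\overline{\VEC{v}}_T\rVert_T\le\frac{h_T}{\pi}\lVert\VEC{\nabla v}\rVert_T$ on the convex simplex $T$ give $(\VEC{f}+\VEC{\rm div}\,\VEC{\sigma}_h,\VEC{v})_T\le\eta_{\text{osc},T}\lVert\VEC{\nabla v}\rVert_T$. The stress term is bounded element by element by $\eta_{\text{str},T}\lVert\VEC{\nabla v}\rVert_T$ via Cauchy--Schwarz for the Frobenius inner product. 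For the Neumann term, each $F\in\mathcal{F}_h^N$ belongs to a unique $T$; item~3 lets me subtract $\overline{\VEC{v}}_F$, and the stated trace inequality with constant $C_{t,T,F}$ yields $\sum_{F\in\mathcal{F}_T^N}(\VEC{g}_N-\VEC{\sigma}_h\VEC{n},\VEC{v})_F\le\eta_{\text{Neu},T}\lVert\VEC{\nabla v}\rVert_T$. For the contact term I would subtract no mean at all: using $\lVert v^n\rVert_F\le\lVert\VEC{v}\rVert_F$, writing the face pairing as $(h_F^{\nicefrac12}\lVert\,\cdot\,\rVert_F)(h_F^{-\nicefrac12}\lVert\VEC{v}\rVert_F)$, applying a discrete Cauchy--Schwarz over faces, and invoking the definition \eqref{normGamma} of $\normGamma{\,\cdot\,}$, I get $\sum_{F\in\mathcal{F}_h^C}(\cdots)_F\le\bigl(\sum_{T\in\mathcal{T}_h}\eta_{\text{cnt},T}^2\bigr)^{\nicefrac12}\normGamma{\VEC{v}}$; this explains why $\eta_{\text{cnt},T}$ pairs with $\normGamma{\VEC{v}}$ rather than with $\lVert\VEC{\nabla v}\rVert$.

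Finally I would assemble the bounds: summing the first three contributions over $T\in\mathcal{T}_h$ and using the discrete Cauchy--Schwarz inequality together with $\sum_T\lVert\VEC{\nabla v}\rVert_T^2=\lVert\VEC{\nabla v}\rVert^2$ produces a bound of the form $\bigl(\sum_T(\eta_{\text{osc},T}+\eta_{\text{str},T}+\eta_{\text{Neu},T})^2\bigr)^{\nicefrac12}\lVert\VEC{\nabla v}\rVert+\bigl(\sum_T\eta_{\text{cnt},T}^2\bigr)^{\nicefrac12}\normGamma{\VEC{v}}$; one last Cauchy--Schwarz in $\mathbb{R}^2$, combined with $\lVert\VEC{\nabla v}\rVert^2+\normGamma{\VEC{v}}^2=\norm{\VEC{v}}^2=1$, gives exactly the right-hand side of the claimed estimate, and since $\VEC{v}$ with $\norm{\VEC{v}}=1$ is arbitrary, taking the supremum in \eqref{eq - definition dual norm} concludes.

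I expect the only genuinely delicate point to be the integration-by-parts step combined with the treatment of the contact boundary: one must make sure the boundary term produced on $\Gamma_C$, namely $(\VEC{\sigma}_h\VEC{n},\VEC{v})_{\Gamma_C}$, reduces to a pairing involving only the normal component $v^n$, so that it can be matched against the Nitsche contribution $([P_{1,\gamma}^n(\VEC{u}_h)]_{\mathbb{R}^-},v^n)_{\Gamma_C}$ of \eqref{residual definition2}; this is precisely where item~4 ($\VEC{\sigma}_h^{\VEC{t}}=\VEC{0}$ on $\Gamma_C$) of the equilibrated reconstruction is used, and care is also needed with the well-posedness of the boundary integrals for $\VEC{v}\in\VEC{H}^1_D(\Omega)$. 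Everything else is routine bookkeeping with the Cauchy--Schwarz, Poincar\'e--Wirtinger, and trace inequalities, together with the equilibrium properties in items~2--3.
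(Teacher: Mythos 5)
Your proposal is correct and follows the paper's proof essentially step for step: the same Green's-formula rewriting enabled by items 1, 3, and 4 of Definition \ref{definition - equilibrated stress reconstruction}, the same four-term decomposition of $\langle\mathcal{R}(\VEC{u}_h),\VEC{v}\rangle$, the same use of item 2 with the Poincar\'e--Wirtinger inequality for the oscillation term, the trace inequality for the Neumann term, and the direct $h_F^{\pm\nicefrac12}$ splitting for the contact term. The only cosmetic difference is that you apply the final discrete Cauchy--Schwarz in two successive passes rather than in one shot over $\ell^2(\mathcal{T}_h;\mathbb{R}^2)$ as the paper does, which yields the identical bound.
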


\begin{proof}
  Thanks to the regularity of $\VEC{\sigma}_h$ and of its normal trace (see Properties 1. and 3. in Definition \ref{definition - equilibrated stress reconstruction}), the following Green formula holds:
  \begin{equation}\label{eq:Green's formula}
    (\VEC{\sigma}_h, \VEC{\nabla v})
    =
    -\left(\VEC{\rm div}\, 
    \VEC{\sigma}_h, \VEC{v}\right) +
    (\VEC{\sigma}_h\VEC{n}, \VEC{v})_{\Gamma_N} + (\sigma_h^n, v^n)_{\Gamma_C} 
    \qquad \forall \VEC{v}\in \VEC{H}_D^1(\Omega),
  \end{equation}
  where we have also used the decomposition \eqref{eq:decomposition.normal.tangential} of the normal stress reconstruction $\VEC{\sigma}_h\VEC{n}$ into normal and tangential components on the contact boundary $\Gamma_C$, and the fact that $\VEC{\sigma}_h^{\VEC{t}}|_{\Gamma_C} = \VEC{0}$ thanks to Property 4. in Definition \ref{definition - equilibrated stress reconstruction}.
  Now, fix $\VEC{v}\in \VEC{H}^1_D(\Omega)$ such that $\norm{\VEC{v}}^2 = \lVert\nabla\VEC{v}\rVert^2 + \normGamma{\VEC{v}}^2 = 1$ and consider the argument of the supremum in the definition \eqref{eq - definition dual norm} of the dual norm of the residual.
  Expanding $L(\cdot)$ and $a(\cdot,\cdot)$ according to their definition \eqref{eq:definition a and L}, adding
   and subtracting the term $(\VEC{\sigma}_h, \VEC{\nabla}\VEC{v})$, using the symmetry of $\VEC{\sigma}(\VEC{u}_h)$, and applying Green's formula \eqref{eq:Green's formula}, we obtain
  \begin{equation*}
    \begin{split}
      \langle \mathcal{R}(\VEC{u}_h), \VEC{v}\rangle
      &= (\VEC{f},\VEC{v}) + (\VEC{g}_N,\VEC{v})_{\Gamma_N} - \left(\VEC{\sigma}(\VEC{u}_h),\VEC{\varepsilon} (\VEC{v})\right) + \left(\left[P_{1,\gamma}^n(\VEC{u}_h)\right]_{\mathbb{R}^-}, v^n\right)_{\Gamma_C}
      \\
      &\quad
      + (\VEC{\sigma}_h, \VEC{\nabla}\VEC{v}) - (\VEC{\sigma}_h, \VEC{\nabla}\VEC{v})
      \\
      &= (\VEC{f} + \VEC{\rm div}\, 
      \VEC{\sigma}_h,\VEC{v})
      + (\VEC{\sigma}_h-\VEC{\sigma}(\VEC{u}_h),\VEC{\nabla}\VEC{v})
      + (\VEC{g}_N-\VEC{\sigma}_h\VEC{n},\VEC{v})_{\Gamma_N} \\
      &\quad
      + \left( \left[P_{1,\gamma}^n(\VEC{u}_h) \right]_{\mathbb{R}^-} - \sigma^n_h,v^n \right)_{\Gamma_C}
      \\
      &\eqcolon\mathfrak{T}_1+\cdots+\mathfrak{T}_4.
    \end{split}
  \end{equation*}
  We estimate each term separately.
  Denoting by $\VEC{\Pi}^0_T$ the $L^2$-orthogonal projection onto $\VEC{\mathcal{P}}^0(T)$, and
  using Property 2. of Definition \ref{definition - equilibrated stress reconstruction} with test function $\VEC{\Pi}^0_T \VEC{v} \in \VEC{\mathcal{P}}^0(T)$, the Cauchy-Schwarz inequality, and the Poincaré inequality $\lVert\VEC{v} - \VEC{\Pi}_T^0\VEC{v}\rVert_T \leq h_T \pi^{-1} \lVert\VEC{\nabla v}\rVert_T$, the first term becomes
  \begin{equation*}
    \begin{aligned}
      \mathfrak{T}_1 &= \sum_{T\in \mathcal{T}_h} (\VEC{f}+\VEC{\rm div}\, 
      \VEC{\sigma}_h,\VEC{v}-\VEC{\Pi}_T^0 \VEC{v})_T \leq \sum_{T\in \mathcal{T}_h} \lVert \VEC{f}+\VEC{\rm div}\, 
      \VEC{\sigma}_h\rVert_T \lVert \VEC{v}-\VEC{\Pi}^0_T \VEC{v} \rVert_T  \\
      &\leq \sum_{T\in\mathcal{T}_h} \frac{h_T}{\pi} \lVert \VEC{f}+\VEC{\rm div}\, 
      \VEC{\sigma}_h\rVert_T \lVert \VEC{\nabla v}\rVert_T = \sum_{T\in \mathcal{T}_h} \eta_{\text{osc},T} \lVert \VEC{\nabla v}\rVert_T.
    \end{aligned}
  \end{equation*}
  For the second term, we simply use the Cauchy-Schwarz inequality:
  \begin{equation*}
    \begin{split}
      \mathfrak{T}_2 \leq \sum_{T\in\mathcal{T}_h} \lVert \VEC{\sigma}_h-\VEC{\sigma}(\VEC{u}_h)\rVert_T \lVert \VEC{\nabla v}\rVert_T = \sum_{T\in\mathcal{T}_h} \eta_{\text{str},T} \lVert \VEC{\nabla v}\rVert_T.
    \end{split}
  \end{equation*}
  Denoting by $\VEC{\Pi}^0_F$ the $L^2$-orthogonal projection onto $\VEC{\mathcal{P}}^0(F)$, and
  using Property 3. of Definition \ref{definition - equilibrated stress reconstruction} with  $\VEC{\Pi}^0_F \VEC{v} \in \VEC{\mathcal{P}}^0(F)$ as a test function, the Cauchy-Schwarz inequality, and the trace inequality $\lVert\VEC{v} - \VEC{\Pi}_F^0\VEC{v}\rVert_F \leq C_{t,T,F} h_F^{\nicefrac{1}{2}} \lVert \VEC{\nabla v}\rVert_T$, $F\subset \partial T$, we have
  \begin{equation*}
    \begin{aligned}
      \mathfrak{T}_3 & = \sum_{T\in\mathcal{T}_h} \sum_{F\in \mathcal{F}_T^N} (\VEC{g}_N-\VEC{\sigma}_h\VEC{n},\VEC{v}-\VEC{\Pi}_F^0 \VEC{v})_F \leq \sum_{T\in\mathcal{T}_h} \sum_{F\in \mathcal{F}_T^N} \lVert \VEC{g}_N-\VEC{\sigma}_h\VEC{n}\rVert_F \lVert\VEC{v}-\VEC{\Pi}^0_F \VEC{v}\rVert_F  \\
      &\leq \sum_{T\in\mathcal{T}_h} \sum_{F\in \mathcal{F}_T^N} C_{t,T,F} h_F^{\nicefrac{1}{2}} \lVert \VEC{g}_N-\VEC{\sigma}_h\VEC{n}\rVert_F \lVert\VEC{\nabla v}\rVert_T = \sum_{T\in\mathcal{T}_h} \eta_{\text{Neu},T} \lVert\VEC{\nabla v}\rVert_T,
    \end{aligned}
  \end{equation*}
  where we recall that, for any $T\in\mathcal{T}_h$, $\mathcal{F}_T^N$ collects the Neumann faces of $T$ contained in $\mathcal{F}_h^N$.
  Finally, we consider the term 
  on $\Gamma_C$.
  We define, for all $T\in\mathcal{T}_h$, the local counterpart of the seminorm \eqref{normGamma}
  \begin{equation*}
    \normT{\VEC{v}}^2\coloneqq\sum_{F\in \mathcal{F}_T^C} \frac{1}{h_F} \lVert\VEC{v}\rVert_F^2,
  \end{equation*}
  where $\mathcal{F}_T^C$ is the (possibly empty) set collecting the contact faces of $T$ contained in $\partial T\cap\Gamma_C$.
  Then, using the Cauchy-Schwarz inequality, we obtain
  \begin{equation*}
    \begin{aligned}
      \mathfrak{T}_4 &\leq \sum_{T\in \mathcal{T}_h} \sum_{F\in \mathcal{F}_T^C} \left\lVert \left[P_{1,\gamma}^n(\VEC{u}_h) \right]_{\mathbb{R}^-} - \sigma^n_h\right\rVert_F \lVert v^n\rVert_F  \leq \sum_{T\in\mathcal{T}_h} \sum_{F\in \mathcal{F}_T^C} h_F^{\nicefrac{1}{2}} \left\lVert \left[P_{1,\gamma}^n(\VEC{u}_h) \right]_{\mathbb{R}^-} - \sigma^n_h\right\rVert_F \normT{\VEC{v}}  \\
      &= \sum_{T\in\mathcal{T}_h} \eta_{\text{cnt},T} \normT{\VEC{v}}.
    \end{aligned}
  \end{equation*}
  Let, for the sake of brevity, $\eta_{a,T} \coloneqq \eta_{\text{osc},T}+\eta_{\text{str},T}+\eta_{\text{Neu},T}$ 
  for any element $T\in\mathcal{T}_h$.
  Combining the above results and applying the Cauchy-Schwarz inequality, we obtain
  \begin{equation*}
    \begin{aligned}
      \dualnormresidual{\VEC{u}_h}
      &\leq \sup_{\substack{\VEC{v}\in \VEC{H}^1_D(\Omega),\, \norm{\VEC{v}} = 1}} \Biggl\{\sum_{T\in\mathcal{T}_h} \Bigl( \eta_{a,T} \lVert \VEC{\nabla}\VEC{v}\rVert_T + 
      \eta_{\rm cnt, T}
      \normT{\VEC{v}}\Bigr)\Biggr\}  \\
      &\leq \sup_{\substack{\VEC{v}\in \VEC{H}^1_D(\Omega),\, \norm{\VEC{v}} = 1}} \Biggl\{\Biggl(\sum_{T\in\mathcal{T}_h}\left((\eta_{a,T})^2 + (
      \eta_{\rm cnt,T})^2
      \right)\Biggr)^{\nicefrac{1}{2}} \Biggl(\sum_{T\in\mathcal{T}_h} \left(\lVert \VEC{\nabla}\VEC{v}\rVert_T^2 + \normT{\VEC{v}}^2 \right) \Biggr)^{\nicefrac{1}{2}} \Biggr\}  \\
      &= \Biggl(\sum_{T\in\mathcal{T}_h}\Bigl((\eta_{\text{osc},T}+\eta_{\text{str},T}+\eta_{\text{Neu},T})^2 + (\eta_{\text{cnt},T}
      )^2
      \Bigr) \Biggr)^{\nicefrac{1}{2}}.\qedhere
    \end{aligned}
  \end{equation*}
\end{proof}

\begin{remark}[A posteriori error estimate for stress reconstructions with contact friction]
	Even without Property 4. in Definition \ref{definition - equilibrated stress reconstruction} one can easily obtain an a posteriori error estimate similarly to Theorem \ref{theorem - a posteriori error estimation for $u_h$}:
	introducing a fifth local estimator
	\begin{equation*}
		\eta_{\text{fric},T} \coloneqq \sum_{F\in \mathcal{F}_T^C} h_F^{\nicefrac{1}{2}} \lVert \VEC{\sigma}^{\VEC{t}}_h\rVert_F,
		\qquad\text{(friction)}
	\end{equation*}
	which represents the residual of the tangential condition \eqref{eq:unilateral contact 2} on the contact boundary, one gets
	\begin{equation*}
	\dualnormresidual{\VEC{u}_h}
	\leq \Biggl(\sum_{T\in\mathcal{T}_h} \Bigl((\eta_{\emph{osc},T}+\eta_{\emph{str},T}+\eta_{\emph{Neu},T})^2 + (\eta_{\emph{cnt},T}+\eta_{\emph{fric},T}
	)^2 \Bigr) \Biggr)^{\nicefrac{1}{2}}.
	\end{equation*}
\end{remark}

\subsection{Comparison between the residual dual norm and the energy norm}\label{sec:comparison norms}

The goal of this section is to compare the dual norm $\dualnormresidual{\VEC{u}_h}$ with the energy norm $\energynorm{\VEC{u}-\VEC{u}_h}$ of the error, where
\begin{equation}\label{eq - energy norm}
\energynorm{\VEC{v}}^2 \coloneqq a(\VEC{v},\VEC{v}) = \left(\VEC{\sigma}(\VEC{v}), \VEC{\varepsilon}(\VEC{v})\right) \qquad \forall \VEC{v}\in \VEC{H}^1_D(\Omega).
\end{equation}

\begin{remark}[Coercivity of the bilinear form $a$]
  The bilinear form $a(\,\cdot\,,\,\cdot\,)$ on the space $(\VEC{H}^1_D(\Omega),\normHuno{\,\cdot\,})$ is elliptic with a constant $\alpha$ which depends on the Lamé parameter $\mu$ and on the Korn constant $C_K$:
  \begin{equation}\label{eq:ellipticity a}
    \alpha\normHuno{\VEC{v}}^2 \leq a(\VEC{v},\VEC{v}) = \energynorm{\VEC{v}}^2 \qquad \forall\VEC{v}\in \VEC{H}^1_D(\Omega).
  \end{equation}
\end{remark}

Throughout the rest of this section, we adopt the following shorthand notation:
  For every $a,b\in\mathbb{R}$, we write $a\lesssim b$ for $a \leq C b$ with $C>0$ independent of the mesh size $h$ and of the Nitsche parameter $\gamma_0$.

\begin{theorem}[Control of the energy norm]\label{theorem - upper bound of the energy norm}
  Assume that the solution $\VEC{u}$ of the continuous problem \eqref{StrongFormulation} belongs to $\VEC{H}^{\frac{3}{2}+\nu}(\Omega)$ for some $\nu >0$, and let $\VEC{u}_h\in\VEC{V}_h$ be the solution of the discrete problem \eqref{Nitsche-based_method theta=0}.  
  Then,
  \begin{equation}\label{thesis upper bound without saturation assumption}
    \alpha^{\nicefrac{1}{2}} \energynorm{\VEC{u}-\VEC{u}_h}
    \lesssim
    \dualnormresidual{\VEC{u}_h}
    + \sum_{F\in \mathcal{F}_h^C} \frac{1}{h_F^{\nicefrac{1}{2}}} \left\lVert \sigma^n(\VEC{u})-\left[P_{1,\gamma}^n(\VEC{u}_h)\right]_{\mathbb{R}^-} \right\rVert_{F}.
  \end{equation}
  Furthermore, if the saturation assumption (see \cite{Chouly-Mlika2017, Chouly-Fabre2017})
  \begin{equation}\label{saturation inequality}
    \left\lVert{\left(\frac{\gamma_0}{\gamma}\right)^{\nicefrac{1}{2}} 		\sigma^n(\VEC{u}-\VEC{u}_h)} \right\rVert_{\Gamma_C} \lesssim 					\normHuno{\VEC{u}-\VEC{u}_h}
  \end{equation}
  holds and $\gamma_0$ is sufficiently large, then
  \begin{equation}\label{thesis upper bound}
    \energynorm{\VEC{u}-\VEC{u}_h} \lesssim
    \dualnormresidual{\VEC{u}_h}.
  \end{equation}
\end{theorem}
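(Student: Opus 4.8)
The plan is to combine the guaranteed upper bound of Theorem \ref{theorem - a posteriori error estimation for $u_h$} with coercivity \eqref{eq:ellipticity a}, so the essential work is to show that $\energynorm{\VEC{u}-\VEC{u}_h}$ is itself controlled (up to the boundary term) by the dual norm $\dualnormresidual{\VEC{u}_h}$. First I would write $\energynorm{\VEC{u}-\VEC{u}_h}^2 = a(\VEC{u}-\VEC{u}_h,\VEC{u}-\VEC{u}_h)$ and try to express this as the action of the residual $\mathcal{R}(\VEC{u}_h)$ on the error $\VEC{v}\coloneqq\VEC{u}-\VEC{u}_h\in\VEC{H}^1_D(\Omega)$. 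Using the definition \eqref{residual definition2} of the residual and the fact that $\VEC{u}$ solves the continuous problem, one gets
\begin{equation*}
  a(\VEC{u}-\VEC{u}_h,\VEC{v}) = \langle\mathcal{R}(\VEC{u}_h),\VEC{v}\rangle + \left(\sigma^n(\VEC{u}) - \left[P_{1,\gamma}^n(\VEC{u}_h)\right]_{\mathbb{R}^-},\, v^n\right)_{\Gamma_C},
\end{equation*}
where the boundary term arises because $a(\VEC{u},\VEC{v}) = L(\VEC{v}) + (\sigma^n(\VEC{u}),v^n)_{\Gamma_C}$ by Green's formula (here one uses $\VEC{u}\in\VEC{H}^{\frac32+\nu}(\Omega)$, hence $\VEC{u}\in\VEC{W}$, so the normal trace $\sigma^n(\VEC{u})$ is in $L^2(\Gamma_C)$ and the tangential condition \eqref{eq:unilateral contact 2} kills the tangential part). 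Bounding the first term by $\dualnormresidual{\VEC{u}_h}\,\norm{\VEC{v}}$ and the second by a Cauchy--Schwarz over contact faces against $\normGamma{\VEC{v}}$ with weights $h_F^{\pm1/2}$ gives
\begin{equation*}
  \energynorm{\VEC{u}-\VEC{u}_h}^2 \lesssim \left(\dualnormresidual{\VEC{u}_h} + \sum_{F\in\mathcal{F}_h^C}\frac{1}{h_F^{\nicefrac12}}\left\lVert\sigma^n(\VEC{u}) - \left[P_{1,\gamma}^n(\VEC{u}_h)\right]_{\mathbb{R}^-}\right\rVert_F\right)\norm{\VEC{u}-\VEC{u}_h}.
\end{equation*}
It then remains to bound $\norm{\VEC{u}-\VEC{u}_h}$ by $\energynorm{\VEC{u}-\VEC{u}_h}$: the $\lVert\VEC{\nabla}(\VEC{u}-\VEC{u}_h)\rVert$ part follows from \eqref{eq:ellipticity a} (up to $\alpha^{-1/2}$), while the contact seminorm part $\normGamma{\VEC{u}-\VEC{u}_h}^2 = \sum_F h_F^{-1}\lVert\VEC{u}-\VEC{u}_h\rVert_F^2$ is handled by a scaled trace inequality $h_F^{-1}\lVert\VEC{w}\rVert_F^2 \lesssim h_T^{-2}\lVert\VEC{w}\rVert_T^2 + \lVert\VEC{\nabla w}\rVert_T^2$ combined with a Poincaré--Friedrichs-type argument (using $\lvert\Gamma_D\rvert>0$) to absorb the $L^2$ term, yielding $\norm{\VEC{u}-\VEC{u}_h} \lesssim \normHuno{\VEC{u}-\VEC{u}_h} \lesssim \alpha^{-1/2}\energynorm{\VEC{u}-\VEC{u}_h}$. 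Dividing through by $\energynorm{\VEC{u}-\VEC{u}_h}$ (trivial if the error is zero) produces \eqref{thesis upper bound without saturation assumption} after reinstating the factor $\alpha^{1/2}$.

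For the second part, under the saturation assumption \eqref{saturation inequality} the plan is to absorb the boundary term into the left-hand side. Using that $[\cdot]_{\mathbb{R}^-}$ is $1$-Lipschitz and the characterization \eqref{eq:normal contact condition rewritten}, $\sigma^n(\VEC{u}) = \left[P_{1,\gamma}^n(\VEC{u})\right]_{\mathbb{R}^-}$, I would estimate the face-wise difference by
\begin{equation*}
  \left\lVert\sigma^n(\VEC{u}) - \left[P_{1,\gamma}^n(\VEC{u}_h)\right]_{\mathbb{R}^-}\right\rVert_F \leq \left\lVert P_{1,\gamma}^n(\VEC{u}) - P_{1,\gamma}^n(\VEC{u}_h)\right\rVert_F = \left\lVert\sigma^n(\VEC{u}-\VEC{u}_h) - \gamma(u^n - u_h^n)\right\rVert_F,
\end{equation*}
then split via the triangle inequality. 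Multiplying by $h_F^{-1/2}$, summing over $F\in\mathcal{F}_h^C$, recalling $\gamma|_{\partial T\cap\Gamma_C} = \gamma_0/h_T$ so that the $\gamma(u^n-u_h^n)$ contribution becomes $\gamma_0^{1/2}$ times a term controlled by $\normGamma{\VEC{u}-\VEC{u}_h} \lesssim \normHuno{\VEC{u}-\VEC{u}_h}$, and invoking \eqref{saturation inequality} for the $\sigma^n(\VEC{u}-\VEC{u}_h)$ contribution (which carries a $\gamma_0^{-1/2}$-type weight since $h_F^{-1/2}\sim (\gamma/\gamma_0)^{1/2}\gamma_0^{-1/2}\cdot$const after matching the $\gamma_0/\gamma$ factor in the assumption), one bounds the whole boundary term by $C\gamma_0^{-1/2}\normHuno{\VEC{u}-\VEC{u}_h} + (\text{lower order})$, hence by $C\gamma_0^{-1/2}\alpha^{-1/2}\energynorm{\VEC{u}-\VEC{u}_h}$ up to terms already present in $\dualnormresidual{\VEC{u}_h}$. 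For $\gamma_0$ large enough this constant is $<1$, so the term can be hidden in the left-hand side of \eqref{thesis upper bound without saturation assumption}, leaving \eqref{thesis upper bound}.

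The main obstacle I anticipate is the careful bookkeeping of the $\gamma_0$- and $h_F$-powers in the last step: one must verify that after rescaling, the coefficient in front of $\energynorm{\VEC{u}-\VEC{u}_h}$ coming from the boundary term genuinely decays like $\gamma_0^{-1/2}$ (so that it can be made small), rather than being $\gamma_0$-independent or growing; this is exactly where the precise definition $\gamma = \gamma_0/h_T$ and the form of the saturation assumption with its $(\gamma_0/\gamma)^{1/2}$ weight must be used in tandem, and it is the only place where the hypothesis ``$\gamma_0$ sufficiently large'' enters. A secondary technical point is justifying the Green's formula $a(\VEC{u},\VEC{v}) = L(\VEC{v}) + (\sigma^n(\VEC{u}),v^n)_{\Gamma_C}$ rigorously, which is where the extra regularity $\VEC{u}\in\VEC{H}^{\frac32+\nu}(\Omega)$ is needed to make $\VEC{\sigma}(\VEC{u})\VEC{n}|_{\Gamma_C}$ a genuine $\VEC{L}^2(\Gamma_C)$ function; the rest of the argument is standard Poincaré/trace/coercivity estimates with $h$- and $\gamma_0$-tracking.
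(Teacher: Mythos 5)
Your opening identity $a(\VEC{u}-\VEC{u}_h,\VEC{v}) = \langle\mathcal{R}(\VEC{u}_h),\VEC{v}\rangle + \bigl(\sigma^n(\VEC{u}) - [P_{1,\gamma}^n(\VEC{u}_h)]_{\mathbb{R}^-},\, v^n\bigr)_{\Gamma_C}$ with $\VEC{v}=\VEC{u}-\VEC{u}_h$ is correct and is equivalent to the paper's decomposition $\energynorm{\VEC{u}-\VEC{u}_h}^2=\mathfrak{T}_1+\mathfrak{T}_2$. The fatal step comes next: you then bound $\mathfrak{T}_1\le \dualnormresidual{\VEC{u}_h}\norm{\VEC{u}-\VEC{u}_h}$ and claim $\norm{\VEC{u}-\VEC{u}_h}\lesssim\normHuno{\VEC{u}-\VEC{u}_h}$. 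This inequality is false uniformly in $h$. The scaled trace inequality you invoke yields $h_F^{-1}\lVert\VEC{w}\rVert_F^2\lesssim h_T^{-2}\lVert\VEC{w}\rVert_T^2+\lVert\VEC{\nabla w}\rVert_T^2$, and no Friedrichs-type argument can remove the $h_T^{-2}$ factor for a generic $\VEC{w}\in\VEC{H}^1_D(\Omega)$: taking $\VEC{w}$ fixed, smooth, nonzero on $\Gamma_C$ gives $\normGamma{\VEC{w}}^2\sim h^{-1}\to\infty$ while $\normHuno{\VEC{w}}$ stays bounded. Precisely because $\normGamma{\,\cdot\,}$ is not dominated by $\normHuno{\,\cdot\,}$, the paper never bounds $\mathfrak{T}_1$ by $\norm{\VEC{u}-\VEC{u}_h}$. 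Instead it uses that the residual vanishes on $\VEC{V}_h$ (Galerkin orthogonality from \eqref{Nitsche-based_method theta=0}), so $\mathfrak{T}_1=\langle\mathcal{R}(\VEC{u}_h),\VEC{u}-\VEC{v}_h\rangle$ for every $\VEC{v}_h\in\VEC{V}_h$, and chooses $\VEC{v}_h=\VEC{u}_h+\interpol(\VEC{u}-\VEC{u}_h)$ with $\interpol$ a quasi-interpolation operator. The operative estimate is the face approximation property $\lVert \VEC{v}-\interpol\VEC{v}\rVert_F\lesssim h_F^{\nicefrac12}\lVert\VEC{v}\rVert_{1,\tilde\omega_F}$, which supplies the missing factor $h_F^{\nicefrac12}$ so that $\normGamma{\VEC{u}-\VEC{v}_h}\lesssim\normHuno{\VEC{u}-\VEC{u}_h}$. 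Without this mechanism your argument does not close.

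The second part has a related gap. You estimate the boundary term via the Lipschitz bound $\lVert\sigma^n(\VEC{u})-[P_{1,\gamma}^n(\VEC{u}_h)]_{\mathbb{R}^-}\rVert_F\le\lVert P_{1,\gamma}^n(\VEC{u})-P_{1,\gamma}^n(\VEC{u}_h)\rVert_F$ and then the triangle inequality, obtaining a $\gamma(u^n-u_h^n)$ piece. With $\gamma=\gamma_0/h_T$ this piece enters as $h_F^{-\nicefrac12}\gamma\lVert u^n-u_h^n\rVert_F\sim\gamma_0\,h_F^{-\nicefrac32}\lVert u^n-u_h^n\rVert_F$, which is not controlled by $\normGamma{\VEC{u}-\VEC{u}_h}$ (there is an extra $h_F^{-1}$), and even if it were, the prefactor grows with $\gamma_0$, the opposite of what is needed for absorption. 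You also again invoke $\normGamma{\VEC{u}-\VEC{u}_h}\lesssim\normHuno{\VEC{u}-\VEC{u}_h}$. The paper avoids the triangle inequality entirely: starting from $\mathfrak{T}_2=\bigl([P_{1,\gamma}^n(\VEC{u})]_{\mathbb{R}^-}-[P_{1,\gamma}^n(\VEC{u}_h)]_{\mathbb{R}^-},\,u^n-u_h^n\bigr)_{\Gamma_C}$ and writing $u^n-u_h^n=\gamma^{-1}\bigl[-(P_{1,\gamma}^n(\VEC{u})-P_{1,\gamma}^n(\VEC{u}_h))+\sigma^n(\VEC{u}-\VEC{u}_h)\bigr]$, the monotonicity inequality $(a-b)\bigl([a]_{\mathbb{R}^-}-[b]_{\mathbb{R}^-}\bigr)\ge\bigl([a]_{\mathbb{R}^-}-[b]_{\mathbb{R}^-}\bigr)^2$ turns the first contribution into a nonpositive term, which cancels the cross term from Young's inequality and eliminates the $\gamma(u^n-u_h^n)$ part altogether; what remains is $\mathfrak{T}_2\le\tfrac1{4\gamma_0}\lVert(\gamma_0/\gamma)^{\nicefrac12}\sigma^n(\VEC{u}-\VEC{u}_h)\rVert_{\Gamma_C}^2$, which the saturation assumption and coercivity turn into $\tfrac{1}{4\gamma_0\alpha}\energynorm{\VEC{u}-\VEC{u}_h}^2$, absorbable for $\gamma_0$ large. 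Both the quasi-interpolation step in part 1 and the monotonicity-based cancellation in part 2 are essential ideas missing from your proposal.
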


\begin{remark}[Role of the regularity assumption]
  In Theorem \ref{theorem - upper bound of the energy norm}, the solution of the contact problem \eqref{StrongFormulation} $\VEC{u}$ is supposed to be sufficiently regular in order to ensure that the normal component of the Cauchy stress tensor is square-integrable on the contact boundary $\Gamma_C$. As a matter of fact, this regularity assumption implies $\VEC{\sigma}(\VEC{u})\VEC{n} \in \VEC{H}^\nu(\Gamma_C) \subset \VEC{L}^2(\Gamma_C)$.
\end{remark}

\begin{proof}[Proof of Theorem \ref{theorem - upper bound of the energy norm}]
  The proof adapts the ideas of \cite[Theorem 3.5]{Chouly-Fabre2017}.
  \medskip \\
  \noindent\emph{1) Proof of \eqref{thesis upper bound without saturation assumption}.}
  Let $\VEC{v}_h\in\VEC{V}_h$.
  Using the definition \eqref{eq - energy norm} of the energy norm and the bilinearity of $a(\,\cdot\,,\,\cdot\,)$, we can write
  \begin{equation}\label{eq:division energy norm}
    \begin{split}
      \energynorm{\VEC{u}-\VEC{u}_h}^2 &= a(\VEC{u}-\VEC{u}_h,\VEC{u}-\VEC{u}_h) = a(\VEC{u},\VEC{u}-\VEC{u}_h) - a(\VEC{u}_h,\VEC{u}-\VEC{v}_h)- a(\VEC{u}_h, \VEC{v}_h-\VEC{u}_h).
    \end{split}
  \end{equation}

  For the term $a(\VEC{u},\VEC{u}-\VEC{u}_h)$, we first use the definition \eqref{eq:definition a and L} of the bilinear form $a(\,\cdot\,,\,\cdot\,)$ followed by the symmetry of the Cauchy stress tensor $\VEC{\sigma}(\VEC{u})$ to replace $\VEC{\varepsilon}(\VEC{u}-\VEC{u}_h)$ with $\VEC{\nabla}(\VEC{u}-\VEC{u}_h)$, then an integration by parts, and, finally, the fact that $\VEC{u}$ satisfies \eqref{StrongFormulation} a.e. to infer:
  \begin{equation}\label{eq:upper first term a}
    \begin{split}
      a(\VEC{u},\VEC{u}-\VEC{u}_h)
      &= \left(\VEC{\sigma}(\VEC{u}), \VEC{\varepsilon}(\VEC{u}-\VEC{u}_h) \right)
      =\left(\VEC{\sigma}(\VEC{u}), \VEC{\nabla}(\VEC{u}-\VEC{u}_h) \right)
      \\
      &=
      -\left(\VEC{\rm div}\, 
      \VEC{\sigma}(\VEC{u}), \VEC{u}-\VEC{u}_h \right)
      + \left(\VEC{\sigma}(\VEC{u})\VEC{n}, \VEC{u}-\VEC{u}_h\right)_{\partial\Omega}  \\
      &= \left(\VEC{f}, \VEC{u}-\VEC{u}_h \right) + \left(\VEC{g}_N, \VEC{u}-\VEC{u}_h \right)_{\Gamma_N} + \left(\sigma^n(\VEC{u}), u^n-u^n_h \right)_{\Gamma_C}.
    \end{split}
  \end{equation}
  Notice that, in the last term, only the normal component of the traction appears as $\VEC{\sigma}^{\VEC{t}}(\VEC{u})=\VEC{0}$ on $\Gamma_C$ by \eqref{eq:unilateral contact 2}.
  
  Concerning the term $a(\VEC{u}_h,\VEC{v}_h-\VEC{u}_h)$ in \eqref{eq:division energy norm}, since $\VEC{u}_h$ solves \eqref{Nitsche-based_method theta=0} and $\VEC{v}_h-\VEC{u}_h\in\VEC{V}_h$, we have 
  \begin{equation}\label{eq:upper second term a}
    a(\VEC{u}_h,\VEC{v}_h-\VEC{u}_h)
    = \left( \VEC{f}, \VEC{v}_h-\VEC{u}_h \right) + \left( \VEC{g}_N, \VEC{v}_h-\VEC{u}_h \right)_{\Gamma_N} + \left( \left[P_{1,\gamma}^n(\VEC{u}_h)\right]_{\mathbb{R}^-}, v_h^n-u_h^n \right)_{\Gamma_C},
  \end{equation}
  where we have additionally expanded the linear form $L(\,\cdot\,)$ according to its definition \eqref{eq:definition a and L}.
  
  Plugging \eqref{eq:upper first term a} and \eqref{eq:upper second term a} into \eqref{eq:division energy norm}, we then obtain
  \begin{equation}\label{eq - initial result}
    \begin{split}
      \energynorm{\VEC{u}-\VEC{u}_h}^2
      &= 
      \left( \VEC{f}, \VEC{u}-\VEC{v}_h \right) + \left( \VEC{g}_N, \VEC{u}-\VEC{v}_h \right)_{\Gamma_N} + \left( \sigma^n(\VEC{u}), u^n-u^n_h \right)_{\Gamma_C}
      \\
      &\quad
      - \left( \VEC{\sigma}(\VEC{u}_h), \VEC{\varepsilon}(\VEC{u}-\VEC{v}_h)\right) - \left( \left[P_{1,\gamma}^n(\VEC{u}_h)\right]_{\mathbb{R}^-}, v^n_h-u^n_h \right)_{\Gamma_C}
      = \mathfrak{T}_1+\mathfrak{T}_2
    \end{split}
  \end{equation}
  where, recalling the definition \eqref{residual definition2} of the residual,
  \begin{equation*}
    \mathfrak{T}_1
    \coloneq \left\langle \mathcal{R}(\VEC{u}_h), \VEC{u}-\VEC{v}_h\right\rangle,\qquad`
    \mathfrak{T}_2 \coloneqq \left( \sigma^n(\VEC{u})-\left[P_{1,\gamma}^n(\VEC{u}_h)\right]_{\mathbb{R}^-}, u^n-u^n_h \right)_{\Gamma_C} .
  \end{equation*}
  Notice that the reformulation of $\mathfrak{T}_1$ in terms of the residual $\mathcal{R}(\VEC{u}_h)$ is a consequence of \eqref{residual definition2} and of the definition \eqref{eq:definition a and L} of the linear form $L(\,\cdot\,)$ and of the bilinear form $a(\,\cdot\,, \,\cdot\,)$.

  For the first term, we can write, by definition \eqref{eq - definition dual norm} of the dual norm,
  \begin{equation}\label{eq:first estimation P1}
    \begin{split}
      \mathfrak{T}_1 
      &\leq \norm{\VEC{u}-\VEC{v}_h}
      \dualnormresidual{\VEC{u}_h}.
    \end{split}
  \end{equation}   
  We now want to show that $\norm{\VEC{u}-\VEC{v}_h}\lesssim \energynorm{\VEC{u}-\VEC{u}_h}$ for a properly selected function $\VEC{v}_h$. From now on, we fix $\VEC{v}_h=\VEC{u}_h+\interpol(\VEC{u}-\VEC{u}_h)$, where $\interpol\colon \VEC{H}^1_D(\Omega)\to \VEC{V}_h$ is the quasi-interpolation operator defined in \cite[Eq. (4.11)]{Bernardi1998}, whose main properties are summarized in \cite[Lemma 2.1]{Chouly-Fabre2017}. We analyze separately the two parts composing the norm $\norm{\,\cdot\,}$ (see \eqref{eq:triple norm for dual norm}). 
  For the $\VEC{H}^1$-seminorm, we use, in this order, the triangle inequality, the choice of $\VEC{v}_h$, the boundedness in the $\VEC{H}^1$-norm of the operator $\interpol$ (i.e., $\normHuno{\interpol\VEC{v}}\lesssim \normHuno{\VEC{v}}$ for every $\VEC{v}\in \VEC{H}^1_D(\Omega)$), and the ellipticity \eqref{eq:ellipticity a} of the bilinear form $a(\,\cdot\,,\,\cdot\,)$ to write:
  \begin{equation}\label{eq:norm of gradient estimate}
    \begin{aligned}
    \lVert \VEC{\nabla}(\VEC{u}-\VEC{v}_h)\rVert
    &\leq \normHuno{\VEC{u}-\VEC{v}_h}
    \leq \normHuno{\VEC{u}-\VEC{u}_h} + \normHuno{\VEC{u}_h-\VEC{v}_h}  \\
    &= \normHuno{\VEC{u}-\VEC{u}_h} + \normHuno{\interpol(\VEC{u}-\VEC{u_h})}
    \lesssim \normHuno{\VEC{u}-\VEC{u}_h} \leq \alpha^{-\nicefrac{1}{2}} \energynorm{\VEC{u}-\VEC{u}_h}.
    \end{aligned}
  \end{equation}
  Next, using %
  the definition \eqref{normGamma} of the seminorm $\normGamma{\,\cdot\,}$, the choice of $\VEC{v}_h$, and the ellipticity \eqref{eq:ellipticity a} of $a(\,\cdot\,,\,\cdot\,)$, we obtain:
  \begin{equation}\label{Gammanorm1}
    \begin{split}
      \normGamma{\VEC{u}-\VEC{v}_h}^2 &= \sum_{F\in \mathcal{F}_h^C} \frac{1}{h_F} \lVert \VEC{u}-\VEC{v}_h\rVert_F^2 = \sum_{F\in \mathcal{F}_h^C} \frac{1}{h_F} \left\lVert\VEC{u}-\VEC{u}_h-\interpol(\VEC{u}-\VEC{u}_h)\right\rVert_F^2  \\
      &\lesssim \sum_{F\in \mathcal{F}_h^C}  \left\lVert\VEC{u}-\VEC{u}_h\right\rVert_{1,\tilde{\omega}_F}^2 \lesssim \normHuno{\VEC{u}-\VEC{u}_h}^2 \leq \alpha^{-1} \energynorm{\VEC{u}-\VEC{u}_h}^2,
    \end{split}
  \end{equation}
  where, to pass to the second line, we have used the following trace approximation property of $\interpol$ (see \cite[Lemma 2.1]{Chouly-Fabre2017}):
  For all $F\in \mathcal{F}_h$,
  \begin{equation*}
    \left\lVert \VEC{v}-\interpol\VEC{v}\right\rVert_F \lesssim h_F^{\nicefrac{1}{2}} \left\lVert\VEC{v}\right\rVert_{1,\tilde{\omega}_F} \qquad  \forall \VEC{v}\in \VEC{H}^1_D(\Omega),
  \end{equation*}
  with $\tilde{\omega}_F$ standing for the union of the mesh elements sharing at least one vertex with $F$, see Figure \ref{fig:illustration tilde omega F}.
  Recalling the definition \eqref{eq:triple norm for dual norm} of the triple norm, squaring \eqref{eq:norm of gradient estimate} and summing it to \eqref{Gammanorm1}, and taking the square root of the resulting inequality, we conclude that
  \begin{equation*}
    \norm{\VEC{u}-\VEC{v}_h} = \left(\lVert\VEC{\nabla}(\VEC{u}-\VEC{v}_h)\rVert^2 + \normGamma{\VEC{u}-\VEC{v}_h}^2 \right)^{\nicefrac{1}{2}} \lesssim \alpha^{-\nicefrac{1}{2}} \energynorm{\VEC{u}-\VEC{u}_h}.
  \end{equation*}
  Combining this bound with \eqref{eq:first estimation P1}, we obtain
  \begin{equation}\label{estimation P1}
    \mathfrak{T}_1 \lesssim \alpha^{-\nicefrac{1}{2}} \energynorm{\VEC{u}-\VEC{u}_h} \dualnormresidual{\VEC{u}_h}.
  \end{equation}

  \begin{figure}[tb]
    \centering
    \begin{subfigure}{0.45\textwidth}
      \centering
      \begin{tikzpicture}[scale=0.75]
        \fill[fill=gray!20] (0.5,2) -- (3.5,1.5) -- (3.5,-0.5) -- (1,-1.5) -- (-1.5,-2) -- (-3.5,-1) -- (-2,1.5) -- (0.5,2);
        \draw[thick] (0.5,2) -- (3.5,1.5) -- (3.5,-0.5) -- (1,-1.5) -- (-1.5,-2) -- (-3.5,-1) -- (-2,1.5) -- (0.5,2);
        \draw[thick] (-2,1.5) -- (-1.5,0) -- (0.5,2) -- (1.5,0) -- (3.5,1.5);
        \draw[thick] (3.5,-0.5) -- (1.5,0) -- (-1.5,-2) -- (-1.5,0) -- (-3.5,-1);
        \draw[thick] (1.5,0) -- (1,-1.5);
        \draw[very thick, blue] (-1.5,0) -- (1.5, 0);
        \node at (-0.25,-0.4) {\textcolor{blue}{$F$}};
      \end{tikzpicture}
    \end{subfigure}
    \hfill
    \begin{subfigure}{0.45\textwidth}
      \centering
      \begin{tikzpicture}[scale=0.7]
        \fill[fill=gray!20] (-4,0) -- (5,0) -- (4,1.5) -- (1,3) -- (-3,2.5) -- (-4,0);
        \draw[thick] (-4,0) -- (5,0) -- (4,1.5) -- (1,3) -- (-3,2.5) -- (-4,0);
        \draw[thick] (1,3) -- (-2,0) -- (-3,2.5);
        \draw[thick] (1,3) -- (2,0) -- (4,1.5);
        \draw[very thick, blue] (-2,0) -- (2,0);
        \node at (0,-0.4) {\textcolor{blue}{$F$}};
      \end{tikzpicture}
    \end{subfigure}
    \caption{Illustration of $\tilde{\omega}_F$ for $F\in\mathcal{F}_h^i$ (\emph{left}) and for $F\in\mathcal{F}_h^b$ (\emph{right}).}
    \label{fig:illustration tilde omega F}
  \end{figure}
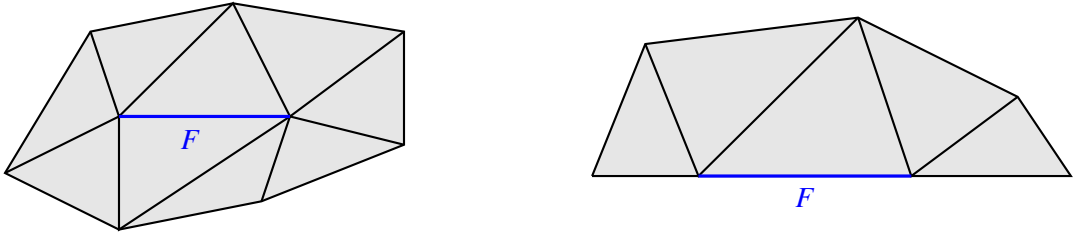

  We now consider the term $\mathfrak{T}_2$. Using the Cauchy-Schwarz and trace inequalities, we have
  \begin{equation}\label{estimation P2 without saturation assumption}
    \begin{split}
      \mathfrak{T}_2 &=
      \sum_{F\in\mathcal{F}_h^C}\left( \sigma^n(\VEC{u}) - \left[P_{1,\gamma}^n(\VEC{u}_h)\right]_{\mathbb{R}^-}, u^n-u^n_h \right)_F
       \\
       &\lesssim\bigg(
       \sum_{F\in \mathcal{F}_h^C} \frac{1}{h_F} \left\lVert \sigma^n(\VEC{u}) - \left[P_{1,\gamma}^n(\VEC{u}_h)\right]_{\mathbb{R}^-} \right\rVert_{F}^2
       \bigg)^{\nicefrac12}\normHuno{\VEC{u}-\VEC{u}_h}  \\
       & \lesssim \alpha^{-\nicefrac{1}{2}}
       \bigg(
       \sum_{F\in\mathcal{F}_h^C} \frac{1}{h_F} \left\lVert \sigma^n(\VEC{u}) - \left[P_{1,\gamma}^n(\VEC{u}_h)\right]_{\mathbb{R}^-} \right\rVert_{F}^2
       \bigg)^{\nicefrac{1}{2}} \energynorm{\VEC{u}-\VEC{u}_h}.
    \end{split}
  \end{equation}
  Inserting \eqref{estimation P1} and \eqref{estimation P2 without saturation assumption} into \eqref{eq - initial result}, we obtain \eqref{thesis upper bound without saturation assumption}.
  \medskip \\
  \noindent
  \emph{2) Proof of \eqref{thesis upper bound}.}
  For the second part of the theorem, we work under the saturation assumption \eqref{saturation inequality}.
  Using the contact condition $\sigma^n(\VEC{u})=\left[P_{1,\gamma}^n(\VEC{u})\right]_{\mathbb{R}^-}$ (see \eqref{eq:normal contact condition rewritten}), and the definition \eqref{eq:definition P_theta,gamma} of the operator $P_{1,\gamma}^n$, we have
  \begin{equation}\label{eq:initial estimate P2}
    \begin{split}
      \mathfrak{T}_2
      &= \left(\left[P_{1,\gamma}^n(\VEC{u})\right]_{\mathbb{R}^-} - \left[P_{1,\gamma}^n(\VEC{u}_h)\right]_{\mathbb{R}^-}, u^n-u^n_h \right)_{\Gamma_C}  \\
      &= \left(\left[P_{1,\gamma}^n(\VEC{u})\right]_{\mathbb{R}^-} - \left[P_{1,\gamma}^n(\VEC{u}_h)\right]_{\mathbb{R}^-}, \frac{1}{\gamma} \bigl[\gamma(u^n-u^n_h) - \sigma^n(\VEC{u}-\VEC{u}_h) + \sigma^n(\VEC{u}-\VEC{u}_h) \bigr] \right)_{\Gamma_C} \\
      &= -\left(\left[P_{1,\gamma}^n(\VEC{u})\right]_{\mathbb{R}^-} - \left[P_{1,\gamma}^n(\VEC{u}_h)\right]_{\mathbb{R}^-}, \frac{1}{\gamma} \left(P_{1,\gamma}^n(\VEC{u})-P_{1,\gamma}(\VEC{u}_h)\right) \right)_{\Gamma_C} \\
      &\quad + \left(\left[P_{1,\gamma}^n(\VEC{u})\right]_{\mathbb{R}^-} - \left[P_{1,\gamma}^n(\VEC{u}_h)\right]_{\mathbb{R}^-}, \frac{1}{\gamma} \sigma^n(\VEC{u}-\VEC{u}_h) \right)_{\Gamma_C}.
    \end{split}
  \end{equation}
  Due to the fact that $a[a]_{\mathbb{R}^-}=([a]_{\mathbb{R}^-})^2$ and $a[b]_{\mathbb{R}^-} \leq [a]_{\mathbb{R}^-} [b]_{\mathbb{R}^-}$, it follows that
  \begin{equation*}
    (a-b) ([a]_{\mathbb{R}^-}-[b]_{\mathbb{R}^-}) = a[a]_{\mathbb{R}^-} + b[b]_{\mathbb{R}^-} - a[b]_{\mathbb{R}^-} - b[a]_{\mathbb{R}^-} \geq ([a]_{\mathbb{R}^-}-[b]_{\mathbb{R}^-})^2
  \end{equation*}
  for every $a,b\in\mathbb{R}$.
  Using the latter inequality with $(a,b) = (P_{1,\gamma}^n(\VEC{u}), P_{1,\gamma}^n(\VEC{u}_h))$ for the first term in \eqref{eq:initial estimate P2} and the Cauchy-Schwarz inequality for the second one, we have
  \begin{equation*}
    \begin{split}
      \mathfrak{T}_2 &\leq - \left\lVert \gamma^{-\nicefrac{1}{2}} \left(\left[P_{1,\gamma}^n(\VEC{u})\right]_{\mathbb{R}^-} - \left[P_{1,\gamma}^n(\VEC{u}_h)\right]_{\mathbb{R}^-} \right) \right\rVert_{\Gamma_C}^2 \\
      &\quad
      + \left\lVert \gamma^{-\nicefrac{1}{2}} \left(\left[P_{1,\gamma}^n(\VEC{u})\right]_{\mathbb{R}^-}-\left[P_{1,\gamma}^n(\VEC{u}_h) \right]_{\mathbb{R}^-} \right) \right\rVert_{\Gamma_C} \left\lVert \gamma^{-\nicefrac{1}{2}} \sigma^n(\VEC{u}-\VEC{u}_h) \right\rVert_{\Gamma_C}.
    \end{split}
  \end{equation*}
  We continue using the generalized Young inequality $ab\leq a^2+b^2/4$ for the second term followed by the saturation assumption \eqref{saturation inequality} to write:
  \begin{equation}\label{estimation P2}
    \mathfrak{T}_2 \leq \frac{1}{4\gamma_0} \left\lVert{\left(\frac{\gamma_0}{\gamma}\right)^{\nicefrac{1}{2}} \sigma^n(\VEC{u}-\VEC{u}_h)}\right\rVert_{\Gamma_C}^2 \lesssim \frac{1}{4 \gamma_0} \normHuno{\VEC{u}-\VEC{u}_h}^2 \leq \frac{1}{4 \gamma_0 \alpha} \energynorm{\VEC{u}-\VEC{u}_h}^2.
  \end{equation}
  Combining \eqref{eq - initial result}, \eqref{estimation P1}, and \eqref{estimation P2} we finally get, for a suitable real number $C>0$,
  \begin{equation*}
    \left(\alpha^{\nicefrac{1}{2}}- \frac{C}{4 \gamma_0 \alpha} \right) \energynorm{\VEC{u}-\VEC{u}_h}^2 \leq C \energynorm{\VEC{u}-\VEC{u}_h} \dualnormresidual{\VEC{u}_h}
  \end{equation*}
  and, taking $\gamma_0$ sufficiently large,
  \begin{equation*}
    \energynorm{\VEC{u}-\VEC{u}_h} \lesssim \dualnormresidual{\VEC{u}_h},
  \end{equation*}
  thus concluding the proof of \eqref{thesis upper bound}.
\end{proof}
\medskip

\begin{theorem}[Control of the dual norm of the residual]\label{theorem - lower bound of the energy norm}
Assume that the solution $\VEC{u}$ of the continuous problem \eqref{StrongFormulation} belongs to $\VEC{H}^{\frac{3}{2}+\nu}(\Omega)$ for some $\nu>0$, and let $\VEC{u}_h\in\VEC{V}_h$ be the solution of the discrete problem \eqref{Nitsche-based_method theta=0}. 
Then,
\begin{equation}\label{lower bound energy norm 1}
  \dualnormresidual{\VEC{u}_h}
  \leq (d \lambda + 4\mu)^{\nicefrac{1}{2}} \energynorm{\VEC{u}-\VEC{u}_h} + \left(\sum_{F\in\mathcal{F}_h^C} h_F \left\lVert \sigma^n(\VEC{u})-\left[P_{1,\gamma}^n(\VEC{u}_h)\right]_{\mathbb{R}^-}\right\rVert_F^2\right)^{\nicefrac{1}{2}}.
\end{equation}
Moreover, if the saturation assumption \eqref{saturation inequality} holds, then
\begin{equation}\label{lower bound energy norm 2}
  \dualnormresidual{\VEC{u}_h}
  \lesssim \left[(d \lambda + 4\mu)^{\nicefrac{1}{2}}+\alpha^{-\nicefrac{1}{2}}\right]
    \energynorm{\VEC{u}-\VEC{u}_h} + \gamma_0 \left(\sum_{F\in\mathcal{F}_h^C} \frac{1}{h_F} \lVert \VEC{u}-\VEC{u}_h\rVert_F^2\right)^{\nicefrac{1}{2}}.
\end{equation}
\end{theorem}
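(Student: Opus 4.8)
The plan is to turn the residual into the sum of a volume term and a contact term, bound each against a test function of unit triple norm, and then, for the second part, re-express the contact term through the saturation assumption. The key preliminary step is the reformulation of the residual that is already, in essence, carried out in the proof of Theorem~\ref{theorem - upper bound of the energy norm}: since $\VEC{u}\in\VEC{H}^{\nicefrac{3}{2}+\nu}(\Omega)$ makes $\sigma^n(\VEC{u})$ square-integrable on $\Gamma_C$, the integration by parts of \eqref{eq:upper first term a} carries over verbatim to an arbitrary $\VEC{v}\in\VEC{H}^1_D(\Omega)$ and gives $a(\VEC{u},\VEC{v})=(\VEC{f},\VEC{v})+(\VEC{g}_N,\VEC{v})_{\Gamma_N}+(\sigma^n(\VEC{u}),v^n)_{\Gamma_C}$ (the tangential trace of $\VEC{\sigma}(\VEC{u})$ vanishing on $\Gamma_C$ by \eqref{eq:unilateral contact 2}). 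Inserting this into the definition \eqref{residual definition2} of $\mathcal{R}(\VEC{u}_h)$ and using the definition \eqref{eq:definition a and L} of $L$, I would obtain the identity
\[
  \langle\mathcal{R}(\VEC{u}_h),\VEC{v}\rangle
  = a(\VEC{u}-\VEC{u}_h,\VEC{v})
  + \bigl([P_{1,\gamma}^n(\VEC{u}_h)]_{\mathbb{R}^-}-\sigma^n(\VEC{u}),\,v^n\bigr)_{\Gamma_C}
  \qquad\forall\VEC{v}\in\VEC{H}^1_D(\Omega).
\]

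Next I would bound the two terms for a fixed $\VEC{v}$ with $\norm{\VEC{v}}=1$. For the volume term, using the symmetry of $\VEC{\sigma}(\VEC{u}-\VEC{u}_h)$ I would write $a(\VEC{u}-\VEC{u}_h,\VEC{v})=(\VEC{\sigma}(\VEC{u}-\VEC{u}_h),\VEC{\nabla}\VEC{v})\le\lVert\VEC{\sigma}(\VEC{u}-\VEC{u}_h)\rVert\,\lVert\VEC{\nabla}\VEC{v}\rVert\le(d\lambda+4\mu)^{\nicefrac{1}{2}}\energynorm{\VEC{u}-\VEC{u}_h}\,\lVert\VEC{\nabla}\VEC{v}\rVert$, where the last inequality follows from the pointwise bound $\lvert\VEC{A}\VEC{\tau}\rvert^2\le(d\lambda+4\mu)\,(\VEC{A}\VEC{\tau})\!:\!\VEC{\tau}$ for every symmetric $\VEC{\tau}$, obtained by expanding $\VEC{A}\VEC{\tau}=\lambda\tr(\VEC{\tau})\VEC{I}_d+2\mu\VEC{\tau}$ and comparing the coefficients of $(\tr\VEC{\tau})^2$ and $\lvert\VEC{\tau}\rvert^2$ on the two sides. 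For the contact term, I would split the integral over $F\in\mathcal{F}_h^C$, apply Cauchy--Schwarz on each face with an $h_F$-rescaling, then the discrete Cauchy--Schwarz inequality and $\lVert v^n\rVert_F\le\lVert\VEC{v}\rVert_F$, to reach $\bigl(\sum_{F\in\mathcal{F}_h^C}h_F\lVert\sigma^n(\VEC{u})-[P_{1,\gamma}^n(\VEC{u}_h)]_{\mathbb{R}^-}\rVert_F^2\bigr)^{\nicefrac{1}{2}}\normGamma{\VEC{v}}$. Since $\norm{\VEC{v}}=1$ forces $\lVert\VEC{\nabla}\VEC{v}\rVert\le1$ and $\normGamma{\VEC{v}}\le1$, adding the two contributions and passing to the supremum over $\VEC{v}$ gives \eqref{lower bound energy norm 1}.

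For \eqref{lower bound energy norm 2} I would only post-process the boundary sum in \eqref{lower bound energy norm 1}. Using $\sigma^n(\VEC{u})=[P_{1,\gamma}^n(\VEC{u})]_{\mathbb{R}^-}$ from \eqref{eq:normal contact condition rewritten}, the $1$-Lipschitz continuity of $[\,\cdot\,]_{\mathbb{R}^-}$, the linearity of $P_{1,\gamma}^n$, and the triangle inequality, on each $F\subset\partial T\cap\Gamma_C$ one has $\lVert\sigma^n(\VEC{u})-[P_{1,\gamma}^n(\VEC{u}_h)]_{\mathbb{R}^-}\rVert_F\le\lVert\sigma^n(\VEC{u}-\VEC{u}_h)\rVert_F+\gamma|_F\lVert u^n-u^n_h\rVert_F$ with $\gamma|_F=\gamma_0/h_T\le\gamma_0/h_F$. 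Multiplying by $h_F$, summing, and using $(a+b)^2\le2a^2+2b^2$, the boundary sum is bounded, up to a constant, by $\sum_{F\in\mathcal{F}_h^C}h_F\lVert\sigma^n(\VEC{u}-\VEC{u}_h)\rVert_F^2+\gamma_0^2\sum_{F\in\mathcal{F}_h^C}h_F^{-1}\lVert u^n-u^n_h\rVert_F^2$; in the first sum I would use $h_F\le h_T$ to recognize $\bigl\lVert(\gamma_0/\gamma)^{\nicefrac{1}{2}}\sigma^n(\VEC{u}-\VEC{u}_h)\bigr\rVert_{\Gamma_C}^2$, which by \eqref{saturation inequality} and the ellipticity \eqref{eq:ellipticity a} is $\lesssim\alpha^{-1}\energynorm{\VEC{u}-\VEC{u}_h}^2$, and in the second I would use $\lvert u^n-u^n_h\rvert\le\lvert\VEC{u}-\VEC{u}_h\rvert$. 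Taking square roots with $\sqrt{a+b}\le\sqrt{a}+\sqrt{b}$ and plugging back into \eqref{lower bound energy norm 1} yields \eqref{lower bound energy norm 2}.

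The proof is a chain of Cauchy--Schwarz and triangle inequalities, so I do not expect a genuine obstacle; the only delicate points are pinning down the explicit constant $(d\lambda+4\mu)^{\nicefrac{1}{2}}$ — which is the pointwise spectral comparison of $\VEC{A}$ with $\VEC{A}^{-1}$ above and is what forces that particular form — and matching the $h_F$-weighted boundary sum with the $h_T$-weighted quantity appearing in the saturation assumption, which however only requires the trivial geometric inequality $h_F\le h_T$ for boundary faces $F\subset\partial T$.
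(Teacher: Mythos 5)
Your proposal is correct and follows essentially the same route as the paper: in both cases the residual is rewritten through an integration by parts of $a(\VEC{u},\VEC{v})$ into $a(\VEC{u}-\VEC{u}_h,\VEC{v}) + \bigl([P_{1,\gamma}^n(\VEC{u}_h)]_{\mathbb{R}^-}-\sigma^n(\VEC{u}),v^n\bigr)_{\Gamma_C}$, the volume and contact terms are bounded via Cauchy--Schwarz against $\lVert\VEC{\nabla}\VEC{v}\rVert$ and $\normGamma{\VEC{v}}$, and for the second bound one uses $\sigma^n(\VEC{u})=[P_{1,\gamma}^n(\VEC{u})]_{\mathbb{R}^-}$, the $1$-Lipschitz property of $[\,\cdot\,]_{\mathbb{R}^-}$, the triangle inequality, $h_F\le h_T$, the saturation assumption, and ellipticity. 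Your added derivation of the constant $(d\lambda+4\mu)^{1/2}$ from the pointwise bound $\lvert\VEC{A}\VEC{\tau}\rvert^2\le(d\lambda+4\mu)(\VEC{A}\VEC{\tau}):\VEC{\tau}$ is correct and fills in a step the paper only states.
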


\begin{proof}
  \emph{1) Proof of \eqref{lower bound energy norm 1}.}
  By definition \eqref{residual definition2} of the residual together with \eqref{StrongFormulation} (valid almost everywhere), and Green's formula, it holds:
  For any $\VEC{v}\in \VEC{H}^1_D(\Omega)$,
  \begin{equation*}
    \begin{split}
      \langle\mathcal{R}(\VEC{u}_h),\VEC{v}\rangle_{} &= (\VEC{f}, \VEC{v}) + (\VEC{g}_N, \VEC{v})_{\Gamma_N} - (\VEC{\sigma}(\VEC{u}_h), \VEC{\varepsilon}(\VEC{v})) + \left(\left[P_{1,\gamma}^n(\VEC{u}_h)\right]_{\mathbb{R}^-}, v^n \right)_{\Gamma_C} \\ 
      &= (\VEC{\sigma}(\VEC{u}-\VEC{u}_h),\VEC{\varepsilon}(\VEC{v}))-\left(\sigma^n(\VEC{u})-\left[P_{1,\gamma}^n(\VEC{u}_h)\right]_{\mathbb{R}^-},v^n\right)_{\Gamma_C}.
    \end{split}
  \end{equation*}
  Then, using the symmetry of the Cauchy stress tensor $\VEC{\sigma}(\VEC{u}-\VEC{u}_h)$, 
  the Cauchy-Schwarz inequality, and the definition \eqref{eq:triple norm for dual norm} of the norm $\norm{\VEC{v}}$, and additionally observing that $$\lVert\VEC{\sigma}(\VEC{u}-\VEC{u}_h)\rVert \leq (d \lambda + 4\mu)^{\nicefrac{1}{2}} \energynorm{\VEC{u}-\VEC{u}_h},$$ we have
  \begin{align*}
    \langle \mathcal{R}(\VEC{u}_h),\VEC{v}\rangle 
    &\leq \left\lVert\VEC{\sigma}(\VEC{u}-\VEC{u}_h)\right\rVert \left\lVert\VEC{\nabla}\VEC{v}\right\rVert + \sum_{F\in\mathcal{F}_h^C} \left\lVert \sigma^n(\VEC{u})-\left[P_{1,\gamma}^n(\VEC{u}_h)\right]_{\mathbb{R}^-}\right\rVert_F \left\lVert v^n\right\rVert_F  \\
    &\leq (d \lambda + 4\mu)^{\nicefrac{1}{2}} \energynorm{\VEC{u}-\VEC{u}_h} \left\lVert\VEC{\nabla}\VEC{v}\right\rVert + \sum_{F\in\mathcal{F}_h^C} h_F^{\nicefrac{1}{2}}\left\lVert \sigma^n(\VEC{u})-\left[P_{1,\gamma}^n(\VEC{u}_h)\right]_{\mathbb{R}^-}\right\rVert_F \frac{1}{h_F^{\nicefrac{1}{2}}}\left\lVert \VEC{v}\right\rVert_F  \\
    &\leq \left[(d \lambda + 4\mu)^{\nicefrac{1}{2}} \energynorm{\VEC{u}-\VEC{u}_h} + \left(\sum_{F\in\mathcal{F}_h^C} h_F\left\lVert \sigma^n(\VEC{u})-\left[P_{1,\gamma}^n(\VEC{u}_h)\right]_{\mathbb{R}^-}\right\rVert_F^2\right)^{\nicefrac{1}{2}} \right] \norm{\VEC{v}}.
  \end{align*}
  By definition \eqref{eq - definition dual norm} of the dual norm, this yields \eqref{lower bound energy norm 1}.
  \medskip\\
  \noindent
  \emph{2) Proof of \eqref{lower bound energy norm 2}.}
  Under the saturation assumption \eqref{saturation inequality}, starting from \eqref{lower bound energy norm 1} and using \eqref{eq:normal contact condition rewritten}, 
  we obtain, for all $F\in\mathcal{F}_h^C$,
  \begin{equation*}
    \begin{split}
      &h_F \left\lVert \sigma^n(\VEC{u})-\left[P_{1,\gamma}^n(\VEC{u}_h)\right]_{\mathbb{R}^-}\right\rVert_F^2 = h_F \left\lVert \left[P_{1,\gamma}^n(\VEC{u})\right]_{\mathbb{R}^-}-\left[P_{1,\gamma}^n(\VEC{u}_h)\right]_{\mathbb{R}^-}\right\rVert_F^2  \\
      &\hspace{1cm} \leq h_F \left\lVert P_{1,\gamma}^n(\VEC{u}) - P_{1,\gamma}^n(\VEC{u}_h)\right\rVert_F^2 \lesssim h_F \left\lVert \VEC{\sigma}(\VEC{u}-\VEC{u}_h) \right\rVert_F^2 + h_F \left\lVert \gamma(\VEC{u}-\VEC{u}_h)\right\rVert_F^2,
    \end{split}
  \end{equation*}
  where we have applied the property $([a]_{\mathbb{R}^-}-[b]_{\mathbb{R}^-})^2 \leq (a-b)^2$ valid for any $a,b\in\mathbb{R}$, with $(a, b) = (P_{1,\gamma}^n(\VEC{u}), P_{1,\gamma}^n(\VEC{u}_h))$ to pass to the second line and the triangle inequality to conclude.
  Then, using the saturation assumption \eqref{saturation inequality} together with the ellipticity property \eqref{eq:ellipticity a} and the choice of $\gamma$, we obtain:
  \begin{equation*}
    \begin{aligned}
      &\left(\sum_{F\in\mathcal{F}_h^C} h_F\left\lVert \sigma^n(\VEC{u})-\left[P_{1,\gamma}^n(\VEC{u}_h)\right]_{\mathbb{R}^-}\right\rVert_F^2\right)^{\nicefrac{1}{2}} \\
      &\qquad
      \lesssim\left(\sum_{F\in\mathcal{F}_h^C} h_F \left\lVert \VEC{\sigma}(\VEC{u}-\VEC{u}_h) \right\rVert_F^2 + \sum_{F\in\mathcal{F}_h^C} h_F \left\lVert \gamma(\VEC{u}-\VEC{u}_h)\right\rVert_F^2\right)^{\nicefrac{1}{2}}  \\
        &\qquad
        \leq \left( \sum_{T\in\mathcal{T}_h} \sum_{F\in\mathcal{F}_T^C} h_T \left\lVert \VEC{\sigma}(\VEC{u}-\VEC{u}_h) \right\rVert_F^2\right)^{\nicefrac{1}{2}} + \left(\sum_{T\in\mathcal{T}_h} \sum_{F\in\mathcal{F}_T^C} h_F \left(\frac{\gamma_0}{h_T}\right)^2 \left\lVert \VEC{u}-\VEC{u}_h\right\rVert_F^2 \right)^{\nicefrac{1}{2}}  \\
        &\qquad
        \leq \left\lVert\left(\frac{\gamma_0}{\gamma}\right)^{\nicefrac{1}{2}} \VEC{\sigma}(\VEC{u}-\VEC{u}_h) \right\rVert_{\Gamma_C} + \gamma_0 \left(\sum_{F\in\mathcal{F}_h^C} \frac{1}{h_F} \left\lVert \VEC{u}-\VEC{u}_h\right\rVert_F^2 \right)^{\nicefrac{1}{2}}  \\
        &\qquad
        \lesssim \alpha^{-\nicefrac{1}{2}} \energynorm{\VEC{u}-\VEC{u}_h} + \gamma_0 \left(\sum_{F\in\mathcal{F}_h^C} \frac{1}{h_F} \left\lVert \VEC{u}-\VEC{u}_h\right\rVert_F^2 \right)^{\nicefrac{1}{2}}.
    \end{aligned}
  \end{equation*}
  Combining this bound with \eqref{lower bound energy norm 1}, we obtain \eqref{lower bound energy norm 2}.
\end{proof}

\section{Identification of the error components}\label{sec:identification.error.components}

We consider the resolution of the (nonlinear) discrete problem \eqref{Nitsche-based_method theta=0} with an iterative method in which, at each iteration $k\ge1$, the nonlinear term $\left[P_{1,\gamma}^n(\,\cdot\,)\right]_{\mathbb{R}^-}$ is replaced by a linear approximation $P_{\text{lin}}^{k-1}(\,\cdot\,)$. A new approximation of the discrete solution is then obtained solving the following problem:
Find $\VEC{u}_h^k\in\VEC{V}_h$ such that
\begin{equation}\label{Linear approximated problem}
  a(\VEC{u}_h^k,\VEC{v}_h) - \left(P_{\text{lin}}^{k-1}(\VEC{u}_h^k),v_h^n\right)_{\Gamma_C} = L(\VEC{v}_h) \qquad \forall\VEC{v}_h\in\VEC{V}_h.
\end{equation}
The linearized operator $P_{\rm lin}^{k-1}(\,\cdot\,)$ is based on the following regularization of the projection $[\,\cdot\,]_{\mathbb{R}^-}$:
Given a real number $\delta>0$ (representing the amount of regularization),
\[
\left[x\right]_{\text{reg},\delta} \coloneqq \begin{cases}
  x & \qquad \text{if}\ x\leq -\delta\\
  -\displaystyle\frac{1}{4\delta}x^2 + \frac{1}{2}x-\frac{\delta}{4} & \qquad \text{if}\ \lvert x\rvert < \delta\\
  0 & \qquad \text{if}\ x\geq\delta.
\end{cases}
\]
Figure \ref{figure_projection and regularized operators} shows the graphs of the projection operator $\left[\cdot\right]_{\mathbb{R}^-}$ and of the regularized operator $\left[\cdot\right]_{\text{reg},\delta}$.
  Notice that they coincide for $\lvert x\rvert \geq \delta$ and $\left[\cdot\right]_{\text{reg},\delta}$ belongs to $C^1(\mathbb{R})$ (but not to $C^2(\mathbb{R})$).
  The linearized operator $P_{\text{lin}}^{k-1}(\,\cdot\,)$ is obtained setting,  for any $\VEC{w}_h\in\VEC{V}_h$,%
\begin{equation}\label{P_lin,delta definition}
    \begin{split}
        P_{\text{lin},\delta}^{k-1}(\VEC{w}_h) \coloneqq& \left[P_{1,\gamma}^n(\VEC{u}_h^{k-1}) \right]_{\text{reg},\delta} + \frac{\partial \left[P_{1,\gamma}^n (\VEC{v}) \right]_{\text{reg},\delta}}{\partial \VEC{v}} \Bigg|_{\VEC{v}=\VEC{u}_h^{k-1}} \cdot (\VEC{w}_h-\VEC{u}_h^{k-1})  \\
        =& \left[P_{1,\gamma}^n(\VEC{u}_h^{k-1}) \right]_{\text{reg},\delta} + \frac{\mathrm{d} \left[x\right]_{\text{reg},\delta}}{\mathrm{d}x} \Bigg|_{x=P_{1,\gamma}^n(\VEC{u}_h^{k-1})} \left(P_{1,\gamma}^n(\VEC{w}_h)-P_{1,\gamma}^n(\VEC{u}_h^{k-1})\right).
    \end{split}
\end{equation}
Here, we add the subscript $\delta$ to emphasize that the linear operator depends on the choice of this parameter.
The refined error estimate presented in the following section enables an automatic tuning of $\delta$.

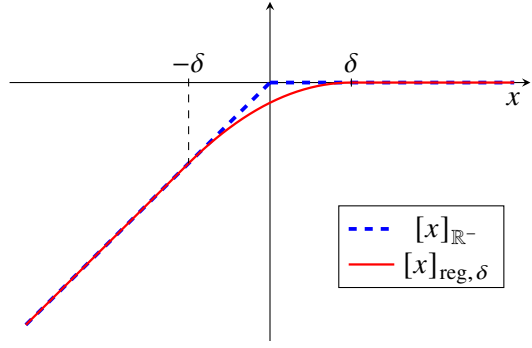
\begin{figure}[tbp]
    \centering
    \begin{tikzpicture}
        \begin{axis}[xmin=-3.2,
                xmax=3.2,
                ymax=1,
                ymin=-3.2,
                axis lines = middle,
                legend style={at={(0.95,0.4)}},
                axis equal image]
            \pgfplotsset{ticks=none}        
            \addplot[domain=-3:0,blue,line width=0.5mm,dashed] {x};
            \addplot[domain=-3:-1,red,line width=0.3mm] {x};
            \legend{$\left[x\right]_{\mathbb{R}^-}$,$\left[x\right]_{\text{reg},\delta}$};
            \addplot[domain=0:3,blue,line width=0.5mm,dashed] {0};
            \addplot[domain=1:3,red,line width=0.3mm] {0};
            \addplot[domain=-1:1,red,line width=0.3mm] {-0.25*x^2+0.5*x-0.25};
            \node at (3,-0.2) {$x$};
            \draw (1,-0.05) -- (1,0.05);
            \draw[dashed] (-1,0.05) -- (-1,-1);
            \node at (1,0.25) {$\delta$};
            \node at (-1,0.25) {$-\delta$};
       \end{axis}
    \end{tikzpicture}
   \caption{Comparison between the projection operator $\left[x\right]_{\mathbb{R}^-}$ (blue) and the regularized operator $\left[x\right]_{\text{reg},\delta}$ (red).}
    \label{figure_projection and regularized operators}
\end{figure}

\subsection{A posteriori error estimate distinguishing the error components}

We present in this section an error estimate for $\VEC{u}_h^k$ which enables one to identify and separate the different components of the error.
The estimate hinges on the following assumption:
\begin{assumption}[Decomposition of the stress reconstruction]\label{assumption on stress reconstruction}
  Let $\VEC{\sigma}_h^k$ be an equilibrated stress reconstruction in the sense of Definition \ref{definition - equilibrated stress reconstruction}.
  Then, $\VEC{\sigma}_h^k$ %
  can be decomposed into three parts 
  \begin{equation}\label{eq:sigmah.decomposition}
    \VEC{\sigma}_h^k = \VEC{\sigma}_{h,\rm dis}^k + \VEC{\sigma}_{h,\rm reg}^k + \VEC{\sigma}_{h,\rm lin}^k,
  \end{equation}
  where $\VEC{\sigma}_{h,\rm dis}^k$ represents \emph{discretization}, $\VEC{\sigma}_{h,\rm reg}^k$ represents \emph{regularization}, and $\VEC{\sigma}_{h,\rm lin}^k$ represents \emph{linearization}.
\end{assumption}

\noindent
In Section \ref{sec - reconstruction} we will show how to obtain an equilibrated stress reconstruction which satisfies this assumption.
Finally, we introduce 
the following local error estimators:
For every element $T\in\mathcal{T}_h$,
\begin{subequations}\label{local estimators for u_h^k}
  \begin{align}
    \eta_{\text{osc},T}^k &\coloneqq \frac{h_T}{\pi} \left\lVert\VEC{f} + \VEC{\rm div}\, 
    \VEC{\sigma}_h^k \right\rVert_T,
    &\qquad& \text{(oscillation)}
    \label{oscillation error estimator} \\
    \eta_{\text{str},T}^k &\coloneqq \lVert \VEC{\sigma}_{h,\rm dis}^k-\VEC{\sigma}(\VEC{u}_h^k)\rVert_T,
    &\qquad& \text{(stress)}
    \\
    \eta_{\text{reg1},T}^k &\coloneqq \lVert\VEC{\sigma}_{h,\rm reg}^k\rVert_T
    \qquad \text{and}\qquad
    \eta_{\text{reg2},T}^k \coloneqq \sum_{F\in\mathcal{F}_T^C} h_F^{\nicefrac{1}{2}} \bigl\lVert \sigma_{h,\rm reg}^{k,n} \bigr\rVert_{F},
    &\qquad& \text{(regularization)}
    \\
    \eta_{\text{lin1},T}^k &\coloneqq \lVert\VEC{\sigma}_{h,\rm lin}^k\rVert_T
    \qquad \text{and}\qquad
    \eta_{\text{lin2},T}^k \coloneqq \sum_{F\in\mathcal{F}_T^C} h_F^{\nicefrac{1}{2}} \bigl\lVert \sigma_{h,\rm lin}^{k,n} \bigr\rVert_F,
    &\qquad& \text{(linearization)}
    \\
    \eta_{\text{Neu},T}^k &\coloneqq \sum_{F\in\mathcal{F}_T^N} C_{t,T,F} h_F^{\nicefrac{1}{2}} \left\lVert \VEC{g}_N-\VEC{\sigma}_h^k \VEC{n} \right\rVert_F,
    &\qquad& \text{(Neumann)}
    \label{Neumann error estimator} \\
    \eta_{\rm cnt,T}^k &\coloneqq \sum_{F\in\mathcal{F}_T^C} h_F^{\nicefrac{1}{2}} \left\lVert \left[P_{1,\gamma}^n(\VEC{u}_h^k)\right]_{\mathbb{R}^-} - \sigma^{k,n}_{h,\rm dis}\right\rVert_F.
    &\qquad& \text{(contact)}
    \label{discretization error estimator}
  \end{align}
\end{subequations}
The corresponding global error estimators are defined setting
\begin{equation}\label{global estimators for u_h^k}
    \eta_{\bullet}^k \coloneqq \left(\sum_{T\in\mathcal{T}_h} \left(\eta_{\bullet,T}^k\right)^2\right)^{\nicefrac{1}{2}}.
\end{equation}

\begin{theorem}[A posteriori error estimate distinguishing the error components]\label{theorem - a posteriori estimation u_h^k}
  Let $\VEC{u}_h^k\in\VEC{V}_h$ be the solution of the linearized problem \eqref{Linear approximated problem} with $P_{\text{lin},\delta}(\,\cdot\,)$ defined by \eqref{P_lin,delta definition}, and let $\mathcal{R}(\VEC{u}_h^k)$ be the residual of $\VEC{u}_h^k$ defined by \eqref{residual definition2}.
  Then, under Assumption \ref{assumption on stress reconstruction}, it holds
  \begin{multline}\label{local upper bound with identification}
    \dualnormresidual{\VEC{u}_h^k}
    \\
    \leq \Biggl[
      \sum_{T\in\mathcal{T}_h} \Bigl((\eta_{\rm osc,T}^k + \eta_{\rm str,T}^k + \eta_{\rm reg1,T}^k + \eta_{\rm lin1,T}^k + \eta_{\rm Neu,T}^k)^2
      + (\eta_{\rm cnt,T}^k + \eta_{\rm reg2,T}^k + \eta_{\rm lin2,T}^k)^2 \Bigr)
      \Biggr]^{\nicefrac{1}{2}}
  \end{multline}
  and, as a result,
  \begin{equation}\label{global upper bound with identification}
    \dualnormresidual{\VEC{u}_h^k}
    \leq \Bigl[
      (\eta_{\rm osc}^k + \eta_{\rm str}^k + \eta_{\rm reg1}^k + \eta_{\rm lin1}^k + \eta_{\rm Neu}^k)^2 + (\eta_{\rm cnt}^k + \eta_{\rm reg2}^k + \eta_{\rm lin2}^k)^2
      \Bigr]^{\nicefrac{1}{2}}.
  \end{equation}
\end{theorem}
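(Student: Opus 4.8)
The plan is to mimic line by line the proof of Theorem~\ref{theorem - a posteriori error estimation for $u_h$}, the only genuinely new ingredient being the splitting~\eqref{eq:sigmah.decomposition}. Note first that the statement never uses the fact that $\VEC{u}_h^k$ solves~\eqref{Linear approximated problem}: exactly as in Theorem~\ref{theorem - a posteriori error estimation for $u_h$}, the bound holds for \emph{any} element of $\VEC{V}_h$ in place of $\VEC{u}_h^k$, and no Galerkin-type orthogonality is invoked. Since $\VEC{\sigma}_h^k$ is an equilibrated stress reconstruction, Properties~1., 3., and 4. of Definition~\ref{definition - equilibrated stress reconstruction} give the Green formula~\eqref{eq:Green's formula} with $\VEC{\sigma}_h$ replaced by $\VEC{\sigma}_h^k$. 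Fixing $\VEC{v}\in\VEC{H}^1_D(\Omega)$ with $\norm{\VEC{v}}=1$, expanding $L(\cdot)$ and $a(\cdot,\cdot)$ in~\eqref{residual definition2}, adding and subtracting $(\VEC{\sigma}_h^k,\VEC{\nabla v})$, using the symmetry of $\VEC{\sigma}(\VEC{u}_h^k)$ and Green's formula, one obtains exactly as before
\begin{equation*}
  \langle\mathcal{R}(\VEC{u}_h^k),\VEC{v}\rangle
  = (\VEC{f}+\VEC{\rm div}\,\VEC{\sigma}_h^k,\VEC{v})
  + (\VEC{\sigma}_h^k-\VEC{\sigma}(\VEC{u}_h^k),\VEC{\nabla v})
  + (\VEC{g}_N-\VEC{\sigma}_h^k\VEC{n},\VEC{v})_{\Gamma_N}
  + \bigl([P_{1,\gamma}^n(\VEC{u}_h^k)]_{\mathbb{R}^-}-\sigma_h^{k,n},v^n\bigr)_{\Gamma_C}.
\end{equation*}

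Next I would insert the decomposition~\eqref{eq:sigmah.decomposition} into the second and fourth terms only, leaving the first and third untouched so that $\eta_{\rm osc,T}^k$ and $\eta_{\rm Neu,T}^k$ come out with the full reconstruction $\VEC{\sigma}_h^k$, consistently with~\eqref{local estimators for u_h^k}: write $\VEC{\sigma}_h^k-\VEC{\sigma}(\VEC{u}_h^k)=(\VEC{\sigma}_{h,\rm dis}^k-\VEC{\sigma}(\VEC{u}_h^k))+\VEC{\sigma}_{h,\rm reg}^k+\VEC{\sigma}_{h,\rm lin}^k$ in the volume term, and $\sigma_h^{k,n}=\sigma_{h,\rm dis}^{k,n}+\sigma_{h,\rm reg}^{k,n}+\sigma_{h,\rm lin}^{k,n}$ in the contact term. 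Each of the resulting contributions is then bounded exactly as its counterpart in Theorem~\ref{theorem - a posteriori error estimation for $u_h$}: Property~2. tested against $\VEC{\Pi}^0_T\VEC{v}$ plus the Poincaré inequality gives $\sum_T\eta_{\rm osc,T}^k\lVert\VEC{\nabla v}\rVert_T$; three Cauchy--Schwarz inequalities on the volume pieces give $\sum_T(\eta_{\rm str,T}^k+\eta_{\rm reg1,T}^k+\eta_{\rm lin1,T}^k)\lVert\VEC{\nabla v}\rVert_T$; Property~3. tested against $\VEC{\Pi}^0_F\VEC{v}$ plus the trace inequality gives $\sum_T\eta_{\rm Neu,T}^k\lVert\VEC{\nabla v}\rVert_T$; and Cauchy--Schwarz together with the local seminorm $\normT{\,\cdot\,}$ on the three contact pieces gives $\sum_T(\eta_{\rm cnt,T}^k+\eta_{\rm reg2,T}^k+\eta_{\rm lin2,T}^k)\normT{\VEC{v}}$, the $\ell^1$ structure of $\eta_{\rm cnt,T}^k$, $\eta_{\rm reg2,T}^k$, $\eta_{\rm lin2,T}^k$ over faces being precisely what makes this step work.

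To conclude, I would set $\eta_{a,T}^k\coloneqq\eta_{\rm osc,T}^k+\eta_{\rm str,T}^k+\eta_{\rm reg1,T}^k+\eta_{\rm lin1,T}^k+\eta_{\rm Neu,T}^k$ and $\eta_{c,T}^k\coloneqq\eta_{\rm cnt,T}^k+\eta_{\rm reg2,T}^k+\eta_{\rm lin2,T}^k$, apply the discrete Cauchy--Schwarz inequality to $\sum_T(\eta_{a,T}^k\lVert\VEC{\nabla v}\rVert_T+\eta_{c,T}^k\normT{\VEC{v}})$, and use $\sum_T(\lVert\VEC{\nabla v}\rVert_T^2+\normT{\VEC{v}}^2)=\norm{\VEC{v}}^2=1$ (cf.~\eqref{eq:triple norm for dual norm} and~\eqref{normGamma}); taking the supremum over such $\VEC{v}$ in~\eqref{eq - definition dual norm} yields the local bound~\eqref{local upper bound with identification}. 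The global bound~\eqref{global upper bound with identification} then follows from the triangle inequality for $\ell^2$-sequences indexed by $\mathcal{T}_h$ (applied within each of the two groups of estimators appearing in~\eqref{local upper bound with identification}), together with the monotonicity of $x\mapsto x^2$ on $\mathbb{R}^+$. I do not expect a genuine obstacle here: the argument is a bookkeeping-heavy but mechanical repetition of the proof of Theorem~\ref{theorem - a posteriori error estimation for $u_h$}, and the only point to watch is to split $\VEC{\sigma}_h^k$ in the volume and contact terms but \emph{not} in the oscillation and Neumann terms, so that the estimators match their definitions in~\eqref{local estimators for u_h^k}.
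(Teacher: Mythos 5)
Your proposal is correct and follows essentially the same route as the paper: invoke (or re-derive) the bound of Theorem~\ref{theorem - a posteriori error estimation for $u_h$} with $\VEC{u}_h^k$ and $\VEC{\sigma}_h^k$, split $\VEC{\sigma}_h^k$ via~\eqref{eq:sigmah.decomposition} and the triangle inequality in the volume and contact terms (the paper splits after Cauchy--Schwarz, you split before---algebraically equivalent), and pass to the global bound via the $\ell^2$-Minkowski inequality applied to each of the two groups of estimators. Your remark that the linearized problem \eqref{Linear approximated problem} is never actually used, only the residual definition and the properties of $\VEC{\sigma}_h^k$ via Definition~\ref{definition - equilibrated stress reconstruction} and Assumption~\ref{assumption on stress reconstruction}, is accurate and matches the (implicit) structure of the paper's argument.
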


\begin{proof}
  Proceeding as in the proof of Theorem \ref{theorem - a posteriori error estimation for $u_h$}, we immediately get
  \begin{multline*}
    \dualnormresidual{\VEC{u}_h^k}
    \\
    \leq \Biggl[
    \sum_{T\in\mathcal{T}_h} \biggl(
      \bigl(\eta_{\text{osc},T}^k + \bigl\lVert\VEC{\sigma}_h^k - \VEC{\sigma}(\VEC{u}_h^k)\bigr\rVert_T + \eta_{\text{Neu},T}^k \bigr)^2
      + \biggr(
      \sum_{F\in\mathcal{F}_T^C} h_F^{\nicefrac{1}{2}} \left\lVert \left[P_{1,\gamma}^n(\VEC{u}_h^k)\right]_{\mathbb{R}^-} - \sigma_h^{k,n}\right\rVert_F
      \biggr)^2
      \biggr)
    \Biggr]^{\nicefrac{1}{2}}.
  \end{multline*}
  Decomposing $\VEC{\sigma}_h^k$ according to \eqref{eq:sigmah.decomposition} and using the triangle inequality, we arrive at \eqref{local upper bound with identification}.
  Finally,
  \eqref{global upper bound with identification} is obtained from \eqref{local upper bound with identification} applying twice the inequality $\sum_{T\in\mathcal{T}_h}\left(\sum_{i=1}^m a_{i,T}\right)^2\le\left(\sum_{i=1}^m a_i\right)^2$ valid for all families of nonnegative real numbers $(a_{i,T})_{1\le i\le m,\, T\in\mathcal{T}_h}$ with $a_i\coloneq\left(\sum_{T\in\mathcal{T}_h}a_{i,T}^2\right)^{\nicefrac12}$ for all $1\le i\le m$.
\end{proof}

\subsection{Fully adaptive algorithm}

We propose an adaptive algorithm based on the error estimators \eqref{local estimators for u_h^k} and \eqref{global estimators for u_h^k}, and on the result of Theorem \ref{theorem - a posteriori estimation u_h^k}.
Denote by $\gamma_{\text{reg}}, \gamma_{\text{lin}} \in (0,1)$ two user-dependent parameters that represent the relative magnitude of the regularization and linearization errors with respect to the total error. 
Moreover, we define the following local estimators:
\[
\begin{gathered}
  \eta_{\text{reg},T}^k \coloneqq \eta_{\text{reg1},T}^k + \eta_{\text{reg2},T}^k,
  \qquad
  \eta_{\text{lin},T}^k \coloneqq \eta_{\text{lin1},T}^k + \eta_{\text{lin2},T}^k.
\end{gathered}
\]
The corresponding global counterparts are given by \eqref{global estimators for u_h^k} with $\bullet\in\{\text{reg},\text{lin}\}$.
With these estimators and the parameters $\gamma_{\text{reg}}$, $\gamma_{\text{lin}}$, we define stopping criteria for the regularization and linearization loops, respectively, so that both the parameter $\delta$ and the number of Newton iterations on every mesh refinement iteration will be fixed automatically by the adaptive algorithm. 
For all $T\in\mathcal{T}_h$, the total error estimator is given by
\begin{gather}
	\eta_{\text{tot},T}^k \coloneqq\left[
    \bigl(\eta_{\text{osc},T}^k + \eta_{\text{str},T}^k + \eta_{\text{reg1},T}^k + \eta_{\text{lin1},T}^k + \eta_{\text{Neu},T}^k\bigr)^2 + \bigl(\eta_{\text{cnt},T}^k + \eta_{\text{reg2},T}^k + \eta_{\text{lin2},T}^k\bigr)^2.
    \right]^{\nicefrac12}\label{eq:local total estimator}
\end{gather}

\begin{algorithm}[H]
\caption{Adaptive algorithm}\label{algorithm}
\begin{algorithmic}[1]
\State \textbf{choose} an initial function $\VEC{u}_h^{0}\in \mathbf{V}_h$, $\delta > 0$, $\gamma_{\text{reg}}, \gamma_{\text{lin}} \in (0,1)$
\Repeat \ \{mesh refinement loop\}
    \Repeat \ \{regularization loop\}
        \State \textbf{set} $k = 0$
        \Repeat \ \{Newton linearization loop\}
            \State \textbf{set} $k = k+1$
            \State \textbf{setup} the operator $P_{\text{lin},\delta}^{k-1}$ and the linear system
            \State \textbf{compute} $\VEC{u}_h^{k}$, $\VEC{\sigma}_h^{k}$, and the local and global estimators
        \Until {$\eta_{\rm lin}^{k} \leq \gamma_{\rm lin} (\eta_{\rm osc}^{k} + \eta_{\rm str}^{k} + \eta_{\rm Neu}^{k} + \eta_{\rm cnt}^{k})$} \label{alg:global stopping criterion lin} 
        \State \textbf{decrease} $\delta$ (e.g. $\delta = \delta/2$)
    \Until {$\eta_{\rm reg}^{k} \leq \gamma_{\rm reg} (\eta_{\rm osc}^{k} + \eta_{\rm str}^{k} + \eta_{\rm Neu}^{k} + \eta_{\rm cnt}^{k} + \eta_{\rm lin}^{k})$} \label{alg:global stopping criterion reg} 
    \State \textbf{set} $\delta$ at its previous value (e.g. $\delta = 2\delta$)
    \State \textbf{refine} the elements of the mesh where 
    $\eta_{\rm tot, T}^k$
    is higher
    \State \textbf{update} data
    \Until {$\eta_{\rm tot, T}^k$ is distributed evenly over the mesh}
\end{algorithmic}
\end{algorithm}

\begin{remark}[Local stopping criteria]
  The stopping critera in Lines \ref{alg:global stopping criterion lin} and \ref{alg:global stopping criterion reg} can alternatively be enforced locally inside each element:
  \begin{subequations}\label{eq:local stopping criteria}
    \begin{alignat}{4}
    	\eta_{\rm lin, T}^{k}
      &\leq \gamma_{\rm lin, T} (\eta_{\rm osc, T}^{k} + \eta_{\rm str, T}^{k} + \eta_{\rm Neu, T}^{k} + \eta_{\rm cnt, T}^{k})
      &\qquad& \forall T \in\mathcal{T}_h,
    	\label{eq:local stopping criterion 1} \\
    	\eta_{\rm reg, T}^{k}
      &\leq \gamma_{\rm reg, T} (\eta_{\rm osc, T}^{k} + \eta_{\rm str, T}^{k} + \eta_{\rm Neu, T}^{k} + \eta_{\rm cnt, T}^{k} + \eta_{\rm lin, T}^{k})
      &\qquad& \forall T \in\mathcal{T}_h,
    	\label{eq:local stopping criterion 2}
    \end{alignat}
  \end{subequations}
    where the parameters $\gamma_{\rm lin, T}, \gamma_{\rm reg, T} \in (0,1)$ can possibly vary element by element; see, e.g., \cite{Jiranek.Strakos.ea:10} and also the discussion in \cite[Section 4.1]{DiPietro2015}.
\end{remark}

\section{Equilibrated stress reconstructions}\label{sec - reconstruction}

We first show how to construct an equilibrated stress reconstruction $\VEC{\sigma}_h$ that satisfies the conditions of Definition \ref{definition - equilibrated stress reconstruction}, then modify the construction to match Assumption \ref{assumption on stress reconstruction}.
 
\subsection{Basic equilibrated stress reconstruction}\label{sec - reconstruction - basic}

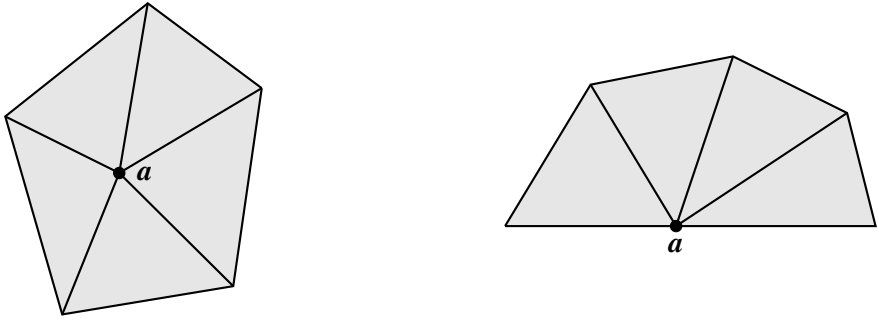
\begin{figure}[tb]
    \centering
    \begin{subfigure}{0.45\textwidth}
        \centering
        \begin{tikzpicture}[scale=0.75]
            \fill[fill=gray!20] (2.5,1.5) -- (2,-2) -- (-1,-2.5) -- (-2,1) -- (0.5,3) -- (2.5,1.5);
            \draw[thick] (0,0) -- (2.5,1.5);
            \draw[thick] (0,0) -- (2,-2);
            \draw[thick] (0,0) -- (-1,-2.5);
            \draw[thick] (0,0) -- (-2,1);
            \draw[thick] (0,0) -- (0.5,3);
            \draw[thick] (2.5,1.5) -- (2,-2) -- (-1,-2.5) -- (-2,1) -- (0.5,3) -- (2.5,1.5);
            \draw[fill=black] (0,0) circle (0.1);
            \node at (0.45,-0.01) {$\VEC{a}$};
        \end{tikzpicture}
    \end{subfigure}
    \hfill
    \begin{subfigure}{0.45\textwidth}
        \begin{tikzpicture}[scale=0.75]
            \fill[fill=gray!20] (-3,0) -- (-1.5,2.5) -- (1,3) -- (3,2) -- (3.5,0) -- (-3,0);
            \draw[thick] (0,0) -- (-1.5,2.5);
            \draw[thick] (0,0) -- (1,3);
            \draw[thick] (0,0) -- (3,2);
            \draw[thick] (-3,0) -- (-1.5,2.5) -- (1,3) -- (3,2) -- (3.5,0) -- (-3,0);
            \draw[fill=black] (0,0) circle (0.1);
            \node at (0,-0.35) {$\VEC{a}$};
        \end{tikzpicture}
    \end{subfigure}
    \caption{Illustration of a patch $\omega_{\VEC{a}}$ around an inner node $\VEC{a}\in\mathcal{V}_h^i$ (\emph{left}), and around a boundary node $\VEC{a}\in\mathcal{V}_h^b$ (\emph{right}).}
    \label{fig:illustration patch}
\end{figure}

Following the path of \cite{Botti2018}, we construct $\VEC{\sigma}_h$ patchwise around the mesh vertices using Arnold--Falk--Winther mixed finite element spaces \cite{Arnold2007}, which are based on a stress tensor constructed in the Brezzi--Douglas--Marini space (see, e.g., \cite[Chapter 3]{Brezzi1991} and \cite[Chapter 2]{Boffi2013}), along with Lagrange multipliers that enforce a weak symmetry constraint.
From this point on, $\mathcal{V}_h^D$ will denote the set collecting all mesh vertices which lie on some Dirichlet boundary face. Notice that $\mathcal{V}_h^D$ also contains the vertices lying at the intersection between $\Gamma_D$ and $\Gamma_{\bullet}$, $\bullet\in\{N, C\}$.

For any element $T\in\mathcal{T}_h$, and any integer $q\geq 1$, we set
\[
\VEC{\Sigma}_T \coloneqq \MAT{P}^q(T),\qquad
\VEC{U}_T \coloneqq \VEC{\mathcal{P}}^{q-1}(T),\qquad
\VEC{\Lambda}_T \coloneqq \left\{\VEC{\mu}\in\MAT{P}^{q-1}(T) \ :\ \VEC{\mu}=-\VEC{\mu}^T \right\}.
\]
At the global level, we define the following spaces
\[
\begin{aligned}
  \VEC{\Sigma}_h &\coloneqq \left\{\VEC{\tau}_h \in \MAT{H}(\textbf{div},\Omega) \ :\ \VEC{\tau}_h|_{T} \in \VEC{\Sigma}_T \ \text{for any}\ T\in\mathcal{T}_h\right\},\\
  \VEC{U}_h &\coloneqq \left\{\VEC{v}_h \in \VEC{L}^2(\Omega) \ :\ \VEC{v}_h|_T\in\VEC{U}_T \ \text{for any}\ T\in\mathcal{T}_h \right\},\\
  \VEC{\Lambda}_h &\coloneqq \left\{\VEC{\mu}_h \in \MAT{L}^2(\Omega) \ :\ \VEC{\mu}_h|_T \in \VEC{\Lambda}_T \ \text{for any}\ T\in\mathcal{T}_h \right\}.
\end{aligned}
\]
Notice that $\VEC{\Sigma}_h \subset \MAT{H}(\textbf{div},\Omega)$ implies that its elements have continuous normal components across interfaces \cite[Lemma 1.17]{DiPietro2019}.
Now, let $q=p$, let $\VEC{u}_h$ be the solution of \eqref{Nitsche-based_method theta=0}, and fix a mesh vertex $\VEC{a}$. We denote by $\omega_{\VEC{a}}$ the patch around the node $\VEC{a}$, see Figure \ref{fig:illustration patch}, by $\VEC{n}_{\omega_{\VEC{a}}}$ the normal unit outward vector on its boundary $\partial\omega_{\VEC{a}}$
and by $\psi_{\VEC{a}}$ the hat function associated with $\VEC{a}$.
On any patch $\omega_{\VEC{a}}$ we then define  the following spaces:
\begin{align}
  \VEC{\Sigma}_h^{\VEC{a}} &\coloneqq \begin{cases}
    \big\{
    \VEC{\tau}_h \in\VEC{\Sigma}_h(\omega_{\VEC{a}}) \ :\  
    \text{$\VEC{\tau}_h \VEC{n}_{\omega_{\VEC{a}}} = \VEC{0}$ on $\partial\omega_{\VEC{a}}\setminus\Gamma_D$}
    \big\}
    &\text{if $\VEC{a}\in\mathcal{V}_h^b$}
    \\
    \big\{
    \VEC{\tau}_h \in\VEC{\Sigma}_h(\omega_{\VEC{a}})\ :\
    \text{%
      $\VEC{\tau}_h \VEC{n}_{\omega_{\VEC{a}}} = \VEC{0}$ on $\partial\omega_{\VEC{a}}$
    }
   	\big\}
   	& \text{otherwise},
  \end{cases}\label{eq:Sigmaha} \\
  \VEC{\Sigma}_{h,N,C}^{\VEC{a}} &\coloneqq \begin{cases}
    \begin{aligned}
      &\big\{\VEC{\tau}_h\in\VEC{\Sigma}_h(\omega_{\VEC{a}}) \ :\ \VEC{\tau}_h \VEC{n}_{\omega_{\VEC{a}}} = \VEC{0} \ \text{on}\ \partial\omega_{\VEC{a}}\setminus\partial\Omega,\\
      &\quad \VEC{\tau}_h \VEC{n}_{\omega_{\VEC{a}}} = \Pi_{\VEC{\Sigma}_h \VEC{n}_{\omega_{\VEC{a}}}} \left(\psi_{\VEC{a}} \VEC{g}_N \right) \ \text{on}\ \partial\omega_{\VEC{a}}\cap\Gamma_N, \ \text{and}\\
      &\qquad \VEC{\tau}_h \VEC{n}_{\omega_{\VEC{a}}} = \Pi_{\VEC{\Sigma}_h \VEC{n}_{\omega_{\VEC{a}}}} \bigl(\psi_{\VEC{a}} \left[P_{1,\gamma}^n(\VEC{u}_h)\right]_{\mathbb{R}^-} \VEC{n}\bigr) \ \text{on}\ \partial\omega_{\VEC{a}}\cap\Gamma_C \big\},
    \end{aligned} &
    \text{if $\VEC{a}\in\mathcal{V}_h^b$}
    \\
    \VEC{\Sigma}_h^{\VEC{a}} & \text{otherwise}
  \end{cases} \label{eq:definition Sigma h,N,C}
  \\
  \VEC{U}_h^{\VEC{a}} &\coloneqq \begin{cases}
    \VEC{U}_h(\omega_{\VEC{a}}) &
    \text{if $\VEC{a} \in \mathcal{V}_h^D$}
    \\
    \left\{\VEC{v}_h \in\VEC{U}_h(\omega_{\VEC{a}}) \ :\ (\VEC{v}_h,\VEC{z})_{\omega_{\VEC{a}}} = 0 \ \text{for any}\ \VEC{z}\in\VEC{RM}^d\right\} &
    \text{otherwise},
 	\end{cases}\nonumber\\
  \VEC{\Lambda}_h^{\VEC{a}} &\coloneqq \VEC{\Lambda}_h(\omega_{\VEC{a}})\nonumber.
\end{align}
Above, $\VEC{\Sigma}_h(\omega_{\VEC{a}})$, $\VEC{U}_h(\omega_{\VEC{a}})$, and $\VEC{\Lambda}_h(\omega_{\VEC{a}})$ denote the restrictions of the spaces $\VEC{\Sigma}_h$, $\VEC{U}_h$ and $\VEC{\Lambda}_h$ to the subdomain $\omega_{\VEC{a}}$, respectively. Moreover, $\VEC{\Sigma}_h\VEC{n}_{\omega_{\VEC{a}}}$ is the space of normal traces on the patch boundary $\partial\omega_{\VEC{a}}$ of elements in $\VEC{\Sigma}_h(\omega_{\VEC{a}})$, i.e., it is the space of vector-valued broken polynomials of total degree $\le p$ on the set of boundary faces of the patch, while $\VEC{RM}^d$ is space of rigid-body motions, i.e., $\VEC{RM}^2 \coloneqq \left\{\VEC{b}+c(x_2,-x_1)^{\top} \ :\ \VEC{b}\in\mathbb{R}^2, c\in\mathbb{R}\right\}$ and $\VEC{RM}^3 \coloneqq \left\{\VEC{b}+\VEC{c}\times\VEC{x} \ :\ \VEC{b},\VEC{c}\in\mathbb{R}^3\right\}$.

\begin{remark}[Boundary condition for the reconstruction on internal vertices]
	In the definition \eqref{eq:Sigmaha} of $\VEC{\Sigma}_h^{\VEC{a}}$, we distinguish between boundary and internal vertices in order to ensure, in the case $\VEC{a}\in\mathcal{V}_h^i$, zero normal components on the whole boundary of the patch $\omega_{\VEC{a}}$ even if $\lvert \partial\omega_{\VEC{a}} \cap \Gamma_D\rvert > 0$.  
\end{remark}

\begin{construction}[Basic equilibrated stress reconstruction]\label{reconstruction1}
  Let, for any vertex $\VEC{a}\in\mathcal{V}_h$, $(\VEC{\sigma}_h^{\VEC{a}},\VEC{r}_h^{\VEC{a}},\VEC{\lambda}_h^{\VEC{a}}) \in \VEC{\Sigma}_{h,N,C}^{\VEC{a}} \times \VEC{U}_h^{\VEC{a}} \times \VEC{\Lambda}_h^{\VEC{a}}$ be the solution to the following problem:
\begin{subequations}\label{eq - reconstruction1}
    \begin{alignat}{2}
      (\VEC{\sigma}_h^{\VEC{a}},\VEC{\tau}_h)_{\omega_{\VEC{a}}} + (\VEC{r}_h^{\VEC{a}},\VEC{\rm div}\, 
      \VEC{\tau}_h)_{\omega_{\VEC{a}}} + (\VEC{\lambda}_h^{\VEC{a}},\VEC{\tau}_h)_{\omega_{\VEC{a}}} &= (\psi_{\VEC{a}}\VEC{\sigma}(\VEC{u}_h),\VEC{\tau}_h)_{\omega_{\VEC{a}}}
      &\quad& \forall\VEC{\tau}_h\in\VEC{\Sigma}_h^{\VEC{a}},\\ 
      (\VEC{\rm div}\, 
      \VEC{\sigma}_h^{\VEC{a}},\VEC{v}_h)_{\omega_{\VEC{a}}} &= (-\psi_{\VEC{a}} \VEC{f} + \VEC{\sigma}(\VEC{u}_h)\VEC{\nabla}\psi_{\VEC{a}}, \VEC{v}_h)_{\omega_{\VEC{a}}}
      &\quad& \forall\VEC{v}_h\in\VEC{U}_h^{\VEC{a}}, \label{second_equation Reconstruction 1} \\ 
      (\VEC{\sigma}_h^{\VEC{a}},\VEC{\mu}_h)_{\omega_{\VEC{a}}} &= 0
      &\quad& \forall\VEC{\mu}_h \in \VEC{\Lambda}_h^{\VEC{a}}. \label{third_equation Reconstruction 1}
    \end{alignat}
\end{subequations}
Extending $\VEC{\sigma}_h^{\VEC{a}}$ by zero outside the patch $\omega_{\VEC{a}}$, we set $\VEC{\sigma}_h\coloneqq\sum_{\VEC{a}\in\mathcal{V}_h} \VEC{\sigma}_h^{\VEC{a}}$.
\end{construction}

By definition of the space $\VEC{\Sigma}_{h,N,C}^{\VEC{a}}$, a homogeneous Neumann 
boundary condition is enforced on the whole boundary of $\omega_{\VEC{a}}$ for interior vertices and on $\partial \omega_{\VEC{a}} \setminus \partial\Omega$ for boundary vertices. In particular, for boundary vertices in $\mathcal{V}_h^b\setminus\mathcal{V}_h^D$, a possibly non homogeneous Neumann boundary condition is enforced on the boundary faces of the patch.
Therefore, when $\VEC{a}\in\mathcal{V}_h^i$ or $\VEC{a}\in\mathcal{V}_h^b\setminus\mathcal{V}_h^D$, the right hand side of \eqref{second_equation Reconstruction 1} has to verify the following Neumann compatibility condition:
\begin{multline}\label{Neumann_compatibility_condition1}
  (-\psi_{\VEC{a}} \VEC{f} + \VEC{\sigma}(\VEC{u}_h)\VEC{\nabla}\psi_{\VEC{a}}, \VEC{z})_{\omega_{\VEC{a}}} \\
  = \left(\Pi_{\VEC{\Sigma}_h \VEC{n}_{\omega_{\VEC{a}}}} \left(\psi_{\VEC{a}} \VEC{g}_N \right),\VEC{z}\right)_{\partial\omega_{\VEC{a}}\cap \Gamma_N} + \Bigl(\Pi_{\VEC{\Sigma}_h \VEC{n}_{\omega_{\VEC{a}}}} \left(\psi_{\VEC{a}} \left[P_{1,\gamma}^n(\VEC{u}_h)\right]_{\mathbb{R}^-} \VEC{n}\Bigr), \VEC{z}\right)_{\partial\omega_{\VEC{a}}\cap \Gamma_C}
\end{multline}
for any $\VEC{z}\in\VEC{RM}^d$. Fixing a rigid-body motion $\VEC{z}$, it is possible to check that \eqref{Neumann_compatibility_condition1} holds by taking $\psi_{\VEC{a}} \VEC{z}$ as test function in \eqref{Nitsche-based_method theta=0}. 
The following Lemma lists the main properties of the tensor $\VEC{\sigma}_h$ resulting from Construction \ref{reconstruction1}. In particular, it shows that $\VEC{\sigma}_h$ satisfies all the conditions of Definition \ref{definition - equilibrated stress reconstruction}, i.e., it is an equilibrated stress reconstruction.

\begin{lemma}[Properties of $\VEC{\sigma}_h$]\label{lemma - properties of reconstruction1}
  Let $\VEC{\sigma}_h$ be defined by Construction \ref{reconstruction1}. Then, it holds
  \begin{enumerate}
  \item $\VEC{\sigma}_h\in\MAT{H}(\emph{\textbf{div}},\Omega)$;
  \item For every $T\in\mathcal{T}_h$ and every $\VEC{v}_T\in \VEC{\mathcal{P}}^{p-1}(T)$, $(\VEC{\rm div}\, 
  \VEC{\sigma}_h+ \VEC{f},\VEC{v}_T)_T=0$;
  \item For every $F\in\mathcal{F}_h^N$ and every $\VEC{v}_F\in\VEC{\mathcal{P}}^p(F)$, $(\VEC{\sigma}_h\VEC{n},\VEC{v}_F)_F=(\VEC{g}_N,\VEC{v}_F)_F$;
  \item For every $F\in\mathcal{F}_h^C$ and every $\VEC{v}_F\in\VEC{\mathcal{P}}^p(F)$,
  \[
  (\VEC{\sigma}_h\VEC{n},\VEC{v}_F)_F=\left(\left[P_{1,\gamma}^n(\VEC{u}_h)\right]_{\mathbb{R}^-} \VEC{n},\VEC{v}_F\right)_F = \left(\left[P_{1,\gamma}^n(\VEC{u}_h)\right]_{\mathbb{R}^-}, v_F^n\right)_F.
  \]
  \end{enumerate}
\end{lemma}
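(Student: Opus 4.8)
The plan is to check the four properties separately, leaning on the partition of unity $\sum_{\VEC{a}\in\mathcal{V}_h}\psi_{\VEC{a}}\equiv1$ together with the three local equations in \eqref{eq - reconstruction1}. I would first record two bookkeeping facts used repeatedly: on a mesh element $T$ only the patches with $\VEC{a}\in\mathcal{V}_T$ contribute, so $\VEC{\sigma}_h|_T=\sum_{\VEC{a}\in\mathcal{V}_T}\VEC{\sigma}_h^{\VEC{a}}|_T$; and on a boundary face $F\subset\partial\Omega$ one has $(\VEC{\sigma}_h\VEC{n})|_F=\sum_{\VEC{a}\in\mathcal{V}_h}(\VEC{\sigma}_h^{\VEC{a}}\VEC{n})|_F$, where all summands with $\VEC{a}\notin\mathcal{V}_F$ vanish because $\psi_{\VEC{a}}|_F=0$. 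I would also recall at the outset that Construction \ref{reconstruction1} is well posed: for the vertices carrying a pure Neumann-type condition this rests on the compatibility condition \eqref{Neumann_compatibility_condition1}, which follows by testing \eqref{Nitsche-based_method theta=0} with $\psi_{\VEC{a}}\VEC{z}$, $\VEC{z}\in\VEC{RM}^d$, and using the symmetry of $\VEC{\sigma}(\VEC{u}_h)$ to annihilate the skew-symmetric part of $\VEC{\nabla}(\psi_{\VEC{a}}\VEC{z})$.

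For Property 1, each $\VEC{\sigma}_h^{\VEC{a}}$ lies in $\VEC{\Sigma}_h(\omega_{\VEC{a}})\subset\MAT{H}(\textbf{div},\omega_{\VEC{a}})$ and hence has continuous normal trace across the interior faces of $\omega_{\VEC{a}}$; by the definition \eqref{eq:definition Sigma h,N,C} of $\VEC{\Sigma}_{h,N,C}^{\VEC{a}}$ its normal trace moreover vanishes on every face of $\partial\omega_{\VEC{a}}$ interior to $\Omega$. Thus the zero-extension of $\VEC{\sigma}_h^{\VEC{a}}$ to $\Omega$ has continuous normal trace across every interior face of $\mathcal{T}_h$, so it belongs to $\MAT{H}(\textbf{div},\Omega)$ by the standard criterion (see, e.g., \cite[Lemma 1.17]{DiPietro2019}); a finite sum of such fields is again in $\MAT{H}(\textbf{div},\Omega)$.

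For Property 2, I would fix $T$ and $\VEC{v}_T\in\VEC{\mathcal{P}}^{p-1}(T)$, extended by zero to an element of $\VEC{U}_h(\omega_{\VEC{a}})$ for each $\VEC{a}\in\mathcal{V}_T$, and prove the elementwise identity $(\VEC{\rm div}\,\VEC{\sigma}_h^{\VEC{a}},\VEC{v}_T)_T=(-\psi_{\VEC{a}}\VEC{f}+\VEC{\sigma}(\VEC{u}_h)\VEC{\nabla}\psi_{\VEC{a}},\VEC{v}_T)_T$. When $\VEC{a}\in\mathcal{V}_h^D$ the extension is admissible in \eqref{second_equation Reconstruction 1} and the identity is immediate. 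When $\VEC{a}\notin\mathcal{V}_h^D$ I would split the test function into its $\VEC{U}_h^{\VEC{a}}$-component — on which \eqref{second_equation Reconstruction 1} applies — and its rigid-body-motion component; for the latter I would integrate $(\VEC{\rm div}\,\VEC{\sigma}_h^{\VEC{a}},\VEC{z})_{\omega_{\VEC{a}}}$ by parts, discard the volume term using the weak-symmetry equation \eqref{third_equation Reconstruction 1} (since $\VEC{\nabla}\VEC{z}$ is skew-symmetric), and identify the boundary term through the prescribed normal trace of $\VEC{\sigma}_h^{\VEC{a}}$ and the compatibility condition \eqref{Neumann_compatibility_condition1}. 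Summing the elementwise identity over $\VEC{a}\in\mathcal{V}_T$ and invoking $\sum_{\VEC{a}\in\mathcal{V}_T}\psi_{\VEC{a}}\equiv1$ and $\sum_{\VEC{a}\in\mathcal{V}_T}\VEC{\nabla}\psi_{\VEC{a}}\equiv\VEC{0}$ on $T$ then yields $(\VEC{\rm div}\,\VEC{\sigma}_h+\VEC{f},\VEC{v}_T)_T=0$.

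Finally, for Properties 3 and 4, let $F\in\mathcal{F}_h^N$ (resp. $F\in\mathcal{F}_h^C$); by mesh conformity $F$ lies entirely in $\Gamma_N$ (resp. $\Gamma_C$), so for each $\VEC{a}$ the definition \eqref{eq:definition Sigma h,N,C} forces $(\VEC{\sigma}_h^{\VEC{a}}\VEC{n})|_F$ to be the $L^2(F)$-projection onto $\VEC{\mathcal{P}}^p(F)$ of $\psi_{\VEC{a}}\VEC{g}_N|_F$ (resp. of $\psi_{\VEC{a}}[P_{1,\gamma}^n(\VEC{u}_h)]_{\mathbb{R}^-}\VEC{n}|_F$). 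Summing over $\VEC{a}$ and using $\sum_{\VEC{a}\in\mathcal{V}_F}\psi_{\VEC{a}}\equiv1$ on $F$ shows that $(\VEC{\sigma}_h\VEC{n})|_F$ equals the $L^2(F)$-projection onto $\VEC{\mathcal{P}}^p(F)$ of $\VEC{g}_N|_F$ (resp. of $[P_{1,\gamma}^n(\VEC{u}_h)]_{\mathbb{R}^-}\VEC{n}|_F$); testing against $\VEC{v}_F\in\VEC{\mathcal{P}}^p(F)$ and using that this projection is self-adjoint and fixes $\VEC{v}_F$ gives Properties 3 and 4, the identity $([P_{1,\gamma}^n(\VEC{u}_h)]_{\mathbb{R}^-}\VEC{n},\VEC{v}_F)_F=([P_{1,\gamma}^n(\VEC{u}_h)]_{\mathbb{R}^-},v_F^n)_F$ being immediate. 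Since the face normal is constant and $[P_{1,\gamma}^n(\VEC{u}_h)]_{\mathbb{R}^-}\VEC{n}$ is purely normal, the same computation shows $(\VEC{\sigma}_h\VEC{n})|_F$ is purely normal, i.e. $\VEC{\sigma}_h^{\VEC{t}}|_{\Gamma_C}=\VEC{0}$, completing the verification that $\VEC{\sigma}_h$ meets Definition \ref{definition - equilibrated stress reconstruction}. I expect the main obstacle to be Property 2 at the vertices subject to the rigid-body-motion constraint, where the divergence identity cannot be read off directly from \eqref{second_equation Reconstruction 1} and must instead be pieced together from the weak-symmetry equation and the Neumann compatibility condition.
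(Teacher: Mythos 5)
Your proof is correct and follows essentially the same route as the paper's: zero-extension for the $\MAT{H}(\textbf{div},\Omega)$-conformity, first establishing that \eqref{second_equation Reconstruction 1} extends to all of $\VEC{U}_h(\omega_{\VEC{a}})$ before summing over $\VEC{a}\in\mathcal{V}_T$ with the partition of unity, and face-by-face summation of the $L^2$-projections for the Neumann and contact properties. The only difference is one of detail: where the paper merely lists the ingredients (Green's formula, the weak-symmetry equation \eqref{third_equation Reconstruction 1}, the compatibility condition \eqref{Neumann_compatibility_condition1}) for recovering the rigid-body component at vertices $\VEC{a}\notin\mathcal{V}_h^D$, you carry out the computation explicitly, which is exactly what the paper intends.
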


\begin{proof}
  1) By definition, $\VEC{\sigma}_h^{\VEC{a}}\in\MAT{H}(\textbf{div},\omega_{\VEC{a}})$ for any $\VEC{a}\in\mathcal{V}_h$. Due to the no-flux boundary condition on internal faces enforced in the local problem on $\omega_{\VEC{a}}$, the extension of $\VEC{\sigma}_h^{\VEC{a}}$ by zero outside the patch is in $\MAT{H}(\textbf{div},\Omega)$ and, as a consequence, $\VEC{\sigma}_h\in\MAT{H}(\textbf{div},\Omega)$.
  \medskip\\
  2) First, we check that, for any $\VEC{a}\in \mathcal{V}_h$, equation \eqref{second_equation Reconstruction 1} holds for every $\VEC{v}_h\in \VEC{U}_h(\omega_{\VEC{a}})$.
  If $\VEC{a}\in\mathcal{V}_h^D$, this is trivial since $\VEC{U}_h^{\VEC{a}} = \VEC{U}_h(\omega_{\VEC{a}})$.
  If, on the other hand, $\VEC{a}\in\mathcal{V}_h\setminus\mathcal{V}_h^D$, it is sufficient to use the fact that $\VEC{U}_h^{\VEC{a}}=(\VEC{RM}^d)^\bot$ (with orthogonal taken with respect to the $\VEC{L}^2(\omega_{\VEC{a}})$-product) along with the Green formula, the definition \eqref{eq:definition Sigma h,N,C} of $\VEC{\Sigma}_h^{\VEC{a}}$, the Neumann compatibility condition \eqref{Neumann_compatibility_condition1}, and \eqref{third_equation Reconstruction 1}.
  
  Now, fix $T\in\mathcal{T}_h$ and let $\VEC{v}_T \in \VEC{\mathcal{P}}^{p-1}(T)$.
  Extending $\VEC{v}_T$ by zero outside of $T$, we have $\VEC{v}_T\in\VEC{U}_h(\omega_{\VEC{a}})$ for all $\VEC{a}\in\mathcal{V}_T$. Indeed, by definition, $\VEC{U}_h(\omega_{\VEC{a}})$ is composed by piecewise polynomials of degree at most $p-1$ that can be chosen independently inside each element of the patch.
  Summing \eqref{second_equation Reconstruction 1} over $\VEC{a}\in \mathcal{V}_T$ we obtain:
  \begin{equation*}
    \begin{split}
      0 &=\sum_{\VEC{a}\in\mathcal{V}_T} \Bigl[\left(\VEC{\rm div}\, 
      \VEC{\sigma}_h^{\VEC{a}}, \VEC{v}_T \right)_{\omega_{\VEC{a}}} + \left(\psi_{\VEC{a}} \VEC{f}, \VEC{v}_T \right)_{\omega_{\VEC{a}}} - \left( \VEC{\sigma}(\VEC{u}_h) \VEC{\nabla} \psi_{\VEC{a}}, \VEC{v}_T\right)_{\omega_{\VEC{a}}}\Bigr] = \left( \VEC{\rm div}\, 
      \VEC{\sigma}_h + \VEC{f}, \VEC{v}_T\right)_{T}.
    \end{split}
  \end{equation*}
  Here, we have used the fact that $\VEC{\sigma}_h|_T = \sum_{\VEC{a}\in\mathcal{V}_T} \VEC{\sigma}_h^{\VEC{a}}|_T$ and $\sum_{\VEC{a}\in\mathcal{V}_T} \psi_{\VEC{a}} = 1$ over $T$ (so that, in particular, $\sum_{\VEC{a}\in\mathcal{V}_T}\VEC{\nabla}\psi_{\VEC{a}}\equiv 0$).
  \medskip \\
  3,4) We only detail the proof of 3) as that of 4) is similar.
  Let $F\in\mathcal{F}_h^N$ and let $\VEC{v}_F$ be a polynomial defined on $F$ from the discrete normal trace space $(\VEC{\Sigma}_h\VEC{n})|_{F}$, i.e., a polynomial of total degree at most $p$. Then, by the definition of $\VEC{\Sigma}_{h,N,C}^{\VEC{a}}$ \eqref{eq:definition Sigma h,N,C},
  \begin{equation*}
    \left(\VEC{\sigma}_h\VEC{n}, \VEC{v}_F\right)_F = \sum_{\VEC{a}\in\mathcal{V}_F} \left(\VEC{\sigma}_h^{\VEC{a}}\VEC{n}, \VEC{v}_F\right)_F = \sum_{\VEC{a}\in\mathcal{V}_F} \left(\psi_{\VEC{a}}\VEC{g}_N,\VEC{v}_F\right)_F = \left(\VEC{g}_N,\VEC{v}_F\right)_F.\qedhere
  \end{equation*}
\end{proof}

\subsection{Stress reconstruction distinguishing the error components}\label{sec - reconstruction - error decomposition}

In Construction \ref{reconstruction1} we used the solution $\VEC{u}_h$ of the nonlinear problem \eqref{Nitsche-based_method theta=0} to reconstruct an equilibrated stress $\VEC{\sigma}_h$.
However, as argued in Section \ref{sec:identification.error.components}, in practice we only dispose of an approximated solution obtained by means of a linearization method. 
Let $k\geq 1$ be an integer and let $\VEC{u}_h^k$ be the solution of the linearized problem \eqref{Linear approximated problem} with operator $P_{\text{lin},\delta}(\,\cdot\,)$ defined by \eqref{P_lin,delta definition}. 
Then, for any boundary vertex $\VEC{a}\in\mathcal{V}_h^b$, we set
\begin{align*}
    \begin{split}
       \VEC{\Sigma}_{h,N,C,\rm dis}^{\VEC{a},k} &\coloneqq \{\VEC{\tau}_h\in\VEC{\Sigma}_h(\omega_{\VEC{a}}) \ :\ \VEC{\tau}_h \VEC{n}_{\omega_{\VEC{a}}} = \VEC{0} \ \text{on}\ \partial\omega_{\VEC{a}}\setminus\partial\Omega,\\
        &\hspace{1.8cm} \VEC{\tau}_h \VEC{n}_{\omega_{\VEC{a}}} = \Pi_{\VEC{\Sigma}_h \VEC{n}_{\omega_{\VEC{a}}}} \left(\psi_{\VEC{a}} \VEC{g}_N \right) \ \text{on}\ \partial\omega_{\VEC{a}}\cap\Gamma_N \ \text{and}\\
        &\hspace{3.6cm} \VEC{\tau}_h \VEC{n}_{\omega_{\VEC{a}}} = \Pi_{\VEC{\Sigma}_h \VEC{n}_{\omega_{\VEC{a}}}} \left(\psi_{\VEC{a}} \left[P_{1,\gamma}^n(\VEC{u}_h^k)\right]_{\mathbb{R}^-} \VEC{n}\right) \ \text{on}\ \partial\omega_{\VEC{a}}\cap\Gamma_C \},
    \end{split}
    \\
    \begin{split}
       \VEC{\Sigma}_{h,N,C,\rm reg}^{\VEC{a},k} &\coloneqq \{\VEC{\tau}_h\in\VEC{\Sigma}_h(\omega_{\VEC{a}}) \ :\ \VEC{\tau}_h \VEC{n}_{\omega_{\VEC{a}}} = \VEC{0} \ \text{on}\ \partial\omega_{\VEC{a}}\setminus\partial\Omega\ \text{and on}\ \partial\omega_{\VEC{a}}\cap\Gamma_N, \ \text{and}\\
        &\hspace{0.8cm} \VEC{\tau}_h \VEC{n}_{\omega_{\VEC{a}}} = \Pi_{\VEC{\Sigma}_h \VEC{n}_{\omega_{\VEC{a}}}} \left(\psi_{\VEC{a}} \left(\left[P_{1,\gamma}^n(\VEC{u}_h^k)\right]_{\text{reg},\delta} - \left[P_{1,\gamma}^n(\VEC{u}_h^k)\right]_{\mathbb{R}^-}\right) \VEC{n}\right) \ \text{on}\ \partial\omega_{\VEC{a}}\cap\Gamma_C \},
    \end{split}
    \\
    \begin{split}
       \VEC{\Sigma}_{h,N,C,\rm lin}^{\VEC{a},k} &\coloneqq \{\VEC{\tau}_h\in\VEC{\Sigma}_h(\omega_{\VEC{a}}) \ :\ \VEC{\tau}_h \VEC{n}_{\omega_{\VEC{a}}} = \VEC{0} \ \text{on}\ \partial\omega_{\VEC{a}}\setminus\partial\Omega\ \text{and on}\ \partial\omega_{\VEC{a}}\cap\Gamma_N, \ \text{and}\\
        &\hspace{1.3cm} \VEC{\tau}_h \VEC{n}_{\omega_{\VEC{a}}} = \Pi_{\VEC{\Sigma}_h \VEC{n}_{\omega_{\VEC{a}}}} \left(\psi_{\VEC{a}} \left(P_{\text{lin}}^{k-1}(\VEC{u}_h^k) - \left[P_{1,\gamma}^n(\VEC{u}_h^k)\right]_{\text{reg},\delta}\right) \VEC{n}\right) \ \text{on}\ \partial\omega_{\VEC{a}}\cap\Gamma_C \},
    \end{split}
\end{align*}
and, for any internal vertex $\VEC{a}\in\mathcal{V}_h^i$, $\VEC{\Sigma}_{h,N,C,\bullet}^{\VEC{a},k} \coloneqq \VEC{\Sigma}_h^{\VEC{a}}$ (see \eqref{eq:Sigmaha}) for $\bullet\in\{\rm dis,\rm reg,\rm lin\}$.
Moreover, let $\VEC{y}^k, \tilde{\VEC{y}}^k \in \VEC{RM}^d$ be such that, for all $\VEC{z}\in\VEC{RM}^d$,
\[
\begin{aligned}
  (\VEC{y}^k, \VEC{z})_{\omega_{\VEC{a}}}
  &= (-\psi_{\VEC{a}} \VEC{f} + \VEC{\sigma}(\VEC{u}_h^k)\VEC{\nabla}\psi_{\VEC{a}}, \VEC{z})_{\omega_{\VEC{a}}} - \left(\Pi_{\VEC{\Sigma}_h \VEC{n}_{\omega_{\VEC{a}}}} \left(\psi_{\VEC{a}} \VEC{g}_N \right),\VEC{z}\right)_{\partial\omega_{\VEC{a}}\cap \Gamma_N} 
  \\
  &\quad - \Bigl(\Pi_{\VEC{\Sigma}_h \VEC{n}_{\omega_{\VEC{a}}}} \left(\psi_{\VEC{a}} \left[P_{1,\gamma}^n(\VEC{u}_h^k)\right]_{\mathbb{R}^-} \VEC{n}\Bigr), \VEC{z}\right)_{\partial\omega_{\VEC{a}}\cap \Gamma_C},
  \\
  (\tilde{\VEC{y}}^k, \VEC{z})_{\omega_{\VEC{a}}}
  &= \left(\Pi_{\VEC{\Sigma}_h \VEC{n}_{\omega_{\VEC{a}}}} \left(\psi_{\VEC{a}} \left(\left[P_{1,\gamma}^n(\VEC{u}_h^k)\right]_{\mathbb{R}^-} - \left[P_{1,\gamma}^n(\VEC{u}_h^k)\right]_{\text{reg},\delta}\right) \VEC{n}\right), \VEC{z}\right)_{\partial\omega_{\VEC{a}}\cap\Gamma_C}
\end{aligned}
\]
if $\VEC{a} \in\mathcal{V}_h^b$, and $\VEC{y}^{k} = \tilde{\VEC{y}}^{k} = \VEC{0}$ if $\VEC{a}\in\mathcal{V}_h^i$.

\begin{construction}[Equilibrated stress reconstruction distinguishing the error components]\label{reconstruction2}
  Let, for $\bullet \in \{\rm dis, reg, lin\}$ and any vertex $\VEC{a}\in\mathcal{V}_h$, $(\VEC{\sigma}_{h,\bullet}^{\VEC{a},k},\VEC{r}_{h,\bullet}^{\VEC{a},k},\VEC{\lambda}_{h,\bullet}^{\VEC{a},k}) \in \VEC{\Sigma}_{h,N,C,\bullet}^{\VEC{a},k} \times \VEC{U}_h^{\VEC{a}} \times \VEC{\Lambda}_h^{\VEC{a}}$ be the solution to the following problem:
  \[
  \begin{alignedat}{4}
    (\VEC{\sigma}_{h,\bullet}^{\VEC{a},k},\VEC{\tau}_h)_{\omega_{\VEC{a}}} + (\VEC{r}_{h,\bullet}^{\VEC{a},k},\VEC{\rm div}\, 
    \VEC{\tau}_h)_{\omega_{\VEC{a}}} + (\VEC{\lambda}_{h,\bullet}^{\VEC{a},k},\VEC{\tau}_h)_{\omega_{\VEC{a}}} &= (\VEC{\tau}_{h,\bullet}^{\VEC{a},k},\VEC{\tau}_h)_{\omega_{\VEC{a}}}
    & \qquad & \forall\VEC{\tau}_h\in\VEC{\Sigma}_h^{\VEC{a}},
    \\ 
    (\VEC{\rm div}\, 
    \VEC{\sigma}_{h,\bullet}^{\VEC{a},k},\VEC{v}_h)_{\omega_{\VEC{a}}} &= (\VEC{v}_{h,\bullet}^{\VEC{a},k}, \VEC{v}_h)_{\omega_{\VEC{a}}}
    & \qquad & \forall\VEC{v}_h\in\VEC{U}_h^{\VEC{a}},
    \\ 
    (\VEC{\sigma}_{h,\bullet}^{\VEC{a},k},\VEC{\mu}_h)_{\omega_{\VEC{a}}} &= 0
    & \qquad & \forall\VEC{\mu}_h \in \VEC{\Lambda}_h^{\VEC{a}},
  \end{alignedat}
  \]
  where
  \begin{equation*}
    \VEC{\tau}_{h,\bullet}^{\VEC{a},k} \coloneqq \begin{cases}
      \psi_{\VEC{a}} \VEC{\sigma}(\VEC{u}_h^k) & \text{if $\bullet = \rm dis$}, \\
      0 &  \text{if $\bullet \in \{\rm reg, lin\}$},
    \end{cases}
    \qquad
    \VEC{v}_{h,\bullet}^{\VEC{a},k} \coloneqq \begin{cases}
      -\psi_{\VEC{a}} \VEC{f} + \VEC{\sigma}(\VEC{u}_h^k)\VEC{\nabla}\psi_{\VEC{a}} - \VEC{y}^k &  \text{if $\bullet = \rm dis$},\\
      -\tilde{\VEC{y}}^k & \text{if $\bullet = \rm reg$}, \\
      \VEC{y}^k + \tilde{\VEC{y}}^k & \text{if $\bullet = \rm lin$}.
    \end{cases}
  \end{equation*}
  Extending $\VEC{\sigma}_{h,\bullet}^{\VEC{a},k}$ by zero outside the patch $\omega_{\VEC{a}}$, we set $\VEC{\sigma}_{h,\bullet}^k\coloneqq\sum_{\VEC{a}\in\mathcal{V}_h} \VEC{\sigma}_{h,\bullet}^{\VEC{a},k}$, and we define $\VEC{\sigma}_h^k \coloneqq \VEC{\sigma}_{h,\rm dis}^k + \VEC{\sigma}_{h,\rm reg}^k + \VEC{\sigma}_{h,\rm lin}^k$.
\end{construction}

By definition, $\VEC{y}^k$ and $\tilde{\VEC{y}}^k$ ensure that the forcing terms $\VEC{v}_{h,\bullet}^{\VEC{a},k}$ satisfy the following Neumann compatibility conditions for $\VEC{a}\in\mathcal{V}_h^b\setminus \mathcal{V}_h^D$:
\begin{gather*}
    (\VEC{v}_{h,\rm dis}^{\VEC{a},k}, \VEC{z})_{\omega_{\VEC{a}}} = \left(\Pi_{\VEC{\Sigma}_h \VEC{n}_{\omega_{\VEC{a}}}} \left(\psi_{\VEC{a}} \VEC{g}_N \right),\VEC{z}\right)_{\partial\omega_{\VEC{a}}\cap \Gamma_N} + \Bigl(\Pi_{\VEC{\Sigma}_h \VEC{n}_{\omega_{\VEC{a}}}} \left(\psi_{\VEC{a}} \left[P_{1,\gamma}^n(\VEC{u}_h^k)\right]_{\mathbb{R}^-} \VEC{n}\Bigr), \VEC{z}\right)_{\partial\omega_{\VEC{a}}\cap \Gamma_C},
    \\
    (\VEC{v}_{h,\rm reg}^{\VEC{a},k}, \VEC{z})_{\omega_{\VEC{a}}} =  \left(\Pi_{\VEC{\Sigma}_h \VEC{n}_{\omega_{\VEC{a}}}} \left(\psi_{\VEC{a}} \left(\left[P_{1,\gamma}^n(\VEC{u}_h^k)\right]_{\text{reg},\delta} - \left[P_{1,\gamma}^n(\VEC{u}_h^k)\right]_{\mathbb{R}^-}\right) \VEC{n}\right), \VEC{z}\right)_{\partial\omega_{\VEC{a}}\cap\Gamma_C},
    \\
    (\VEC{v}_{h,\rm lin}^{\VEC{a},k}, \VEC{z})_{\omega_{\VEC{a}}} = \left(\Pi_{\VEC{\Sigma}_h \VEC{n}_{\omega_{\VEC{a}}}} \left(\psi_{\VEC{a}} \left(P_{\text{lin}}^{k-1}(\VEC{u}_h^k) - \left[P_{1,\gamma}^n(\VEC{u}_h^k)\right]_{\text{reg},\delta}\right) \VEC{n}\right), \VEC{z}\right)_{\partial\omega_{\VEC{a}}\cap\Gamma_C}
\end{gather*}
for any $\VEC{z}\in\VEC{RM}^d$.
The obtained tensor $\VEC{\sigma}_h^k$ is an equilibrated stress reconstruction in the sense of Definition \ref{definition - equilibrated stress reconstruction}, and in particular it satisfies the properties stated by the following lemma whose proof is similar to that of Lemma \ref{lemma - properties of reconstruction1} and is therefore omitted for the sake of conciseness.

\begin{lemma}[Properties of $\VEC{\sigma}_h^k$]\label{lemma - properties of reconstruction2}
Let $\VEC{\sigma}_h^k$ be defined by Construction \ref{reconstruction2}. Then
\begin{enumerate}
	\item $\VEC{\sigma}_{h,\rm dis}^k, \VEC{\sigma}_{h,\rm reg}^k, \VEC{\sigma}_{h,\rm lin}^k, \VEC{\sigma}_h^k\in\MAT{H}(\emph{\textbf{div}},\Omega)$;
	\item For every $T\in\mathcal{T}_h$ and every $\VEC{v}_T\in \VEC{\mathcal{P}}^{p-1}(T)$, $(\VEC{\rm div}\, 
	\VEC{\sigma}_h^k+ \VEC{f},\VEC{v}_T)_T=0$;
    \item For every $F\in\mathcal{F}_h^N$ and every $\VEC{v}_F\in\VEC{\mathcal{P}}^p(F)$, $(\VEC{\sigma}_h^k\VEC{n},\VEC{v}_F)_F=(\VEC{g}_N,\VEC{v}_F)_F$;
    \item For every $F\in\mathcal{F}_h^C$ and every $\VEC{v}_F\in\VEC{\mathcal{P}}^p(F)$, 
    \[
    (\VEC{\sigma}_{h,\rm dis}^k \VEC{n},\VEC{v}_F)_F=\left(\left[P_{1,\gamma}^n(\VEC{u}_h^k)\right]_{\mathbb{R}^-} \VEC{n},\VEC{v}_F\right)_F, 
    \] 
    \[
    (\VEC{\sigma}_{h,\rm reg}^k \VEC{n},\VEC{v}_F)_F=\left(\left(\left[P_{1,\gamma}^n(\VEC{u}_h^k)\right]_{\rm reg,\delta} - \left[P_{1,\gamma}^n(\VEC{u}_h^k)\right]_{\mathbb{R}^-} \right) \VEC{n},\VEC{v}_F\right)_F,
    \]
    and 
    \[
    (\VEC{\sigma}_{h,\rm lin}^k \VEC{n},\VEC{v}_F)_F = \break \left(\left(P_{\rm lin}^{k-1}(\VEC{u}_h^k) - \left[P_{1,\gamma}^n(\VEC{u}_h^k)\right]_{\rm reg,\delta} \right) \VEC{n},\VEC{v}_F\right)_F.
    \] 
\end{enumerate}
\end{lemma}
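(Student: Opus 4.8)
The plan is to mirror the proof of Lemma~\ref{lemma - properties of reconstruction1}, handling the three local problems that define $\VEC{\sigma}_{h,\rm dis}^{\VEC{a},k}$, $\VEC{\sigma}_{h,\rm reg}^{\VEC{a},k}$, and $\VEC{\sigma}_{h,\rm lin}^{\VEC{a},k}$ in parallel and exploiting the cancellations that have been deliberately built into the data $\VEC{\tau}_{h,\bullet}^{\VEC{a},k}$ and $\VEC{v}_{h,\bullet}^{\VEC{a},k}$. First I would observe that each of the three patchwise mixed problems is well posed: the bilinear form is the standard Arnold--Falk--Winther one, and the only solvability requirement---that the right-hand side of the divergence equation be $\VEC{L}^2(\omega_{\VEC{a}})$-orthogonal to $\VEC{RM}^d$ whenever $\VEC{a}\in\mathcal{V}_h^i$ or $\VEC{a}\in\mathcal{V}_h^b\setminus\mathcal{V}_h^D$---is precisely the Neumann compatibility condition, which holds by the definition of the rigid-body corrections $\VEC{y}^k$ and $\tilde{\VEC{y}}^k$. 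Since each $\VEC{\sigma}_{h,\bullet}^{\VEC{a},k}\in\MAT{H}(\textbf{div},\omega_{\VEC{a}})$ has vanishing normal trace on $\partial\omega_{\VEC{a}}\setminus\partial\Omega$, its extension by zero lies in $\MAT{H}(\textbf{div},\Omega)$; hence $\VEC{\sigma}_{h,\rm dis}^k$, $\VEC{\sigma}_{h,\rm reg}^k$, $\VEC{\sigma}_{h,\rm lin}^k$, and their sum $\VEC{\sigma}_h^k$ all belong to $\MAT{H}(\textbf{div},\Omega)$, which is item~1.

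For item~2 I would first argue, just as in Lemma~\ref{lemma - properties of reconstruction1}, that for each $\bullet\in\{\rm dis,\rm reg,\rm lin\}$ the divergence equation in fact holds against every $\VEC{v}_h\in\VEC{U}_h(\omega_{\VEC{a}})$: for $\VEC{a}\in\mathcal{V}_h^D$ this is by definition of $\VEC{U}_h^{\VEC{a}}$, while for the remaining vertices it suffices to test the first equation of the local problem with a rigid-body motion $\VEC{z}$, integrate by parts, discard the volume term using that $\VEC{\nabla}\VEC{z}$ is skew-symmetric and constant (hence belongs to $\VEC{\Lambda}_h^{\VEC{a}}$) together with the weak-symmetry equation, and identify the remaining boundary contribution with $(\VEC{v}_{h,\bullet}^{\VEC{a},k},\VEC{z})_{\omega_{\VEC{a}}}$ thanks to the prescribed normal traces and to the very definition of $\VEC{y}^k$ and $\tilde{\VEC{y}}^k$. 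I would then fix $T\in\mathcal{T}_h$ and $\VEC{v}_T\in\VEC{\mathcal{P}}^{p-1}(T)$, extend $\VEC{v}_T$ by zero (so that it is admissible in $\VEC{U}_h(\omega_{\VEC{a}})$ for every $\VEC{a}\in\mathcal{V}_T$), and sum the divergence equation over $\VEC{a}\in\mathcal{V}_T$ and over $\bullet\in\{\rm dis,\rm reg,\rm lin\}$. The left-hand side collapses to $(\VEC{\rm div}\,\VEC{\sigma}_h^k,\VEC{v}_T)_T$; on the right-hand side the contributions $-\VEC{y}^k$, $-\tilde{\VEC{y}}^k$ and $\VEC{y}^k+\tilde{\VEC{y}}^k$ cancel, leaving $\sum_{\VEC{a}\in\mathcal{V}_T}(-\psi_{\VEC{a}}\VEC{f}+\VEC{\sigma}(\VEC{u}_h^k)\VEC{\nabla}\psi_{\VEC{a}},\VEC{v}_T)_{\omega_{\VEC{a}}}=-(\VEC{f},\VEC{v}_T)_T$, where I used $\sum_{\VEC{a}\in\mathcal{V}_T}\psi_{\VEC{a}}\equiv1$ on $T$, whence $\sum_{\VEC{a}\in\mathcal{V}_T}\VEC{\nabla}\psi_{\VEC{a}}\equiv\VEC{0}$. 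This gives item~2.

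For items~3 and~4 I would fix a face $F\in\mathcal{F}_h^N$ or $F\in\mathcal{F}_h^C$ and $\VEC{v}_F\in\VEC{\mathcal{P}}^p(F)$, observe that $\VEC{v}_F$ lies in the normal-trace space $(\VEC{\Sigma}_h\VEC{n})|_F$, so that $(\Pi_{\VEC{\Sigma}_h\VEC{n}_{\omega_{\VEC{a}}}}(\VEC{w}),\VEC{v}_F)_F=(\VEC{w},\VEC{v}_F)_F$ for any $\VEC{w}$, and then evaluate $(\VEC{\sigma}_{h,\bullet}^k\VEC{n},\VEC{v}_F)_F=\sum_{\VEC{a}\in\mathcal{V}_F}(\VEC{\sigma}_{h,\bullet}^{\VEC{a},k}\VEC{n},\VEC{v}_F)_F$ by reading off the prescribed normal trace of the corresponding local space and using $\sum_{\VEC{a}\in\mathcal{V}_F}\psi_{\VEC{a}}\equiv1$ on $F$. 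This directly yields the three identities of item~4 for $\VEC{\sigma}_{h,\rm dis}^k$, $\VEC{\sigma}_{h,\rm reg}^k$, $\VEC{\sigma}_{h,\rm lin}^k$ separately; item~3 follows in the same way, the reg and lin spaces carrying homogeneous Neumann traces so that only the dis part contributes $\Pi_{\VEC{\Sigma}_h\VEC{n}_{\omega_{\VEC{a}}}}(\psi_{\VEC{a}}\VEC{g}_N)$ on $\Gamma_N$. Summing the three contact identities, the prescribed data telescope to $P_{\rm lin}^{k-1}(\VEC{u}_h^k)$, which (together with the fact that every prescribed $\Gamma_C$-trace is a scalar multiple of the face normal, hence has zero tangential part) shows that $\VEC{\sigma}_h^k$ indeed satisfies Definition~\ref{definition - equilibrated stress reconstruction}. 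The only delicate point throughout is the bookkeeping of the rigid-body corrections $\VEC{y}^k$, $\tilde{\VEC{y}}^k$ in the divergence identity and the matching cancellations in the normal traces; beyond that, no step presents any difficulty not already met in the proof of Lemma~\ref{lemma - properties of reconstruction1}, which is why the details are omitted.
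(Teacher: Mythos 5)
Your proposal is correct and follows essentially the same route as the paper intends: the paper explicitly omits this proof, stating that it is "similar to that of Lemma~\ref{lemma - properties of reconstruction1}", and your argument is precisely the appropriate adaptation of that earlier proof, with the additional bookkeeping of the rigid-body corrections $\VEC{y}^k$, $\tilde{\VEC{y}}^k$ and the telescoping of the three prescribed $\Gamma_C$-traces handled correctly. One minor wording slip: in the argument for item~2 you say you "test the first equation of the local problem with a rigid-body motion"; rigid-body motions are not admissible test functions in $\VEC{\Sigma}_h^{\VEC{a}}$, and what you actually do (and correctly carry out) is apply Green's formula to $(\VEC{\rm div}\,\VEC{\sigma}_{h,\bullet}^{\VEC{a},k},\VEC{z})_{\omega_{\VEC{a}}}$ for $\VEC{z}\in\VEC{RM}^d$, annihilate the volume term via $\VEC{\nabla}\VEC{z}\in\VEC{\Lambda}_h^{\VEC{a}}$ and the weak-symmetry equation, and then identify the boundary term with $(\VEC{v}_{h,\bullet}^{\VEC{a},k},\VEC{z})_{\omega_{\VEC{a}}}$; the phrasing should reflect that.
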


\begin{remark}[Validity of Property 4. in Definition \ref{definition - equilibrated stress reconstruction} and of Assumption \ref{assumption on stress reconstruction}]\label{remark_no tangential part for sigma_h}
  The fourth property of the previous lemma implies that $(\VEC{\sigma}_{h,\bullet}^k\VEC{n})|_F$ has the same direction as the normal vector $\VEC{n}$, and, as a consequence, $\VEC{\sigma}_{h,\bullet}^{k,\VEC{t}} = \VEC{0}$ on $F\in \mathcal{F}_h^C$, for $\bullet\in\{\rm dis, reg, lin\}$. 
  Moreover, by definition, $\VEC{\sigma}_h^k$ is the sum of three tensors representing discretization, regularization and linearization, respectively.
  Therefore, $\VEC{\sigma}_h^k$ is an equilibrated stress reconstruction in the sense of Definition \ref{definition - equilibrated stress reconstruction} that additionally satisfies Assumption \ref{assumption on stress reconstruction}.
\end{remark}

\begin{remark}[Alternative expressions of local estimators]
Thanks to Lemma \ref{lemma - properties of reconstruction2}, we can rewrite the oscillation \eqref{oscillation error estimator}, Neumann \eqref{Neumann error estimator}, and contact \eqref{discretization error estimator} estimators as follows:
\begin{gather*}
    \eta_{\emph{osc},T}^k = \frac{h_T}{\pi} \left\lVert \VEC{f}-\VEC{\Pi}_T^{p-1} \VEC{f} \right\rVert_T,\qquad
    \eta_{\emph{Neu},T}^k = \sum_{F\in \mathcal{F}_T^C} C_{t,T,F} h_F^{\nicefrac{1}{2}} \left\lVert \VEC{g}_N-\VEC{\Pi}_F^p \VEC{g}_N\right\rVert_F, \\
    \eta_{\emph{cnt},T}^k = \sum_{F\in\mathcal{F}_T^C} h_F^{\nicefrac{1}{2}} \left\lVert \left[P_{1,\gamma}^n(\VEC{u}_h^k)\right]_{\mathbb{R}^-}
    - \Pi_F^p \left[P_{1,\gamma}^n(\VEC{u}_h^k)\right]_{\mathbb{R}^-} \right\rVert_F, 
\end{gather*}
where $\VEC{\Pi}_T^{p-1}$, $\VEC{\Pi}_F^p$, and $\Pi_F^p$ denote the $L^2$-orthogonal projectors on the polynomial spaces $\VEC{\mathcal{P}}^{p-1}(T)$, $\VEC{\mathcal{P}}^p(F)$, and $\mathcal{P}^p(F)$, respectively.
\end{remark}

\section{Efficiency of local estimators}\label{sec:efficiency}

\begin{figure}[tb]
	\centering
	\begin{subfigure}{0.45\textwidth}
		\centering
		\begin{tikzpicture}[scale=0.5]
		\fill[fill=gray!20] (-3,-4) -- (-5,0) -- (-4,3) -- (-1,4) -- (3,3) -- (5,0) -- (1,-3) -- (-3,-4);
		\fill[fill=violet!20] (-2,-1) -- (-1,2) -- (2,-1);
		\draw[thick] (-3,-4) -- (-5,0) -- (-4,3) -- (-1,4) -- (3,3) -- (5,0) -- (1,-3) -- (-3,-4);
		\draw[thick] (-3,-4) -- (-2,-1) -- (-5,0) -- (-1,2) -- (-4,3);
		\draw[thick] (-1,4) -- (-1,2) -- (3,3) -- (2,-1) -- (5,0);
		\draw[thick] (1,-3) -- (2,-1) -- (-1,2) -- (-2,-1) -- (2,-1) -- (1,-3) -- (-2,-1);
		\node at (-0.5,0) {\textcolor{violet}{$T$}};
		\end{tikzpicture}
	\end{subfigure}
	\hfill
	\begin{subfigure}{0.45\textwidth}
		\centering
		\begin{tikzpicture}[scale=0.6]
		\fill[fill=gray!20] (5,0) -- (-5,0) -- (-4,2) -- (-2,4) -- (2,5) -- (4,2) -- (5,0);
		\fill[fill=violet!20] (-2,0) -- (-0.5,3) -- (2,0) -- (-2,0);
		\draw[thick] (5,0) -- (-5,0) -- (-4,2) -- (-2,4) -- (2,5) -- (4,2) -- (5,0);
		\draw[thick] (-2,0) -- (-4,2) -- (-0.5,3) -- (-2,0);
		\draw[thick] (-2,4) -- (-0.5,3) -- (2,0) -- (4,2) -- (-0.5,3) -- (2,5);
		\node at (-0.25,1) {\textcolor{violet}{$T$}};
		\end{tikzpicture}
	\end{subfigure}
	\caption{Illustration of $\tilde{\omega}_T$ for $T\in\mathcal{T}_h$ such that $\mathcal{F}_T^b = \emptyset$ (\emph{left}) and that $\mathcal{F}_T^b \neq \emptyset$ (\emph{right}).}
	\label{fig:illustration tilde omega T}
\end{figure}
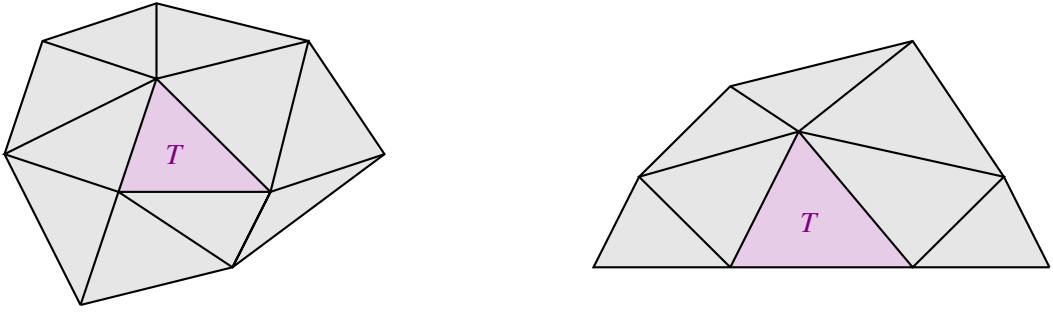

In this section we briefly discuss the local efficiency of the estimators defined by \eqref{local estimators for u_h^k} when using the stress reconstruction described in Section \ref{sec - reconstruction - error decomposition}.

Following \cite{Verfurth1999}, for any $T\in\mathcal{T}_h$ we denote by $\tilde{\omega}_T$ the union of all elements sharing at least one vertex with $T$ (see Figure \ref{fig:illustration tilde omega T}) and by $\mathcal{T}_T$ the corresponding set of elements.
Moreover, as in Subsection \ref{sec:comparison norms}, $a\lesssim b$ stands for $a\leq C b$, where $C>0$ is a constant which is independent of the mesh size $h$ and of the Nitsche parameter $\gamma_0$.
We introduce, for all $T\in\mathcal{T}_h$, the \emph{local residual} defined as follows:
For all $\VEC{w}_h\in\VEC{V}_h$ and all $\VEC{v}\in\VEC{H}_D^1(\tilde{\omega}_T)$
\begin{equation*}
\left\langle\mathcal{R}_{\mathcal{T}_T}(\VEC{w}_h), \VEC{v}\right\rangle_{\tilde{\omega}_T} \coloneqq (\VEC{f},\VEC{v})_{\tilde{\omega}_T} + (\VEC{g}_N,\VEC{v})_{\partial\tilde{\omega}_T\cap\Gamma_N} - \bigl(\VEC{\sigma}(\VEC{w}_h),\VEC{\varepsilon}(\VEC{v})\bigr)_{\tilde{\omega}_T} + \left(\left[P_{1,\gamma}^n(\VEC{w}_h)\right]_{\mathbb{R}^-},v^n\right)_{\partial\tilde{\omega}_T\cap \Gamma_C},
\end{equation*}
where
\begin{equation*}
\VEC{H}^1_D(\tilde{\omega}_T) \coloneqq \left\{\VEC{v}\in \VEC{H}^1(\tilde{\omega}_T)\ :\ \VEC{v}=\VEC{0}\ \text{on}\  \partial\tilde{\omega}_T\cap\Gamma_D \ \text{and on}\ \partial\tilde{\omega}_T\cap\Omega\right\}.
\end{equation*}
Letting
\begin{equation*}
\norm{\VEC{v}}_{\tilde{\omega}_T}\coloneqq \left(\left\lVert\VEC{\nabla}\VEC{v}\right\rVert_{\tilde{\omega}_T}^2 + \left\lvert\VEC{v}\right\rvert^2_{C,\tilde{\omega}_T} \right)^{\nicefrac{1}{2}} = \biggl(\left\lVert\VEC{\nabla}\VEC{v}\right\rVert_{\tilde{\omega}_T}^2 + \sum_{F\in\mathcal{F}_{\mathcal{T}_T}^C} \frac{1}{h_F} \lVert\VEC{v}\rVert_F^2 \biggr)^{\nicefrac{1}{2}},
\end{equation*}
with $\mathcal{F}_{\mathcal{T}_T}^C$ denoting the (possibly empty) set of faces of $\mathcal{T}_T$ that lie on $\Gamma_C$,
the corresponding dual norm of the local residual for a function $\VEC{w}_h\in\VEC{V}_h$ is
\begin{equation}\label{eq:dual norm of the local residual}
	\begin{split}
		\localdualnormresidual{\VEC{w}_h} = \sup_{\substack{\VEC{v}\in \VEC{H}^1_D(\tilde{\omega}_T),\ \norm{\VEC{v}}_{\tilde{\omega}_T} = 1}} \left\langle \mathcal{R}_{\mathcal{T}_T}(\VEC{w}_h), \VEC{v}\right\rangle_{\tilde{\omega}_T} .
	\end{split}
\end{equation}

\begin{theorem}[Local efficiency]\label{theo:local efficiency}
	Assume $d=2$.
	Let $\VEC{u}_h^k\in\VEC{V}_h$ and let $\VEC{\sigma}_h^{k}$ be the stress reconstruction of Construction \ref{reconstruction2}, and assume that the local stopping criteria \eqref{eq:local stopping criteria} are used in Lines \ref{alg:global stopping criterion lin} and \ref{alg:global stopping criterion reg} of Algorithm \ref{algorithm}, respectively.
	Then, for every element $T\in\mathcal{T}_h$, it holds
	\begin{multline}\label{thesis_lemma local efficiency}
	\eta_{\emph{osc},T}^k + \eta_{\emph{str},T}^k + \eta_{\emph{Neu},T}^k + \eta_{\emph{cnt},T}^k + \eta_{\emph{lin},T}^k + \eta_{\emph{reg},T}^k 
	\\ 
	\lesssim
	\localdualnormresidual{\VEC{u}_h^k}
	+ \eta_{\emph{osc},\mathcal{T}_T}^k + \eta_{\emph{Neu},\mathcal{T}_T}^k 
	+ \eta_{\emph{cnt},\mathcal{T}_T}^k,
	\end{multline}
	where
	\[
	\eta_{\bullet,\mathcal{T}_T}^k \coloneqq \left(\sum_{T'\in\mathcal{T}_T} \left(\eta_{\bullet,T'}^k\right)^2\right)^{\nicefrac{1}{2}} \qquad \text{with}\ \bullet\in\{ \emph{osc}, \emph{Neu}, \emph{cnt}\}.
	\]
\end{theorem}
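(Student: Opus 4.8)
The plan is to reduce \eqref{thesis_lemma local efficiency} to the local efficiency of the single stress estimator $\eta_{\mathrm{str},T}^k$, and then to prove the latter by the standard stability argument for patchwise equilibrated reconstructions, adapted to the Arnold--Falk--Winther element and to the presence of the Neumann and contact portions of $\partial\Omega$. First I would dispose of the data-oscillation estimators: by the reformulations of $\eta_{\mathrm{osc},T}^k$, $\eta_{\mathrm{Neu},T}^k$ and $\eta_{\mathrm{cnt},T}^k$ recorded after Lemma~\ref{lemma - properties of reconstruction2}, these are pure projection errors of the data $\VEC{f}$, $\VEC{g}_N$ and $[P_{1,\gamma}^n(\VEC{u}_h^k)]_{\mathbb{R}^-}$, hence, since $T\in\mathcal{T}_T$, each is trivially bounded by its patch counterpart $\eta_{\bullet,\mathcal{T}_T}^k$. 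Next, the local stopping criteria \eqref{eq:local stopping criterion 1}--\eqref{eq:local stopping criterion 2}, together with $\gamma_{\mathrm{lin},T},\gamma_{\mathrm{reg},T}\in(0,1)$, give at once $\eta_{\mathrm{lin},T}^k\le \eta_{\mathrm{osc},T}^k+\eta_{\mathrm{str},T}^k+\eta_{\mathrm{Neu},T}^k+\eta_{\mathrm{cnt},T}^k$, and then, inserting this bound into \eqref{eq:local stopping criterion 2}, $\eta_{\mathrm{reg},T}^k\le 2(\eta_{\mathrm{osc},T}^k+\eta_{\mathrm{str},T}^k+\eta_{\mathrm{Neu},T}^k+\eta_{\mathrm{cnt},T}^k)$, with absolute constants. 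Adding everything up, the whole left-hand side of \eqref{thesis_lemma local efficiency} is $\lesssim \eta_{\mathrm{osc},T}^k+\eta_{\mathrm{str},T}^k+\eta_{\mathrm{Neu},T}^k+\eta_{\mathrm{cnt},T}^k$, so it only remains to establish $\eta_{\mathrm{str},T}^k\lesssim \localdualnormresidual{\VEC{u}_h^k}+\eta_{\mathrm{osc},\mathcal{T}_T}^k+\eta_{\mathrm{Neu},\mathcal{T}_T}^k+\eta_{\mathrm{cnt},\mathcal{T}_T}^k$.

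For this I would localize to patches. Since $\VEC{\sigma}_{h,\mathrm{dis}}^k|_T=\sum_{\VEC{a}\in\mathcal{V}_T}\VEC{\sigma}_{h,\mathrm{dis}}^{\VEC{a},k}|_T$ and $\sum_{\VEC{a}\in\mathcal{V}_T}\psi_{\VEC{a}}\equiv 1$ on $T$, the triangle inequality gives $\eta_{\mathrm{str},T}^k\le\sum_{\VEC{a}\in\mathcal{V}_T}\lVert\VEC{\sigma}_{h,\mathrm{dis}}^{\VEC{a},k}-\psi_{\VEC{a}}\VEC{\sigma}(\VEC{u}_h^k)\rVert_{\omega_{\VEC{a}}}$, so it suffices to bound each patch contribution. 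The mixed problem of Construction~\ref{reconstruction2} with $\bullet=\mathrm{dis}$ is the saddle-point formulation of the constrained least-squares problem of minimizing $\lVert\VEC{\tau}_h-\psi_{\VEC{a}}\VEC{\sigma}(\VEC{u}_h^k)\rVert_{\omega_{\VEC{a}}}$ over the discrete tensors with prescribed (discrete) divergence $-\psi_{\VEC{a}}\VEC{f}+\VEC{\sigma}(\VEC{u}_h^k)\VEC{\nabla}\psi_{\VEC{a}}-\VEC{y}^k$, prescribed (projected) normal trace on $\partial\omega_{\VEC{a}}$, and weak symmetry. By the discrete inf-sup stability of the Arnold--Falk--Winther pair on $\omega_{\VEC{a}}$ \cite{Arnold2007} and the stability of this constrained minimization, in the spirit of the equilibrated-flux efficiency analysis \cite{Ern2015, Botti2018, Riedlbeck-DiPietro2017}, the discrete minimum is controlled, up to the oscillation incurred by replacing $\VEC{f}$, $\VEC{g}_N$ and $[P_{1,\gamma}^n(\VEC{u}_h^k)]_{\mathbb{R}^-}$ by their $L^2$-projections in the constraints (which produce precisely $\eta_{\mathrm{osc}}$, $\eta_{\mathrm{Neu}}$ and $\eta_{\mathrm{cnt}}$ contributions on $\mathcal{T}_T$), by the corresponding \emph{continuous} constrained minimum on $\omega_{\VEC{a}}$. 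By a Prager--Synge-type duality the latter equals a dual norm over $\VEC{v}\in\VEC{H}^1(\omega_{\VEC{a}})$ vanishing on the interior part of $\partial\omega_{\VEC{a}}$ (and on $\Gamma_D\cap\partial\omega_{\VEC{a}}$ when $\VEC{a}\in\mathcal{V}_h^D$); integrating by parts and using $\VEC{\nabla}(\psi_{\VEC{a}}\VEC{v})=\psi_{\VEC{a}}\VEC{\nabla}\VEC{v}+\VEC{v}\otimes\VEC{\nabla}\psi_{\VEC{a}}$ together with the symmetry of $\VEC{\sigma}(\VEC{u}_h^k)$, its numerator is identified (up to sign) with $\langle\mathcal{R}_{\mathcal{T}_T}(\VEC{u}_h^k),\psi_{\VEC{a}}\VEC{v}\rangle_{\tilde{\omega}_T}$, the rigid-body-motion correction $\VEC{y}^k$ being harmless since the test functions may be taken $\VEC{L}^2(\omega_{\VEC{a}})$-orthogonal to $\VEC{RM}^d$ whenever $\VEC{a}\notin\mathcal{V}_h^D$, and $\psi_{\VEC{a}}\VEC{v}$, extended by zero, belongs to $\VEC{H}_D^1(\tilde{\omega}_T)$ with $\norm{\psi_{\VEC{a}}\VEC{v}}_{\tilde{\omega}_T}\lesssim\lVert\VEC{\nabla}\VEC{v}\rVert_{\omega_{\VEC{a}}}$ by the hat-function bounds and a patch trace inequality. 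This yields $\lVert\VEC{\sigma}_{h,\mathrm{dis}}^{\VEC{a},k}-\psi_{\VEC{a}}\VEC{\sigma}(\VEC{u}_h^k)\rVert_{\omega_{\VEC{a}}}\lesssim\localdualnormresidual{\VEC{u}_h^k}+\eta_{\mathrm{osc},\mathcal{T}_T}^k+\eta_{\mathrm{Neu},\mathcal{T}_T}^k+\eta_{\mathrm{cnt},\mathcal{T}_T}^k$; summing over the uniformly bounded number of vertices $\VEC{a}\in\mathcal{V}_T$ (all contained in $\tilde{\omega}_T$) and combining with the reductions of the first paragraph proves \eqref{thesis_lemma local efficiency}.

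The main obstacle is the discrete stability of the patchwise Arnold--Falk--Winther reconstruction with mixed interior/Neumann/contact boundary conditions and only \emph{weakly} enforced symmetry: one must argue that the skew-symmetric part of $\VEC{\sigma}_{h,\mathrm{dis}}^{\VEC{a},k}$ does not spoil the $\VEC{L}^2$-bound, and that the normal-trace projections $\Pi_{\VEC{\Sigma}_h\VEC{n}_{\omega_{\VEC{a}}}}$ appearing in Construction~\ref{reconstruction2} contribute only the advertised oscillation. It is precisely here, together with the patch-local approximation and trace estimates, that the restriction $d=2$ and the shape-regularity of the mesh family enter.
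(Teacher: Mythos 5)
Your proposal is correct and its overall skeleton coincides with the paper's: handle $\eta_{\mathrm{lin},T}^k$ and $\eta_{\mathrm{reg},T}^k$ via the local stopping criteria \eqref{eq:local stopping criteria}, observe that $\eta_{\mathrm{osc},T}^k$, $\eta_{\mathrm{Neu},T}^k$, $\eta_{\mathrm{cnt},T}^k$ are trivially dominated by their $\mathcal{T}_T$-counterparts, and reduce everything to bounding $\eta_{\mathrm{str},T}^k$. Where you diverge is in the choice of intermediary for that last bound. The paper introduces a Verf\"urth-type residual estimator $\eta_{\sharp,T}^k$ on the patch $\tilde{\omega}_T$ and splits the work into two lemmas: Lemma~\ref{lemma-local sharp estimator} ($\eta_{\sharp,T}^k \lesssim \localdualnormresidual{\VEC{u}_h^k} + \eta_{\mathrm{osc},\mathcal{T}_T}^k + \eta_{\mathrm{Neu},\mathcal{T}_T}^k + \eta_{\mathrm{cnt},\mathcal{T}_T}^k$, adapting the bubble-function argument of ElAlaoui et al.) and Lemma~\ref{lemma-local stress estimator} ($\eta_{\mathrm{str},T}^k \lesssim \eta_{\sharp,T}^k$, adapting Botti et al.). You instead compare the discrete patch minimization directly to its continuous counterpart, and then identify that continuous minimum with a dual norm of the residual localized by $\psi_{\VEC{a}}$ via Prager--Synge-type duality. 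Both routes are legitimate; yours is in the style of the more recent Ern--Vohral\'ik stability framework, and it is arguably a shorter chain, whereas the paper's route makes the residual estimator explicit. The genuinely hard step, the discrete stability of the patchwise AFW reconstruction with weakly enforced symmetry, is the same in both and is precisely where $d=2$ enters; the paper pinpoints this further as the existence of the auxiliary space $\VEC{M}_h^{\VEC{a}}$ of \cite[Section 4.4]{Botti2018}, which your sketch describes only generically as ``discrete stability with weak symmetry'' --- a small imprecision, but consistent with the paper's attribution. Your handling of the boundary details (vanishing of $\psi_{\VEC{a}}\VEC{v}$ on $\partial\omega_{\VEC{a}}\cap\Omega$ and on $\Gamma_D\cap\partial\omega_{\VEC{a}}$ for $\VEC{a}\in\mathcal{V}_h^D$, orthogonality to $\VEC{RM}^d$ absorbing $\VEC{y}^k$, oscillations landing on $\eta_{\mathrm{osc}}$, $\eta_{\mathrm{Neu}}$, $\eta_{\mathrm{cnt}}$ over $\mathcal{T}_T$) matches what is needed.
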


\begin{remark}[Restriction on the space dimension]
  The proof of Lemma \ref{lemma-local stress estimator} below requires the introduction of a space with suitable properties that are known only for in dimension $d=2$ (see the definition of the space $\VEC{M}_h^{a}$ in \cite[Section 4.4]{Botti2018}).
  This assumption reverberates in Theorem \ref{theo:local efficiency}, whose proof uses Lemma \ref{lemma-local stress estimator}.
\end{remark}

\begin{proof}[Proof of Theorem \ref{theo:local efficiency}]
  The proof hinges on classical arguments, so we only briefly outline the main ideas and refer to \cite[Section 5]{Fontana2022(these)} for further details.
  
  We introduce, for any element $T\in\mathcal{T}_h$, a local residual-based estimator $\eta_{\sharp,T}^k$ on the patch $\tilde{\omega}_T$, following the path of \cite{Verfurth1999}, and then compare it with the local estimators \eqref{local estimators for u_h^k} and the dual norm of the local residual operator \eqref{eq:dual norm of the local residual}.
  In particular, it is possible to prove the following two lemmas by adapting the approach of \cite[Appendix A]{ElAlaoui2011} and \cite[Subsection 4.4]{Botti2018}, respectively:

  \begin{lemma}[Control of the residual-based estimator $\eta_{\sharp,T}$]\label{lemma-local sharp estimator}
	  For any element $T\in\mathcal{T}_h$,
	  \begin{equation}\label{thesis_lemma local sharp_estimator}
	    \eta_{\sharp,T}^k \lesssim
	    \localdualnormresidual{\VEC{u}_h^k}
	    + \eta_{\emph{osc},\mathcal{T}_T}^k + \eta_{\emph{Neu},\mathcal{T}_T}^k + \eta_{\emph{cnt},\mathcal{T}_T}^k,
	  \end{equation}
  \end{lemma}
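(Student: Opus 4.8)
The plan is to establish \eqref{thesis_lemma local sharp_estimator} by the classical residual-equilibration technique of Verf\"urth, adapted to the mesh-dependent norm $\norm{\cdot}_{\tilde{\omega}_T}$ and to the additional contact term carried by the local residual $\mathcal{R}_{\mathcal{T}_T}(\VEC{u}_h^k)$. Recall that $\eta_{\sharp,T}^k$ is built, in the usual way, from the interior residual $h_{T'}\lVert\VEC{f}+\VEC{\rm div}\,\VEC{\sigma}(\VEC{u}_h^k)\rVert_{T'}$ over the elements $T'\in\mathcal{T}_T$, the $h_F^{\nicefrac12}$-weighted jumps of the normal trace of $\VEC{\sigma}(\VEC{u}_h^k)$ across the interior faces of $\mathcal{T}_T$, the $h_F^{\nicefrac12}$-weighted Neumann residual $\lVert\VEC{g}_N-\VEC{\sigma}(\VEC{u}_h^k)\VEC{n}\rVert_F$ on faces of $\mathcal{F}_h^N$, and the $h_F^{\nicefrac12}$-weighted contact residual $\lVert\sigma^n(\VEC{u}_h^k)-[P_{1,\gamma}^n(\VEC{u}_h^k)]_{\mathbb{R}^-}\rVert_F$ on faces of $\mathcal{F}_{\mathcal{T}_T}^C$. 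Each of these four families of terms is to be bounded by testing $\langle\mathcal{R}_{\mathcal{T}_T}(\VEC{u}_h^k),\VEC{v}\rangle_{\tilde{\omega}_T}$ against a compactly supported $\VEC{v}\in\VEC{H}^1_D(\tilde{\omega}_T)$ built from interior and face bubble functions; since these bubbles (and their supports) vanish on the internal cut $\partial\tilde{\omega}_T\cap\Omega$ and on $\Gamma_D$, the resulting test functions are automatically admissible, so the supremum in \eqref{eq:dual norm of the local residual} applies.

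For the interior residual on $T'\in\mathcal{T}_T$, I would take $\VEC{v}=b_{T'}\,\VEC{\Pi}_{T'}(\VEC{f}+\VEC{\rm div}\,\VEC{\sigma}(\VEC{u}_h^k))$ with $b_{T'}$ the polynomial bubble of $T'$ and $\VEC{\Pi}_{T'}$ the $L^2$-orthogonal projection onto polynomials of suitable degree; the boundary contributions of $\mathcal{R}_{\mathcal{T}_T}$ then vanish, an element-wise integration by parts leaves $(\VEC{f}+\VEC{\rm div}\,\VEC{\sigma}(\VEC{u}_h^k),\VEC{v})_{T'}$, and the usual norm equivalences on bubble-weighted polynomial spaces together with an inverse inequality give $h_{T'}\lVert\VEC{f}+\VEC{\rm div}\,\VEC{\sigma}(\VEC{u}_h^k)\rVert_{T'}\lesssim\localdualnormresidual{\VEC{u}_h^k}+\eta_{\text{osc},\mathcal{T}_T}^k$, the oscillation term absorbing $\VEC{f}-\VEC{\Pi}_{T'}\VEC{f}$. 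The interior-face jumps and the Neumann residual are treated by the same recipe with face bubbles $b_F$, extending $b_F\,\VEC{\Pi}_F(\cdot)$ into the adjacent element(s); the volume term produced by integration by parts is reabsorbed through the interior-residual estimate just obtained, and the Neumann data oscillation $\eta_{\text{Neu},\mathcal{T}_T}^k$ enters in the same way.

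The genuinely new point is the contact term. A face $F\in\mathcal{F}_{\mathcal{T}_T}^C$ lies on $\partial\Omega$ and thus belongs to a single element $T_F\in\mathcal{T}_T$; I would test against $\VEC{v}=b_F\,q\,\VEC{n}$ with $q=\Pi_F\big(\sigma^n(\VEC{u}_h^k)-[P_{1,\gamma}^n(\VEC{u}_h^k)]_{\mathbb{R}^-}\big)$. Since $\VEC{n}$ is constant on $F$, the tangential trace of $\VEC{v}$ on $F$ vanishes, so an integration by parts on $T_F$ produces exactly $\big(\sigma^n(\VEC{u}_h^k)-[P_{1,\gamma}^n(\VEC{u}_h^k)]_{\mathbb{R}^-},b_F q\big)_F$ plus an interior-residual volume term on $T_F$. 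The crux is the matching of $h_F$-powers: as $b_F$ is supported only in $T_F$, a standard bubble-extension bound gives $\norm{\VEC{v}}_{\tilde{\omega}_T}\lesssim h_F^{-\nicefrac12}\lVert q\rVert_F$, the $h_F^{-1}$ weight in $\lvert\cdot\rvert_{C,\tilde{\omega}_T}$ being exactly balanced by the $h_F^{\nicefrac12}$ scaling of the extension; hence the $h_F^{-\nicefrac12}$ created by the dual pairing cancels against the $h_F^{\nicefrac12}$ weight of the estimator. Dividing by $\lVert q\rVert_F$, reinstating the non-polynomial part through $\eta_{\text{cnt},\mathcal{T}_T}^k$, and once more controlling the $T_F$ volume term by the interior-residual bound yields the $\Gamma_C$-contribution of \eqref{thesis_lemma local sharp_estimator}.

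Collecting the four estimates and invoking the uniform bound on $\#\mathcal{T}_T$ (and on the number of faces of $\tilde{\omega}_T$) afforded by mesh regularity gives \eqref{thesis_lemma local sharp_estimator}. I expect the main obstacle to be precisely the $h_F$-power bookkeeping on $\Gamma_C$ just described: one must apply the weighted inverse and trace/extension inequalities with scalings consistent with the definition of $\norm{\cdot}_{\tilde{\omega}_T}$, so that the contact residual ends up with the correct $h_F^{\nicefrac12}$ weight and no stray negative power of $h$ survives. Everything else is routine and parallels \cite[Appendix A]{ElAlaoui2011}.
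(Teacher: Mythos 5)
Your proposal is correct and follows essentially the same route as the paper, which proves this lemma by referring to the classical Verf\"urth bubble-function argument adapted in \cite[Appendix~A]{ElAlaoui2011} (the paper itself gives no detail beyond that pointer). Your treatment of the contact term --- testing against a purely normal face-bubble lift, matching the $h_F^{-\nicefrac12}$ from $\norm{\cdot}_{\tilde\omega_T}$ against the $h_F^{\nicefrac12}$ weight of the estimator, and absorbing the non-polynomial part of $[P_{1,\gamma}^n(\VEC{u}_h^k)]_{\mathbb{R}^-}$ into $\eta_{\text{cnt},\mathcal T_T}^k$ --- is the right adaptation of that framework; just note a harmless sign slip in the identity produced by the integration by parts on $T_F$ (the face term comes out as $([P_{1,\gamma}^n(\VEC{u}_h^k)]_{\mathbb{R}^-}-\sigma^n(\VEC{u}_h^k),b_Fq)_F$, the negative of what you wrote), which of course does not affect the estimate.
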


  \begin{lemma}[Control of the local stress estimator]\label{lemma-local stress estimator}
	  Assume $d=2$.
	  Then, for every element $T\in\mathcal{T}_h$,
	  \[
	  \eta_{\emph{str},T}^k \lesssim \eta_{\sharp,T}^k.
	  \]
  \end{lemma}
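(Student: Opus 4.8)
The plan is to combine a partition-of-unity localization with the discrete stability of the patch mixed problems, closely following \cite[Subsection 4.4]{Botti2018}. First I would localize. Since $\sum_{\VEC{a}\in\mathcal{V}_T}\psi_{\VEC{a}}\equiv 1$ on $T$ and, by Construction \ref{reconstruction2}, $\VEC{\sigma}_{h,\rm dis}^k|_T=\sum_{\VEC{a}\in\mathcal{V}_T}\VEC{\sigma}_{h,\rm dis}^{\VEC{a},k}|_T$ (the contributions of the vertices not in $\mathcal{V}_T$ vanishing on $T$), the triangle inequality gives
\begin{equation*}
  \eta_{\rm str,T}^k = \Bigl\|\sum_{\VEC{a}\in\mathcal{V}_T}\bigl(\VEC{\sigma}_{h,\rm dis}^{\VEC{a},k}-\psi_{\VEC{a}}\VEC{\sigma}(\VEC{u}_h^k)\bigr)\Bigr\|_T \le \sum_{\VEC{a}\in\mathcal{V}_T}\bigl\|\VEC{\sigma}_{h,\rm dis}^{\VEC{a},k}-\psi_{\VEC{a}}\VEC{\sigma}(\VEC{u}_h^k)\bigr\|_{\omega_{\VEC{a}}}.
\end{equation*}
Since $\#\mathcal{V}_T=d+1$ is bounded uniformly, it suffices to estimate each term $\bigl\|\VEC{\sigma}_{h,\rm dis}^{\VEC{a},k}-\psi_{\VEC{a}}\VEC{\sigma}(\VEC{u}_h^k)\bigr\|_{\omega_{\VEC{a}}}$ for $\VEC{a}\in\mathcal{V}_T$, noting that $\omega_{\VEC{a}}\subset\tilde{\omega}_T$ whenever $\VEC{a}\in\mathcal{V}_T$.

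Next I would exploit the discrete stability of the patch problem. The tensor $\VEC{\sigma}_{h,\rm dis}^{\VEC{a},k}$ is the stress component of a mixed Arnold--Falk--Winther problem on $\omega_{\VEC{a}}$ with data $\psi_{\VEC{a}}\VEC{\sigma}(\VEC{u}_h^k)$, load $\VEC{v}_{h,\rm dis}^{\VEC{a},k}$, and the boundary conditions encoded in $\VEC{\Sigma}_{h,N,C,\rm dis}^{\VEC{a},k}$; in fact it is the minimizer of $\bigl\|\VEC{\tau}_h-\psi_{\VEC{a}}\VEC{\sigma}(\VEC{u}_h^k)\bigr\|_{\omega_{\VEC{a}}}$ over the discrete competitors satisfying the divergence and weak-symmetry constraints (these KKT conditions are precisely the mixed system). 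Using the uniform (after scaling, by shape-regularity) inf-sup stability of those spaces, I would derive a quasi-optimality bound of the form
\begin{equation*}
  \bigl\|\VEC{\sigma}_{h,\rm dis}^{\VEC{a},k}-\psi_{\VEC{a}}\VEC{\sigma}(\VEC{u}_h^k)\bigr\|_{\omega_{\VEC{a}}} \lesssim \inf_{\VEC{\tau}}\bigl\|\VEC{\tau}-\psi_{\VEC{a}}\VEC{\sigma}(\VEC{u}_h^k)\bigr\|_{\omega_{\VEC{a}}},
\end{equation*}
the infimum running over the symmetric $\VEC{\tau}\in\MAT{H}(\textbf{div},\omega_{\VEC{a}})$ that meet the homogeneous (resp.\ prescribed) normal conditions on $\partial\omega_{\VEC{a}}$ and whose divergence matches $\VEC{v}_{h,\rm dis}^{\VEC{a},k}$ in the sense tested by $\VEC{U}_h^{\VEC{a}}$. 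The passage from the discrete constraint set to the continuous one rests on a commuting, stable, weakly-symmetric right-inverse of the divergence on the patch, built through the auxiliary space $\VEC{M}_h^{\VEC{a}}$ of \cite[Subsection 4.4]{Botti2018}, which is available only for $d=2$; this is the source of the dimensional hypothesis.

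Then I would bound the continuous infimum by $\eta_{\sharp,T}^k$. The minimization is dual to a $\psi_{\VEC{a}}$-weighted local residual: its value equals $\sup\{\langle\mathcal{R}^{\VEC{a}},\VEC{v}\rangle : \VEC{v}\in\VEC{H}^1(\omega_{\VEC{a}}),\ \VEC{v}=\VEC{0}\ \text{on}\ \partial\omega_{\VEC{a}}\setminus\partial\Omega,\ \|\VEC{\nabla}\VEC{v}\|_{\omega_{\VEC{a}}}=1\}$, where $\mathcal{R}^{\VEC{a}}$ is the residual built from $\psi_{\VEC{a}}\VEC{\sigma}(\VEC{u}_h^k)$ and $\VEC{v}_{h,\rm dis}^{\VEC{a},k}$. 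The equilibrium identities of Lemma \ref{lemma - properties of reconstruction2}, together with the choice of $\VEC{y}^k$ ensuring the Neumann compatibility condition, kill the contributions on $\partial\omega_{\VEC{a}}$, so that after element-wise integration by parts $\mathcal{R}^{\VEC{a}}$ reduces to the interior residual $\psi_{\VEC{a}}(\VEC{f}+\VEC{\rm div}\,\VEC{\sigma}(\VEC{u}_h^k))$, the $\psi_{\VEC{a}}$-weighted jumps of $\VEC{\sigma}(\VEC{u}_h^k)\VEC{n}$ across interior faces, and the weighted Neumann/contact face residuals. A standard Verf\"urth element- and face-bubble argument on $\omega_{\VEC{a}}\subset\tilde{\omega}_T$ then bounds this by the corresponding pieces of $\eta_{\sharp,T}^k$, and summing over $\VEC{a}\in\mathcal{V}_T$ concludes the proof.

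The hard part is the discrete-stability step: one must exhibit a competitor in the Arnold--Falk--Winther space on the patch that respects \emph{simultaneously} the divergence constraint, the patch boundary conditions, and the weak-symmetry constraint, with $L^2$-norm controlled by the data — this is exactly where a stable right-inverse of the divergence compatible with weak symmetry is needed, and where the restriction $d=2$ enters through the space $\VEC{M}_h^{\VEC{a}}$. A further subtlety is that the whole chain must stay \emph{local}: it cannot be routed through the global energy error of $\VEC{u}_h^k$, which is why the last two steps are carried out entirely with discrete quantities (and the $\psi_{\VEC{a}}$-weighting) rather than by comparison with the exact solution $\VEC{u}$.
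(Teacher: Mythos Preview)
Your proposal is correct and follows essentially the same route as the paper, which does not give a detailed proof but merely states that the lemma follows by adapting \cite[Subsection 4.4]{Botti2018} (and points to the auxiliary space $\VEC{M}_h^{\VEC{a}}$ there as the reason for the restriction $d=2$). Your three steps---partition-of-unity localization to the patches $\omega_{\VEC{a}}$, discrete stability of the patch Arnold--Falk--Winther problem via the weakly-symmetric right-inverse of the divergence, and the Verf\"urth bubble argument bounding the patch residual by $\eta_{\sharp,T}^k$---are exactly the ingredients of that reference, and you have correctly identified both the minimization characterization of $\VEC{\sigma}_{h,\rm dis}^{\VEC{a},k}$ and the place where the dimensional restriction enters.
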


	\noindent The estimate \eqref{thesis_lemma local efficiency} follows using the local stopping criteria along with Lemmas \ref{lemma-local sharp estimator} and \ref{lemma-local stress estimator}.
\end{proof}

\begin{remark}[Global efficiency]\label{rem:global efficiency}
	The results of this subsection can be easily extended in order to prove the global efficiency. 
	Indeed, combining the global criteria shown in Line \ref{alg:global stopping criterion lin} and \ref{alg:global stopping criterion reg} of Algorithm \ref{algorithm} and the global counterpart of Lemma \ref{lemma-local sharp estimator} and \ref{lemma-local stress estimator}, we achieve
	\[
	\eta_{\emph{osc}}^k + \eta_{\emph{str}}^k + \eta_{\emph{Neu}}^k + \eta_{\emph{cnt}}^k + \eta_{\emph{lin}}^k + \eta_{\emph{reg}}^k \lesssim  \dualnormresidual{\VEC{u}_h^k} + \eta_{\emph{osc}}^k + \eta_{\emph{Neu}}^k + \eta_{\emph{cnt}}^k.   
	\]
\end{remark}

\section{Numerical results}\label{sec:Numerical results}

We present numerical cases to validate the a posteriori error estimate of Theorem \ref{theorem - a posteriori estimation u_h^k} and show its use in the framework of an adaptive algorithm.
The simulations are performed with the open source finite element library FreeFem++ (see \cite{FreeFEM} and also \url{https://freefem.org/}). 
We will use the notion of local and global total estimator: $\eta_{\rm tot,T}$ defined by \eqref{eq:local total estimator} and 
\[
\eta_{\text{tot}} \coloneqq \left(\sum_{T\in\mathcal{T}_h} \left(\eta_{\text{tot},T}\right)^2\right)^{\nicefrac{1}{2}}.
\]
For the sake of brevity, above and throughout this section we omit the superscript $k$ which identifies the step of the Newton method.

\begin{figure}[tb]
    \centering
    \begin{tikzpicture}[scale=0.9]
    \fill[gray!20] (-4,0) -- (4,0) -- (4,4) -- (-4,4) -- (-4,0);
        \draw[thick] (-4,0) -- (4,0) -- (4,4) -- (-4,4) -- (-4,0);
        \draw[very thick, cadmiumgreen] (-4,0) -- (0,0);
        \foreach \i in {0, ..., 20}
            {\draw[thick, cadmiumgreen] (-0.2*\i,0) -- (-0.2*\i-0.3,-0.3);}
        \fill[gray] (0,0) rectangle (4.05, -0.2);
        \draw[very thick, blue] (0,0) -- (4,0);
        \draw[very thick, red] (4,0) -- (4,4);
        \draw[very thick, orange] (4,4) -- (-4,4) -- (-4,0);
        \node at (-2,0.4) {\textcolor{cadmiumgreen}{$\Gamma_D$}};
        \node at (2,0.4) {\textcolor{blue}{$\Gamma_C$}};
        \node at (3.4,2) {\textcolor{red}{$\Gamma_{N,1}$}};
        \node at (-3.2,3.5) {\textcolor{orange}{$\Gamma_{N,2}$}};
        \draw[arrow, thick, red] (5,3.5) -- (4,3.5);
        \draw[arrow, thick, red] (5,2.5) -- (4,2.5);
        \draw[arrow, thick, red] (5,1.5) -- (4,1.5);
        \draw[arrow, thick, red] (5,0.5) -- (4,0.5);
        \node at (4.5,2) {\textcolor{red}{$\VEC{g}_N$}};
        \draw[arrow, thick] (0,2.5) -- (0,1.5);
        \draw[arrow, thick] (1.7,2.5) -- (1.7,1.5);
        \draw[arrow, thick] (-1.7,2.5) -- (-1.7,1.5);
        \node at (-0.85,2) {$\VEC{f}$};
    \end{tikzpicture}
    \caption{Illustration of the rectangular domain with the subdivision of the boundary as $\partial\Omega = \Gamma_D \dot{\cup} (\Gamma_{N,1} \dot{\cup} \Gamma_{N,2}) \dot{\cup} \Gamma_C$.}
    \label{fig:domain illustration}
\end{figure}
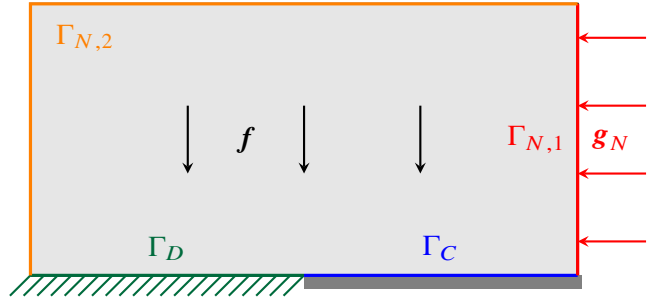



\begin{figure}[tb]
	\centering
	\begin{subfigure}{0.55\textwidth}
		\centering
		\includegraphics[width=\textwidth]{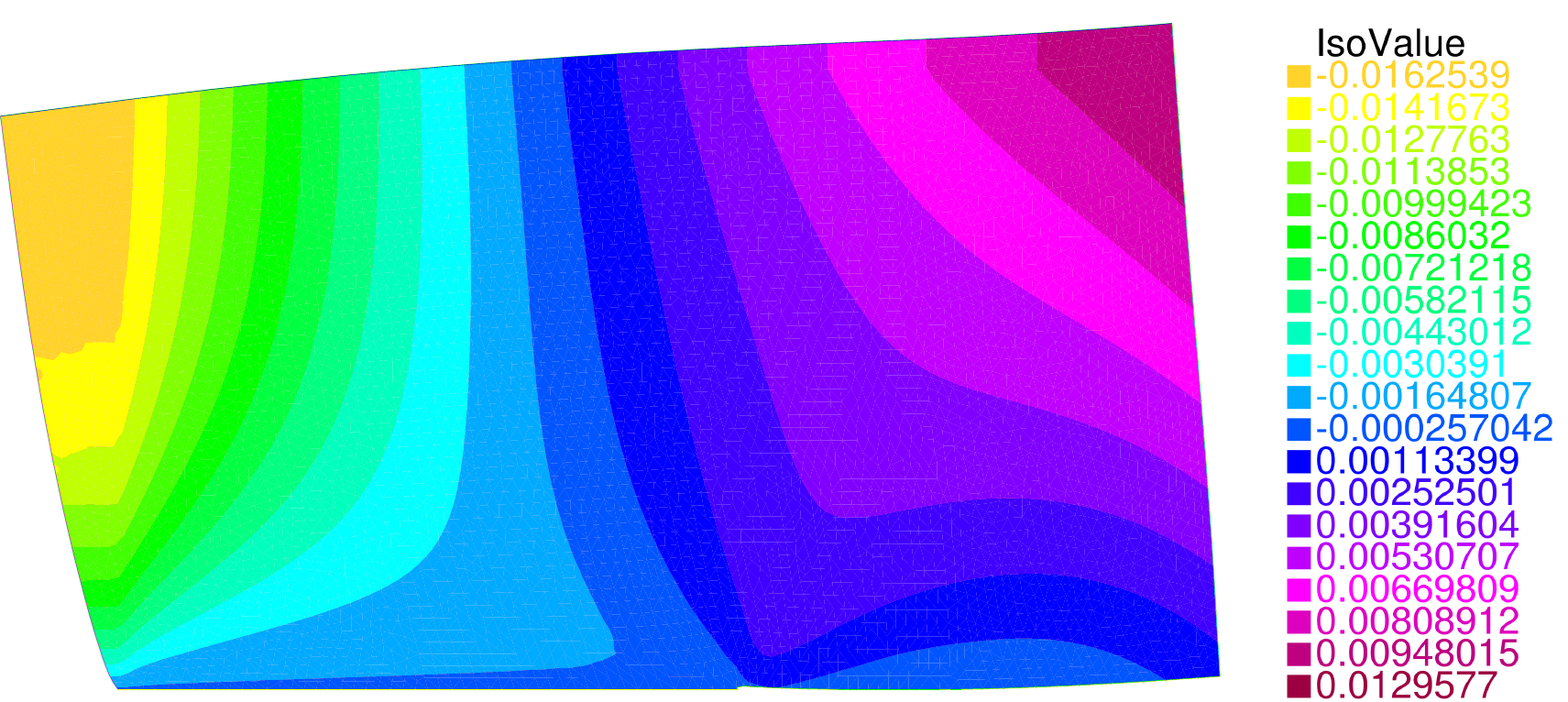}
		\subcaption{Vertical displacement in the deformed domain (amplification factor = 5).}
		\label{fig:Rvertical displacement}
	\end{subfigure}
	\hfill
	\begin{subfigure}{0.43\textwidth}
		\centering
		\resizebox{!}{0.7\textwidth}{%
\begin{tikzpicture}

\definecolor{color0}{rgb}{0.12156862745098,0.466666666666667,0.705882352941177}

\begin{axis}[
	legend cell align={left},
	legend style={
		at={(0.02,0.98)},
		anchor=north west, 
		fill opacity=0.8, 
		draw opacity=1, 
		text opacity=1, 
		draw=white!80!black
	},
	tick align=outside,
	tick pos=left,
	log basis y={10},
	x grid style={white!69.0196078431373!black},
	xmin=-0.05, xmax=1.05,
	xtick style={color=black},
	xtick={-0.2,0,0.2,0.4,0.6,0.8,1,1.2},
	xticklabels={−0.2,0.0,0.2,0.4,0.6,0.8,1.0,1.2},
	y grid style={white!69.0196078431373!black},
	ymin=-0.000200131261341054, ymax=0.00420275648816214,
	ytick style={color=black},
	ytick={-0.0005,0,0.0005,0.001,0.0015,0.002,0.0025,0.003,0.0035,0.004,0.0045},
	]
	
	\addplot [very thick, black]
	table {%
	0 0
	1 0
	};
	\addlegendentry{$\Gamma_C$ - reference conf.}

	\addplot [very thick, blue]
	table {%
	-1.99519844845245e-33 3.71378605350851e-34
	0.00278059421413092 0.00110896584105521
	0.00621123589041646 0.00112309534476257
	0.010006815536309 0.00120130966471094
	0.0140005610139415 0.00126366006024478
	0.0181254377307633 0.00126815176616846
	0.0223266457867111 0.00129336808063148
	0.0266229430833989 0.00125198272190745
	0.0309707568399264 0.00123153536985193
	0.0353618721742913 0.00119398361962227
	0.0397815897415389 0.00116479280911696
	0.0442379177429719 0.00113159894220507
	0.0487179515328029 0.00109834126402157
	0.0532197375522399 0.0010632626830777
	0.057741385853094 0.00102749982957055
	0.0622802522052812 0.000992296168249009
	0.0668347994093466 0.000956822547046837
	0.071403714530923 0.000921543364212297
	0.0759856384639183 0.000887015241721168
	0.0805768919587802 0.000852876095775886
	0.0851819776709905 0.000818280733673651
	0.08979679663835 0.000784723471156279
	0.0944186251157051 0.000751594867825561
	0.0990510587657051 0.000718712138680902
	0.10369125263894 0.000686604056917656
	0.108338770740015 0.00065529589724914
	0.112993328613533 0.000624521970285336
	0.117654767268675 0.000594580807525322
	0.122321185685559 0.00056520570591845
	0.126995140809005 0.000536028403512522
	0.131674907359598 0.000508086916651652
	0.136358857776833 0.000480797757439546
	0.141047935957538 0.000453839364999149
	0.145741715272982 0.000427353125906906
	0.150441172929303 0.000401963893919815
	0.155144798313627 0.000377169525513781
	0.159852600746688 0.000353340265049269
	0.164563480604422 0.000329811385949835
	0.169279273624811 0.00030703893438001
	0.173998746280393 0.000285003260148653
	0.178721676667155 0.000263678666212195
	0.183447961649506 0.000243047397857082
	0.188177655164017 0.000223181672881399
	0.192910651484657 0.000204029238714898
	0.197646934685915 0.000185830036426136
	0.202385552491325 0.000168123502367607
	0.207127013075528 0.000150883938380434
	0.21187189549084 0.000134666144134377
	0.216619424028027 0.000119192918658625
	0.221369725475798 0.000104789874119925
	0.226122008414433 9.08556694001054e-05
	0.230876695020063 7.75383939487537e-05
	0.235634287046026 6.54172787156754e-05
	0.240394002550086 5.40028426191507e-05
	0.245155890700203 4.33449660897775e-05
	0.249919938308947 3.3569866877476e-05
	0.254686223608091 2.4789899267601e-05
	0.259454631844834 1.72463706566126e-05
	0.264224937453405 1.07821965602535e-05
	0.268997084861833 5.43512721773658e-06
	0.273771267666661 1.9865910128381e-06
	0.278546450077299 4.28239900330553e-07
	0.283323782550944 4.49489385819131e-07
	0.288102245845773 4.03168053889558e-07
	0.292881980168591 3.86910985722588e-07
	0.297663173323816 3.69320591318956e-07
	0.302445855343551 3.59164069688708e-07
	0.3072299467409 3.47069843387209e-07
	0.312015418669421 3.40668085479197e-07
	0.316802301961632 3.31578466637246e-07
	0.321590612898995 3.27168160568312e-07
	0.326380353022174 3.18962704976839e-07
	0.331171519799369 3.13743350103219e-07
	0.335963720741708 3.0555906947632e-07
	0.340757231648599 3.05649961038546e-07
	0.345552380221943 3.08323515866292e-07
	0.350348758641798 3.04891438108729e-07
	0.355146428699277 3.02801051041896e-07
	0.359945081152761 2.97082298060445e-07
	0.364744956391728 2.98547414836667e-07
	0.369546333743421 3.04155795972948e-07
	0.374348906090053 3.0344597159693e-07
	0.379152428404394 3.00696140367864e-07
	0.383957106503341 3.0422086286452e-07
	0.388763181694168 3.11313286941976e-07
	0.393570371938103 3.11549945917394e-07
	0.398378728209187 3.17269034329933e-07
	0.403188041677972 3.16845497708851e-07
	0.407998485226585 3.22160496094761e-07
	0.412810243685286 3.33063309939473e-07
	0.417623105812271 3.42547744885276e-07
	0.422436938734795 3.46849839853071e-07
	0.427251899068989 3.53154626625068e-07
	0.43206812327784 3.75653027570176e-07
	0.436885503374373 3.85358689237867e-07
	0.441704041488417 4.27049453307499e-07
	0.446523817684977 4.10715641653631e-07
	0.451344858959317 1.06472877159175e-06
	0.456167193460848 4.50709533487719e-06
	0.46099061028706 9.63524697239382e-06
	0.465814717346219 1.53832809752851e-05
	0.470639906158784 2.30581969780272e-05
	0.47546578051116 3.16245832936636e-05
	0.480292440026602 4.10917871604858e-05
	0.48511986113777 5.14875895865788e-05
	0.489948022491504 6.27695133686767e-05
	0.494776917574435 7.48883508977296e-05
	0.499606535259156 8.78178647921239e-05
	0.504436857966394 0.000101551466997078
	0.509267878371651 0.000116054731063179
	0.5140995816543 0.000131321682579994
	0.518931955020359 0.000147346568535673
	0.523764981402601 0.000164111752907023
	0.528598654704444 0.000181604202288964
	0.53343295977182 0.000199827800308988
	0.538267884588331 0.000218764219189042
	0.54310341694955 0.000238414023507454
	0.547939544035524 0.000258772704286125
	0.552776252619412 0.000279832256998609
	0.557613531753524 0.000301587767482977
	0.562451369323735 0.000324037942575316
	0.567289751185342 0.00034717508936054
	0.572128668962109 0.00037099653090028
	0.576968110531422 0.000395496890207024
	0.58180806568082 0.000420678026795183
	0.586648520804679 0.000446531051813568
	0.591489466974183 0.000473062378051144
	0.59633089391353 0.000500257265362684
	0.601172785249347 0.000528130224428413
	0.606015137486194 0.000556660776197057
	0.610857931818447 0.000585858822521296
	0.615701162003923 0.000615722085111101
	0.620544814577655 0.000646240554877725
	0.625388876455181 0.000677421219244744
	0.630233345120842 0.000709244002332831
	0.635078196071903 0.000741731640371058
	0.639923431506151 0.000774860192152095
	0.644769039282343 0.000808631290588368
	0.649615002808641 0.0008430556194704
	0.654461314480397 0.000878117469244868
	0.659307966489935 0.000913816616644434
	0.664154942926657 0.000950162130429089
	0.669002233017154 0.000987133436359033
	0.673849830533656 0.00102474202203518
	0.678697724879892 0.00106297241187296
	0.6835459004824 0.00110183408268074
	0.688394350283801 0.00114131350416701
	0.693243063801832 0.00118141200282002
	0.698092033390732 0.00122212695970489
	0.702941249268456 0.0012634525053651
	0.707790688849768 0.00130539849523266
	0.712640351108036 0.00134794022639026
	0.71749021851745 0.00139108676451752
	0.722340290796972 0.00143482068216426
	0.727190552325645 0.00147914426536304
	0.732040997545456 0.00152405311328408
	0.736891607806871 0.00156954945436744
	0.741742375906836 0.00161562207908923
	0.746593294108077 0.00166226253744513
	0.751444345469725 0.00170946639715571
	0.756295525169189 0.0017572301123403
	0.761146823625294 0.00180554240046469
	0.765998220448988 0.00185440522044795
	0.770849717922047 0.001903795193835
	0.7757012926859 0.00195372218531184
	0.780552948916521 0.00200416257567565
	0.785404667127432 0.00205511749591456
	0.790256440778212 0.00210658422578975
	0.795108258673522 0.00215854622337377
	0.799960100900491 0.00221099775835979
	0.804811975108288 0.00226391798054676
	0.809663857447122 0.00231730606894292
	0.814515747467006 0.00237114777362976
	0.81936763450676 0.00242543361897568
	0.824219500238373 0.00248015758408316
	0.829071343766867 0.00253529288174499
	0.833923143146503 0.00259083559066952
	0.838774907571979 0.00264676140342439
	0.843626624465319 0.00270306373586665
	0.848478287783932 0.00275973438043404
	0.853329872889164 0.00281676315213094
	0.858181387185889 0.00287411461740315
	0.863032825918143 0.00293178516766806
	0.867884169650405 0.00298975864654093
	0.872735422966755 0.00304801809996611
	0.877586563389484 0.00310652700150993
	0.882437609887021 0.00316527554417376
	0.887288555859473 0.00322426002029949
	0.892139389437216 0.00328343923315159
	0.896990118688101 0.00334280571177502
	0.901840732524943 0.00340233630183997
	0.906691241964413 0.00346200517190772
	0.911541638711735 0.00352179134649817
	0.916391941846088 0.00358168065300777
	0.921242163235172 0.00364165644816552
	0.92609229319911 0.00370169602084194
	0.930942365093459 0.00376178601635621
	0.935792390153236 0.0038219323455869
	0.940642362369791 0.00388210089457759
	0.945492344725246 0.0039423048205862
	0.950342313943348 0.00400262522682109
	};
	\addlegendentry{$\Gamma_C$ - deformed conf.}
\end{axis}

\end{tikzpicture}
		}
		\subcaption{Displacement of the contact boundary.}
		\label{fig:Rdisplacement contact}
	\end{subfigure}
	\caption{Vertical displacement in the deformed configuration (\emph{left}), and representation of the contact boundary part $\Gamma_C$ in the reference (\emph{black}) and deformed (\emph{blue}) configuration (\emph{right}).}
	\label{fig:Rdisplacement}
\end{figure}

\begin{figure}[tb]
	\centering
	\begin{subfigure}{0.49\textwidth}
		\centering
		\includegraphics[width=0.95\textwidth]{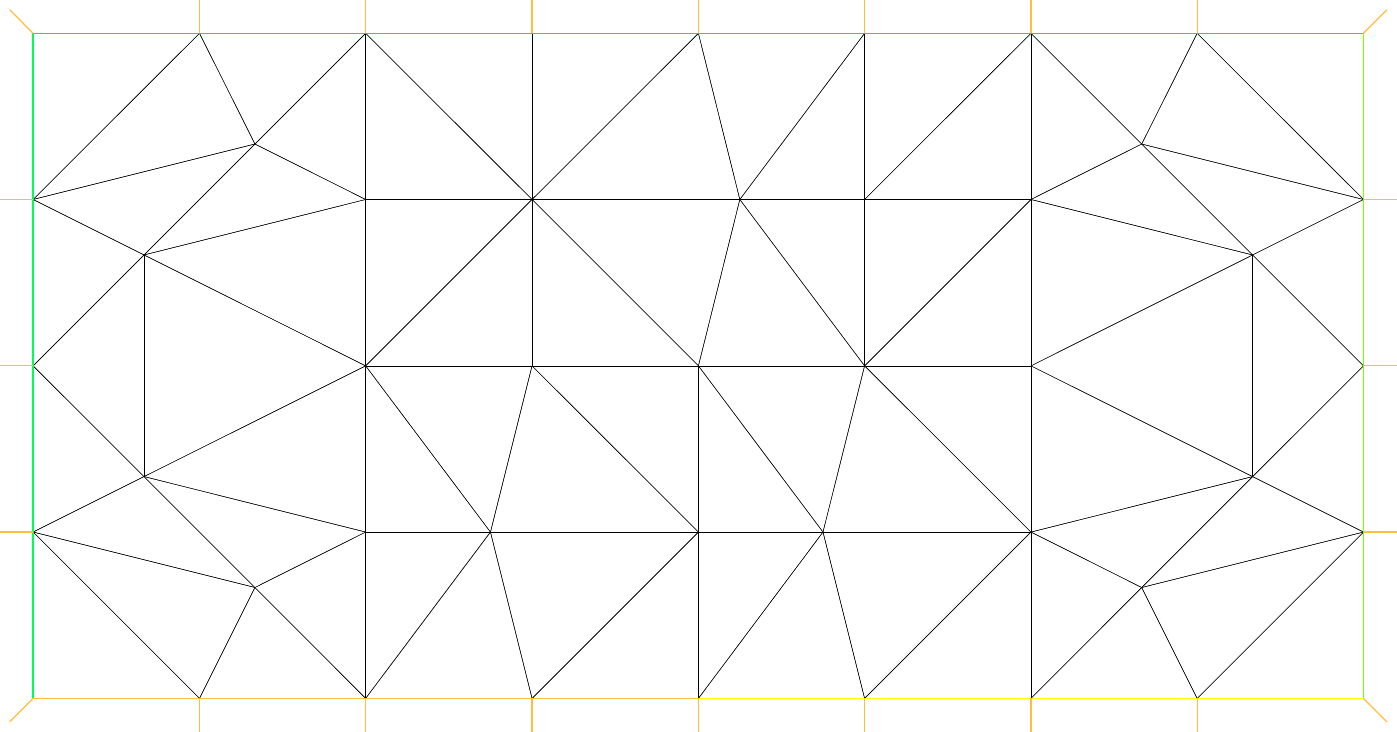}
	\end{subfigure}
	\hfill
	\begin{subfigure}{0.49\textwidth}
		\centering
		\includegraphics[width=0.9\textwidth]{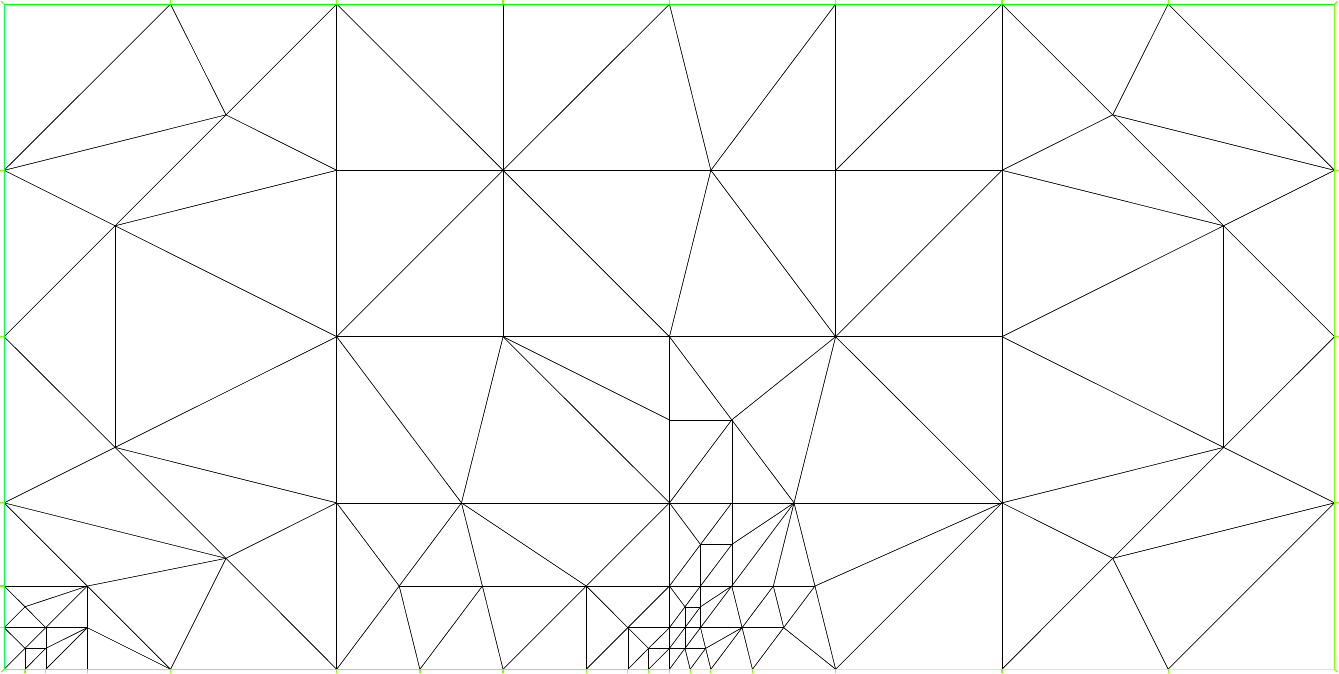}
	\end{subfigure}
	\begin{subfigure}{0.49\textwidth}
	    \vspace{0.6cm}
		\centering
		\includegraphics[width=0.9\textwidth]{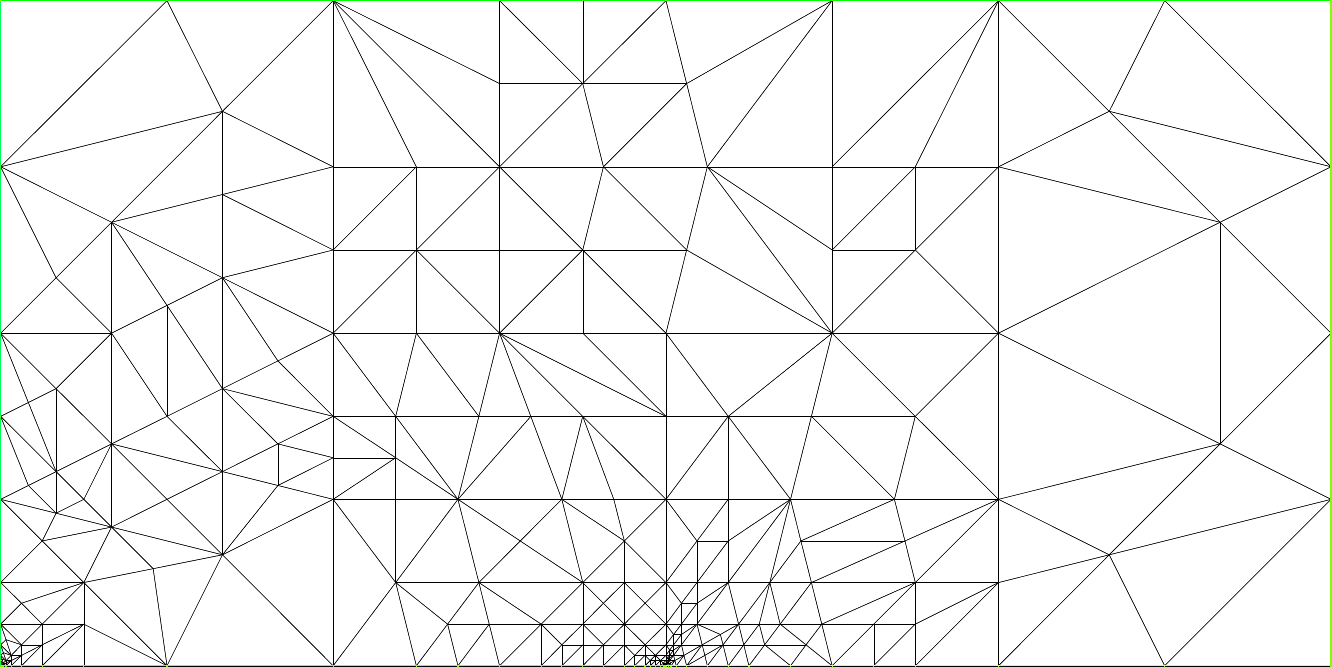}
	\end{subfigure}
	\hfill
	\begin{subfigure}{0.49\textwidth}
	    \vspace{0.6cm}
		\centering
		\includegraphics[width=0.9\textwidth]{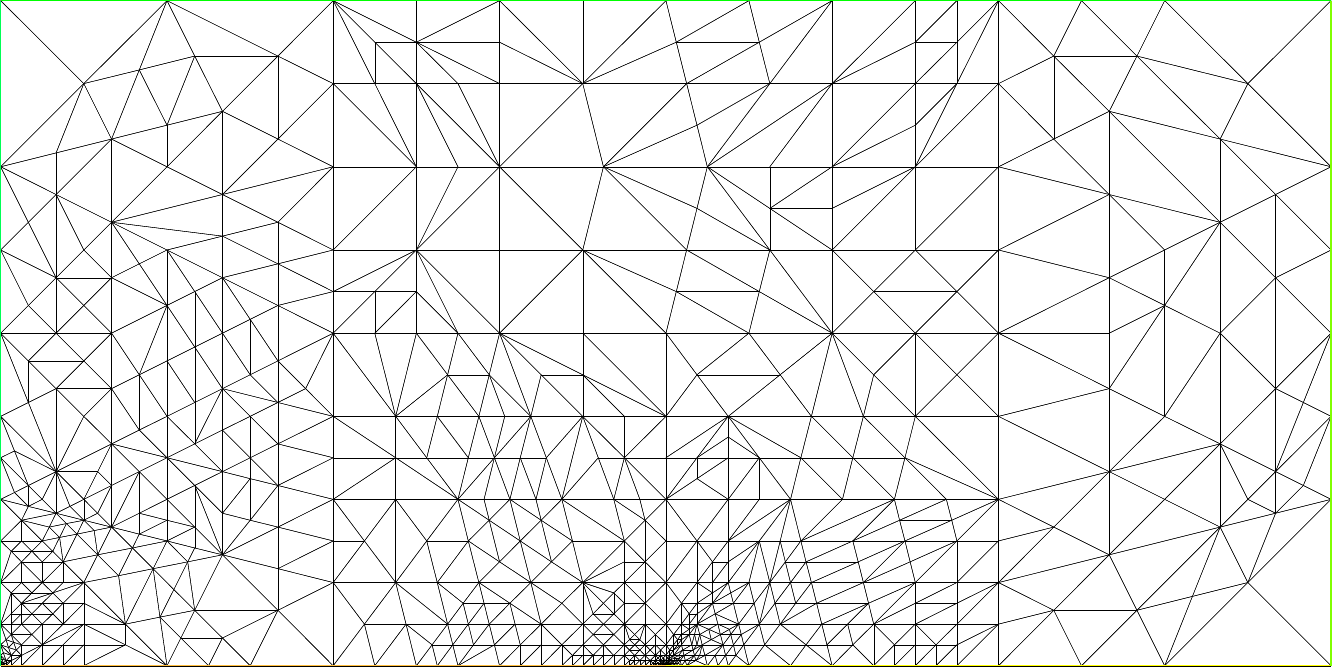}
	\end{subfigure}
	\caption{Initial mesh and adaptively refined mesh after 3, 7 and 11 steps, respectively.}
	\label{fig:Rrefinement mesh}
\end{figure}

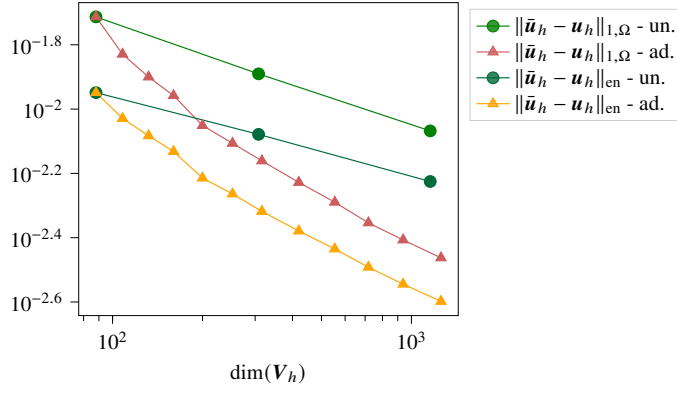
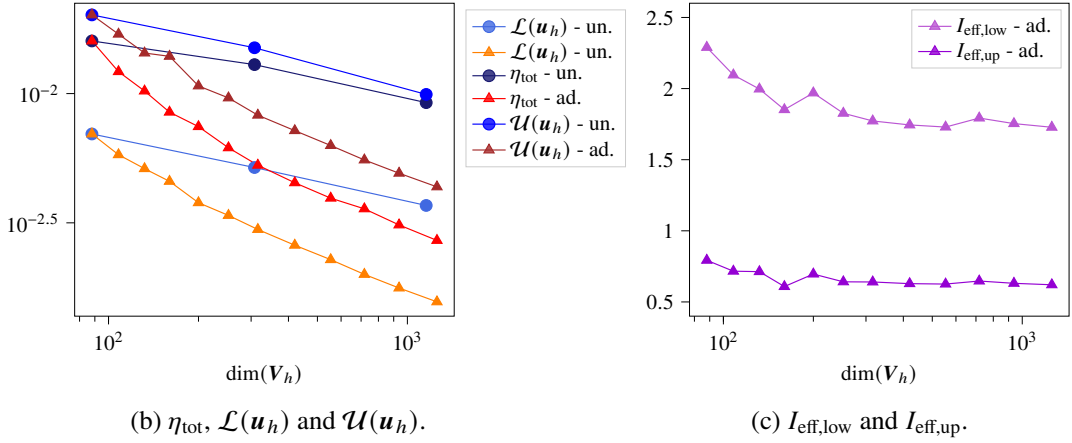
\begin{figure}[tb]
	\centering
	\begin{subfigure}{0.49\textwidth}
		\centering
		\resizebox{!}{0.7\textwidth}{%
		\begin{tikzpicture}
		
		\begin{axis}[
		legend cell align={left},
		legend style={
			at={(1.03,0.98)},
			anchor=north west, 
			fill opacity=0.8, 
			draw opacity=1, 
			text opacity=1, 
			draw=white!80!black
		},
		log basis x={10},
		log basis y={10},
		tick align=outside,
		tick pos=left,
		x grid style={white!69.0196078431373!black},
		xlabel={\(\displaystyle \text{dim}(\boldsymbol{V}_h)\)},
		xmin=77.0472821927514, xmax=1434.5476810394,
		xmode=log,
		xtick style={color=black},
		y grid style={white!69.0196078431373!black},
		ymin=0.00227878551662044, ymax=0.0214355833530327,
		ymode=log,
		ytick style={color=black}
		]
		\addplot [semithick, green!50.1960784313725!black, mark=*, mark size=3, mark options={solid}]
		table {%
			88 0.019359244481963
			308 0.0128723175295772
			1156 0.00855529847433709
		};
		\addlegendentry{$\lVert\bar{\boldsymbol{u}}_h-\boldsymbol{u}_h\rVert_{1,\Omega}$ - un.}
		
		\addplot [semithick, colorH1ad, mark=triangle*, mark size=3, mark options={solid}]
		table {%
			88 0.019359244481963
			108 0.0148218508073203
			132 0.0125857823025589
			160 0.0110171266140377
			200 0.00889089513146665
			252 0.0078286767927217
			316 0.00690276614764
			420 0.00591583176112279
			554 0.00513383153274524
			718 0.00442854942680068
			938 0.00391803725469944
			1256 0.00344613155593794
		};
		\addlegendentry{$\lVert\bar{\boldsymbol{u}}_h-\boldsymbol{u}_h\rVert_{1,\Omega}$ - ad.}
		
		\addplot [semithick, cadmiumgreen, mark=*, mark size=3, mark options={solid}]
		table {%
			88 0.0112537911755484
			308 0.00834561387631831
			1156 0.00595428400645755
		};
		\addlegendentry{$\lVert\bar{\boldsymbol{u}}_h-\boldsymbol{u}_h\rVert_{\text{en}}$ - un.}
		
		\addplot [semithick, colorEnergyad, mark=triangle*, mark size=3, mark options={solid}]
		table {%
			88 0.0112537911755484
			108 0.00935508360260589
			132 0.00826241656473109
			160 0.00737987753445346
			200 0.00610173958245025
			252 0.00544787751192368
			316 0.00480660440521382
			420 0.00417738888115692
			554 0.00367332603766472
			718 0.00322220883283672
			938 0.0028539747090177
			1256 0.00252319231417898
		};
		\addlegendentry{$\lVert\bar{\boldsymbol{u}}_h-\boldsymbol{u}_h\rVert_{\text{en}}$ - ad.}
		
%
%
%
		\end{axis}
		
		\end{tikzpicture}
		}
		\subcaption{$\lVert\bar{\VEC{u}}_h-\VEC{u}_h\rVert_{1,\Omega}$ and $\energynorm{\bar{\VEC{u}}_h-\VEC{u}_h}$.}
		\label{fig:Rcomparison H1+energy}
	\end{subfigure}
	\vspace{4mm}
	\\
	
	\begin{subfigure}{0.49\textwidth}
		\centering
		\resizebox{!}{0.7\textwidth}{%
\begin{tikzpicture}

\begin{axis}[
legend cell align={left},
legend style={
	fill opacity=0.8,
	draw opacity=1,
	text opacity=1,
	at={(1.03,0.98)},
	anchor=north west,
	draw=white!80!black
},
log basis x={10},
log basis y={10},
tick align=outside,
tick pos=left,
x grid style={white!69.0196078431373!black},
xlabel={\(\displaystyle \text{dim}(\boldsymbol{V}_h)\)},
xmin=77.0472821927514, xmax=1434.5476810394,
xmode=log,
xtick style={color=black},
y grid style={white!69.0196078431373!black},
ymin=0.00137711002361681, ymax=0.0223606888797198,
ymode=log,
ytick style={color=black}
]

\addplot [semithick, colorEnergyun, mark=*, mark size=3, mark options={solid}]
table {%
88 0.00697930500790762
308 0.00517573001066578
1156 0.00369269018204865
};
\addlegendentry{$\mathcal{L}(\boldsymbol{u}_h)$ - un.}

\addplot [semithick, orange, mark=triangle*, mark size=3, mark options={solid}]
table {%
88 0.00697930500790762
108 0.00580177655854539
132 0.0051241332283595
160 0.00457680575643399
200 0.00378413824820842
252 0.00337863020632986
316 0.00298092950838008
420 0.00259070660575104
554 0.00227810010070686
718 0.00199832908685953
938 0.00176995997778662
1256 0.00156481744503361
};
\addlegendentry{$\mathcal{L}(\boldsymbol{u}_h)$ - un.}

\addplot [semithick, colorTEun, mark=*, mark size=3, mark options={solid}]
table {%
	88 0.0159802096388928
	308 0.0129442208742431
	1156 0.00922529215439778
};
\addlegendentry{$\eta_{\text{tot}}$ - un.}

\addplot [semithick, red, mark=triangle*, mark size=3, mark options={solid}]
table {%
	88 0.0159802096388928
	108 0.012158816235978
	132 0.0102319979750605
	160 0.00847563972675256
	200 0.00745206408905243
	252 0.00617097207369811
	316 0.00528333052929641
	420 0.00451924068893605
	554 0.00393989926925823
	718 0.00358270428959455
	938 0.0031040379697508
	1256 0.00270403730758916
};
\addlegendentry{$\eta_{\text{tot}}$ - ad.}

\addplot [semithick, blue, mark=*, mark size=3, mark options={solid}]
table {%
	88 0.0201541493722794
	308 0.0150402171961936
	1156 0.00992554537364106
};
\addlegendentry{$\mathcal{U}(\boldsymbol{u}_h)$ - un.}

\addplot [semithick, colorUpad, mark=triangle*, mark size=3, mark options={solid}]
table {%
	88 0.0201541493722794
	108 0.0169791270625966
	132 0.0143517129608077
	160 0.0139388121549992
	200 0.0107062539975773
	252 0.00962144466227862
	316 0.0082511614615848
	420 0.00718970483511617
	554 0.00629723841078549
	718 0.00553746909608647
	938 0.00491995537558808
	1256 0.00435727161955293
};
\addlegendentry{$\mathcal{U}(\boldsymbol{u}_h)$ - ad.}
\end{axis}

\end{tikzpicture}
		}
		\subcaption{$\eta_{\rm tot}$, $\mathcal{L}(\VEC{u}_h)$ and $\mathcal{U}(\VEC{u}_h)$.}
		\label{fig:Rcomparison energy+upper}
	\end{subfigure}
	\hfill
	\begin{subfigure}{0.49\textwidth}
		\centering
		\resizebox{!}{0.7\textwidth}{%
\begin{tikzpicture}

\begin{axis}[
legend cell align={left},
legend style={fill opacity=0.8, draw opacity=1, text opacity=1, draw=white!80!black},
log basis x={10},
tick align=outside,
tick pos=left,
x grid style={white!69.0196078431373!black},
xlabel={\(\displaystyle \text{dim}(\boldsymbol{V}_h)\)},
xmin=77.0472821927514, xmax=1434.5476810394,
xmode=log,
xtick style={color=black},
y grid style={white!69.0196078431373!black},
ymin=0.4, ymax=2.60, 
ytick style={color=black}
]


\addplot [semithick, colorEffEnergy, mark=triangle*, mark size=3, mark options={solid}]
table {%
88 2.28965629397012
108 2.09570570553418
132 1.99682512516099
160 1.85186791351974
200 1.96928959785773
252 1.8264715866616
316 1.77237687588511
420 1.744404665084
554 1.72946714151662
718 1.79284999310346
938 1.75373342262376
1256 1.72802093699249
};
\addlegendentry{$I_{\text{eff,low}}$ - ad.}


\addplot [semithick, colorEffUp, mark=triangle*, mark size=3, mark options={solid}]
table {%
88 0.792899235969368
108 0.716103730842721
132 0.712946113331734
160 0.60806040231432
200 0.696047757762776
252 0.641376871177332
316 0.640313555115141
420 0.628571101676252
554 0.625655090731554
718 0.64699309872927
938 0.630907748706922
1256 0.620580386922631
};
\addlegendentry{$I_{\text{eff,up}}$ - ad.}

\end{axis}

\end{tikzpicture}	
		}
		\subcaption{$I_{\text{eff,low}}$ and $I_{\text{eff,up}}$.}
		\label{fig:Rcomparison effectivity}
	\end{subfigure}
	\caption{Comparison between the uniform case (circles) and the adaptive one (triangle) for the $H^1$-norm $\lVert\bar{\VEC{u}}_h-\VEC{u}_h\rVert_{1,\Omega}$ and the energy norm $\energynorm{\bar{\VEC{u}}_h-\VEC{u}_h}$ (\emph{top}), and for the global total estimator $\eta_{\text{tot}}$, $\mathcal{L}(\VEC{u}_h)$ and $\mathcal{U}(\VEC{u}_h)$ (\emph{bottom-left}). Corresponding effectivity indices $I_{\text{eff,low}}$ and $I_{\text{eff,up}}$ (\emph{bottom-right}).}
	\label{fig:Rcomparison uniform-adaptive energy+upper}
\end{figure}

We consider a body that, in its reference configuration, occupies the rectangular domain $\Omega = (-1,1)\times (0,1)$ (see Figure \ref{fig:domain illustration}), with mechanical parameters $E = 1$ and $\nu = 0.3$, corresponding to Lamé coefficients $\mu \approx 0.385$ and $\lambda \approx 0.577$. The body is subjected to a weight force $\VEC{f} = (0,-0.01)$. Homogeneous Dirichlet boundary conditions are enforced on $\Gamma_D = (-1,0)\times \{0\}$, and the the body in its undeformed configuration is in contact with a rigid horizontal interface on $\Gamma_C = (0,1) \times \{0\}$. The Nitsche parameter is $\gamma_0 = 100 E$, whereas the regularization parameter which defines the operator $[\,\cdot\,]_{\text{reg},\delta}$ is $\delta = E/100$. A pressure $\VEC{g}_N = (-0.0275, 0)$ acts on the right side of the body $\Gamma_{N, 1} = \{1\} \times (0, 1)$, and the rest of the boundary is free, i.e., $\VEC{g}_N = \VEC{0}$ on $\Gamma_{N, 2} = \{-1\} \times (0, 1) \cup (-1,1) \times \{1\}$.
Since a closed-form solution is not available for this configuration, we take as reference solution the function $\bar{\VEC{u}}_h$ computed solving \eqref{Nitsche-based_method theta=0} with Lagrange $\mathcal{P}^2$ finite elements on a fine mesh ($h \approx 0.0084$). To compute the approximated solution $\VEC{u}_h$, we use Lagrange $\mathcal{P}^1$ elements (while this choice is known to lock in the quasi-incompressible limit, it is admissible for the set of parameters considered here and is compatible with the use of the lowest-order mixed finite elements available in FreeFem++ to compute the equilibrated stress reconstructions). 
In the deformed configuration, the body is in contact with the rigid foundation in a non-empty interval $I_C \subset \Gamma_C$ which is approximately 
$(0.279, 0.447)$. Figure \ref{fig:Rvertical displacement} shows the vertical displacement in the deformed domain with an amplification factor equal to 5.
Moreover, in Figure \ref{fig:Rdisplacement contact}, which display the contact boundary part $\Gamma_C$ in the reference domain (black) and in the deformed domain (blue), we can easily identify the contact interval $I_C$.

We refine adaptively the initial mesh following the distribution of the local total estimator $\eta_{\text{tot},T}$, refining only the elements in which the value of this estimator is higher. In particular, at each refinement step, the 6\% elements with larger estimated error are refined, i.e., are subdivided into four sub-triangles dividing each edge by two. 
Figure \ref{fig:Rrefinement mesh} shows the initial mesh and the result of adaptive refinement after 3, 7, and 11 steps, respectively. We remark that the refinement is concentrated on the endpoints of $\Gamma_D$ (singularities due to the homogeneous Dirichlet conditions) and near the contact interval $I_C$.
Figure \ref{fig:Rcomparison H1+energy} compares the convergence on uniformly and adaptively refined meshes for the $H^1$-norm $\left\lVert\bar{\VEC{u}}_h-\VEC{u}_h\right\rVert_{1,\Omega}$ and for the energy norm $\energynorm{\bar{\VEC{u}}_h-\VEC{u}_h}$, showing the corresponding curves as functions of the number of degrees of freedom. In particular, the uniform refinement is performed dividing all triangles of the mesh into four sub-triangles.
The adaptive approach provides better convergence rates, i.e., adopting it we can achieve a fixed level of precision with fewer degrees of freedom. The asymptotic convergence rates are approximately 0.309 and 0.255 in the uniform case, and 0.450 and 0.449 in the adaptive case, for the $H^1$-norm and energy norm, respectively (the optimal convergence rate for smooth solutions is $0.5$). 

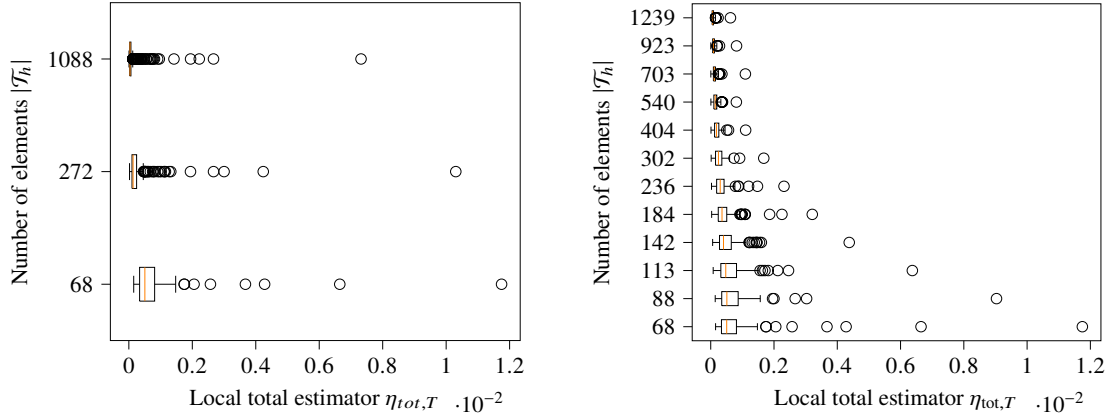
\begin{figure}[tb]
	\centering
	\begin{subfigure}{0.49\textwidth}
		\centering
		\resizebox{!}{0.75\textwidth}{%
		\begin{tikzpicture}
		
		\begin{axis}[
		tick align=outside,
		tick pos=left,
		x grid style={white!69.0196078431373!black},
		xlabel={Local total estimator  \(\displaystyle \eta_{tot,T}\)},
		xmin=-0.000582048838, xmax=0.012333783278,
		xtick style={color=black},
		y grid style={white!69.0196078431373!black},
		ylabel={Number of elements  \(\displaystyle |\mathcal{T}_h|\)},
		ymin=0.5, ymax=3.5,
		ytick style={color=black},
		ytick={1,2,3},
		yticklabels={68,272,1088}
		]
		\addplot [black]
		table {%
			0.0003391955 0.85
			0.0003391955 1.15
			0.000812641 1.15
			0.000812641 0.85
			0.0003391955 0.85
		};
		\addplot [black]
		table {%
			0.0003391955 1
			0.000153371 1
		};
		\addplot [black]
		table {%
			0.000812641 1
			0.00147556 1
		};
		\addplot [black]
		table {%
			0.000153371 0.925
			0.000153371 1.075
		};
		\addplot [black]
		table {%
			0.00147556 0.925
			0.00147556 1.075
		};
		\addplot [black, mark=o, mark size=2.5, mark options={solid,fill opacity=0}, only marks]
		table {%
			0.00257125 1
			0.00427881 1
			0.00367427 1
			0.0117467 1
			0.0017418 1
			0.00175141 1
			0.00205507 1
			0.00664734 1
		};
		\addplot [black]
		table {%
			9.9594525e-05 1.85
			9.9594525e-05 2.15
			0.00024469925 2.15
			0.00024469925 1.85
			9.9594525e-05 1.85
		};
		\addplot [black]
		table {%
			9.9594525e-05 2
			2.15639e-05 2
		};
		\addplot [black]
		table {%
			0.00024469925 2
			0.000454826 2
		};
		\addplot [black]
		table {%
			2.15639e-05 1.925
			2.15639e-05 2.075
		};
		\addplot [black]
		table {%
			0.000454826 1.925
			0.000454826 2.075
		};
		\addplot [black, mark=o, mark size=2.5, mark options={solid,fill opacity=0}, only marks]
		table {%
			0.00060749 2
			0.000548118 2
			0.000963014 2
			0.000501367 2
			0.00300154 2
			0.00046648 2
			0.000908374 2
			0.0008131 2
			0.00266886 2
			0.00127031 2
			0.00194236 2
			0.00112046 2
			0.0103036 2
			0.00101756 2
			0.00132162 2
			0.000776645 2
			0.000517528 2
			0.000494967 2
			0.000515152 2
			0.000581396 2
			0.000618537 2
			0.000491316 2
			0.000465734 2
			0.000678604 2
			0.00423561 2
			0.000749989 2
			0.00110929 2
			0.00115297 2
		};
		\addplot [black]
		table {%
			2.6977875e-05 2.85
			2.6977875e-05 3.15
			6.558055e-05 3.15
			6.558055e-05 2.85
			2.6977875e-05 2.85
		};
		\addplot [black]
		table {%
			2.6977875e-05 3
			5.03444e-06 3
		};
		\addplot [black]
		table {%
			6.558055e-05 3
			0.000122772 3
		};
		\addplot [black]
		table {%
			5.03444e-06 2.925
			5.03444e-06 3.075
		};
		\addplot [black]
		table {%
			0.000122772 2.925
			0.000122772 3.075
		};
		\addplot [black, mark=o, mark size=2.5, mark options={solid,fill opacity=0}, only marks]
		table {%
			0.000127388 3
			0.000129227 3
			0.000124711 3
			0.000179513 3
			0.000171223 3
			0.000144164 3
			0.000173388 3
			0.000231408 3
			0.000240262 3
			0.000157255 3
			0.000201869 3
			0.000132047 3
			0.000175125 3
			0.000138022 3
			0.000197916 3
			0.000190849 3
			0.000400237 3
			0.00221915 3
			0.000381378 3
			0.000722008 3
			0.000152383 3
			0.000196939 3
			0.000351078 3
			0.000194275 3
			0.000233155 3
			0.000203482 3
			0.000176436 3
			0.000630527 3
			0.00194576 3
			0.000907025 3
			0.00142202 3
			0.000274426 3
			0.000472057 3
			0.000266009 3
			0.00036985 3
			0.000695539 3
			0.000294452 3
			0.000387566 3
			0.000400879 3
			0.000372881 3
			0.000495271 3
			0.000311264 3
			0.000363275 3
			0.000807468 3
			0.00731735 3
			0.000747546 3
			0.000959877 3
			0.000125908 3
			0.000173068 3
			0.000138717 3
			0.000191523 3
			0.000243897 3
			0.00054643 3
			0.000397719 3
			0.000190209 3
			0.000270823 3
			0.000238201 3
			0.000238602 3
			0.000139413 3
			0.000148929 3
			0.000169586 3
			0.000173312 3
			0.000131528 3
			0.000162278 3
			0.00013273 3
			0.00014486 3
			0.000135165 3
			0.000128905 3
			0.000127305 3
			0.000161848 3
			0.000135394 3
			0.000131553 3
			0.000131151 3
			0.000163817 3
			0.000162932 3
			0.000214043 3
			0.000170775 3
			0.000135243 3
			0.000131967 3
			0.000132732 3
			0.000237462 3
			0.000253062 3
			0.000232692 3
			0.000301417 3
			0.000181079 3
			0.000169379 3
			0.000151935 3
			0.000189814 3
			0.000198019 3
			0.000143038 3
			0.000220991 3
			0.000204997 3
			0.000271833 3
			0.00023255 3
			0.000239584 3
			0.000147245 3
			0.000256328 3
			0.000173793 3
			0.000127049 3
			0.00266798 3
			0.000560782 3
			0.000681679 3
			0.000777998 3
			0.000346006 3
			0.000234216 3
			0.000266713 3
			0.00019064 3
			0.000232423 3
			0.000214535 3
			0.000175494 3
			0.000153645 3
			0.000249433 3
			0.000273005 3
			0.000307101 3
			0.000356009 3
		};
		\addplot [orange]
		table {%
			0.0005049065 0.85
			0.0005049065 1.15
		};
		\addplot [orange]
		table {%
			0.0001331415 1.85
			0.0001331415 2.15
		};
		\addplot [orange]
		table {%
			3.6273e-05 2.85
			3.6273e-05 3.15
		};
		\end{axis}
		
		\end{tikzpicture}
		}
	\end{subfigure}
	\hfill
	\begin{subfigure}{0.49\textwidth}
		\centering
		\resizebox{!}{0.75\textwidth}{%
		\begin{tikzpicture}
		
		\begin{axis}[
		tick align=outside,
		tick pos=left,
		x grid style={white!69.0196078431373!black},
		xlabel={Local total estimator  \(\displaystyle \eta_{\text{tot},T}\)},
		xmin=-0.0005854529695, xmax=0.0123339453795,
		xtick style={color=black},
		y grid style={white!69.0196078431373!black},
		ylabel={Number of elements  \(\displaystyle \lvert\mathcal{T}_h\rvert\)},
		ymin=0.5, ymax=12.5,
		ytick style={color=black},
		ytick={1,2,3,4,5,6,7,8,9,10,11,12},
		yticklabels={68,88,113,142,184,236,302,404,540,703,923,1239}
		]
		\addplot [black]
		table {%
			0.0003391955 0.75
			0.0003391955 1.25
			0.000812641 1.25
			0.000812641 0.75
			0.0003391955 0.75
		};
		\addplot [black]
		table {%
			0.0003391955 1
			0.000153371 1
		};
		\addplot [black]
		table {%
			0.000812641 1
			0.00147556 1
		};
		\addplot [black]
		table {%
			0.000153371 0.875
			0.000153371 1.125
		};
		\addplot [black]
		table {%
			0.00147556 0.875
			0.00147556 1.125
		};
		\addplot [black, mark=o, mark size=2.5, mark options={solid,fill opacity=0}, only marks]
		table {%
			0.00257125 1
			0.00427881 1
			0.00367427 1
			0.0117467 1
			0.0017418 1
			0.00175141 1
			0.00205507 1
			0.00664734 1
		};
		\addplot [black]
		table {%
			0.0003478185 1.75
			0.0003478185 2.25
			0.00086744675 2.25
			0.00086744675 1.75
			0.0003478185 1.75
		};
		\addplot [black]
		table {%
			0.0003478185 2
			0.000141809 2
		};
		\addplot [black]
		table {%
			0.00086744675 2
			0.0015653 2
		};
		\addplot [black]
		table {%
			0.000141809 1.875
			0.000141809 2.125
		};
		\addplot [black]
		table {%
			0.0015653 1.875
			0.0015653 2.125
		};
		\addplot [black, mark=o, mark size=2.5, mark options={solid,fill opacity=0}, only marks]
		table {%
			0.00199921 2
			0.00903311 2
			0.00266975 2
			0.00303075 2
			0.00195459 2
		};
		\addplot [black]
		table {%
			0.000319578 2.75
			0.000319578 3.25
			0.000815071 3.25
			0.000815071 2.75
			0.000319578 2.75
		};
		\addplot [black]
		table {%
			0.000319578 3
			7.74106e-05 3
		};
		\addplot [black]
		table {%
			0.000815071 3
			0.00152067 3
		};
		\addplot [black]
		table {%
			7.74106e-05 2.875
			7.74106e-05 3.125
		};
		\addplot [black]
		table {%
			0.00152067 2.875
			0.00152067 3.125
		};
		\addplot [black, mark=o, mark size=2.5, mark options={solid,fill opacity=0}, only marks]
		table {%
			0.00245873 3
			0.00163337 3
			0.00171372 3
			0.00156068 3
			0.006373 3
			0.00181922 3
			0.00211935 3
		};
		\addplot [black]
		table {%
			0.00027765325 3.75
			0.00027765325 4.25
			0.0006497465 4.25
			0.0006497465 3.75
			0.00027765325 3.75
		};
		\addplot [black]
		table {%
			0.00027765325 4
			6.02331e-05 4
		};
		\addplot [black]
		table {%
			0.0006497465 4
			0.00110944 4
		};
		\addplot [black]
		table {%
			6.02331e-05 3.875
			6.02331e-05 4.125
		};
		\addplot [black]
		table {%
			0.00110944 3.875
			0.00110944 4.125
		};
		\addplot [black, mark=o, mark size=2.5, mark options={solid,fill opacity=0}, only marks]
		table {%
			0.00122844 4
			0.0014598 4
			0.00123247 4
			0.00145665 4
			0.00137446 4
			0.0012103 4
			0.00143921 4
			0.0016003 4
			0.00437782 4
			0.00131554 4
			0.0015424 4
		};
		\addplot [black]
		table {%
			0.00023791125 4.75
			0.00023791125 5.25
			0.00050634575 5.25
			0.00050634575 4.75
			0.00023791125 4.75
		};
		\addplot [black]
		table {%
			0.00023791125 5
			2.94232e-05 5
		};
		\addplot [black]
		table {%
			0.00050634575 5
			0.000907914 5
		};
		\addplot [black]
		table {%
			2.94232e-05 4.875
			2.94232e-05 5.125
		};
		\addplot [black]
		table {%
			0.000907914 4.875
			0.000907914 5.125
		};
		\addplot [black, mark=o, mark size=2.5, mark options={solid,fill opacity=0}, only marks]
		table {%
			0.00225124 5
			0.00186072 5
			0.000948111 5
			0.00108239 5
			0.000972145 5
			0.00320953 5
			0.000922495 5
			0.00107909 5
			0.000997824 5
			0.000968645 5
			0.00109225 5
		};
		\addplot [black]
		table {%
			0.000186753 5.75
			0.000186753 6.25
			0.00042178825 6.25
			0.00042178825 5.75
			0.000186753 5.75
		};
		\addplot [black]
		table {%
			0.000186753 6
			2.45278e-05 6
		};
		\addplot [black]
		table {%
			0.00042178825 6
			0.000763704 6
		};
		\addplot [black]
		table {%
			2.45278e-05 5.875
			2.45278e-05 6.125
		};
		\addplot [black]
		table {%
			0.000763704 5.875
			0.000763704 6.125
		};
		\addplot [black, mark=o, mark size=2.5, mark options={solid,fill opacity=0}, only marks]
		table {%
			0.000867919 6
			0.000884352 6
			0.00147776 6
			0.00119664 6
			0.000781148 6
			0.00231595 6
		};
		\addplot [black]
		table {%
			0.00015172675 6.75
			0.00015172675 7.25
			0.000351395 7.25
			0.000351395 6.75
			0.00015172675 6.75
		};
		\addplot [black]
		table {%
			0.00015172675 7
			1.38682e-05 7
		};
		\addplot [black]
		table {%
			0.000351395 7
			0.000596088 7
		};
		\addplot [black]
		table {%
			1.38682e-05 6.875
			1.38682e-05 7.125
		};
		\addplot [black]
		table {%
			0.000596088 6.875
			0.000596088 7.125
		};
		\addplot [black, mark=o, mark size=2.5, mark options={solid,fill opacity=0}, only marks]
		table {%
			0.000913515 7
			0.000736531 7
			0.00167648 7
		};
		\addplot [black]
		table {%
			0.000122155 7.75
			0.000122155 8.25
			0.00025601425 8.25
			0.00025601425 7.75
			0.000122155 7.75
		};
		\addplot [black]
		table {%
			0.000122155 8
			8.21833e-06 8
		};
		\addplot [black]
		table {%
			0.00025601425 8
			0.000452621 8
		};
		\addplot [black]
		table {%
			8.21833e-06 7.875
			8.21833e-06 8.125
		};
		\addplot [black]
		table {%
			0.000452621 7.875
			0.000452621 8.125
		};
		\addplot [black, mark=o, mark size=2.5, mark options={solid,fill opacity=0}, only marks]
		table {%
			0.000505872 8
			0.000562396 8
			0.00110365 8
		};
		\addplot [black]
		table {%
			9.948105e-05 8.75
			9.948105e-05 9.25
			0.0001912865 9.25
			0.0001912865 8.75
			9.948105e-05 8.75
		};
		\addplot [black]
		table {%
			9.948105e-05 9
			4.91566e-06 9
		};
		\addplot [black]
		table {%
			0.0001912865 9
			0.000320171 9
		};
		\addplot [black]
		table {%
			4.91566e-06 8.875
			4.91566e-06 9.125
		};
		\addplot [black]
		table {%
			0.000320171 8.875
			0.000320171 9.125
		};
		\addplot [black, mark=o, mark size=2.5, mark options={solid,fill opacity=0}, only marks]
		table {%
			0.000343998 9
			0.000346845 9
			0.000332804 9
			0.000373113 9
			0.000355079 9
			0.000363022 9
			0.000358802 9
			0.000810181 9
			0.000343055 9
		};
		\addplot [black]
		table {%
			8.44456e-05 9.75
			8.44456e-05 10.25
			0.0001491825 10.25
			0.0001491825 9.75
			8.44456e-05 9.75
		};
		\addplot [black]
		table {%
			8.44456e-05 10
			2.94829e-06 10
		};
		\addplot [black]
		table {%
			0.0001491825 10
			0.000245027 10
		};
		\addplot [black]
		table {%
			2.94829e-06 9.875
			2.94829e-06 10.125
		};
		\addplot [black]
		table {%
			0.000245027 9.875
			0.000245027 10.125
		};
		\addplot [black, mark=o, mark size=2.5, mark options={solid,fill opacity=0}, only marks]
		table {%
			0.000258828 10
			0.000264714 10
			0.000257244 10
			0.000260662 10
			0.000268404 10
			0.000274537 10
			0.00025666 10
			0.00109557 10
			0.000265348 10
			0.000353181 10
			0.000280113 10
			0.000246707 10
			0.000292497 10
		};
		\addplot [black]
		table {%
			6.261285e-05 10.75
			6.261285e-05 11.25
			0.000115371 11.25
			0.000115371 10.75
			6.261285e-05 10.75
		};
		\addplot [black]
		table {%
			6.261285e-05 11
			1.79241e-06 11
		};
		\addplot [black]
		table {%
			0.000115371 11
			0.000192781 11
		};
		\addplot [black]
		table {%
			1.79241e-06 10.875
			1.79241e-06 11.125
		};
		\addplot [black]
		table {%
			0.000192781 10.875
			0.000192781 11.125
		};
		\addplot [black, mark=o, mark size=2.5, mark options={solid,fill opacity=0}, only marks]
		table {%
			0.000197613 11
			0.000210392 11
			0.000815648 11
			0.000276945 11
			0.000198486 11
		};
		\addplot [black]
		table {%
			4.541865e-05 11.75
			4.541865e-05 12.25
			8.83022e-05 12.25
			8.83022e-05 11.75
			4.541865e-05 11.75
		};
		\addplot [black]
		table {%
			4.541865e-05 12
			1.79492e-06 12
		};
		\addplot [black]
		table {%
			8.83022e-05 12
			0.000149927 12
		};
		\addplot [black]
		table {%
			1.79492e-06 11.875
			1.79492e-06 12.125
		};
		\addplot [black]
		table {%
			0.000149927 11.875
			0.000149927 12.125
		};
		\addplot [black, mark=o, mark size=2.5, mark options={solid,fill opacity=0}, only marks]
		table {%
			0.00015307 12
			0.000162991 12
			0.000161772 12
			0.000156332 12
			0.00062354 12
			0.000232858 12
			0.000157062 12
			0.000158747 12
			0.000154072 12
		};
		\addplot [orange]
		table {%
			0.0005049065 0.75
			0.0005049065 1.25
		};
		\addplot [orange]
		table {%
			0.00050997 1.75
			0.00050997 2.25
		};
		\addplot [orange]
		table {%
			0.000474967 2.75
			0.000474967 3.25
		};
		\addplot [orange]
		table {%
			0.0004035615 3.75
			0.0004035615 4.25
		};
		\addplot [orange]
		table {%
			0.000356605 4.75
			0.000356605 5.25
		};
		\addplot [orange]
		table {%
			0.0003000335 5.75
			0.0003000335 6.25
		};
		\addplot [orange]
		table {%
			0.0002440125 6.75
			0.0002440125 7.25
		};
		\addplot [orange]
		table {%
			0.000183152 7.75
			0.000183152 8.25
		};
		\addplot [orange]
		table {%
			0.000142186 8.75
			0.000142186 9.25
		};
		\addplot [orange]
		table {%
			0.000112765 9.75
			0.000112765 10.25
		};
		\addplot [orange]
		table {%
			8.74541e-05 10.75
			8.74541e-05 11.25
		};
		\addplot [orange]
		table {%
			6.25907e-05 11.75
			6.25907e-05 12.25
		};
		\end{axis}
		
		\end{tikzpicture}
		}
	\end{subfigure}
	\caption{Evolution of the distribution of the local total estimator $\eta_{\text{tot},T}$ over the mesh with uniform refinement (\emph{left}) and adaptive refinement (\emph{right}). The right panel shows that the interval $(\min_{T\in\mathcal{T}_h} \eta_{\text{tot},T}, \max_{T\in\mathcal{T}_h} \eta_{\text{tot},T})$ shrinks much faster in the adaptively refined case than in the uniformly refined one. The labels of the $y$-axis report the number of elements of the corresponding mesh.}
	\label{fig:Rboxplot}
\end{figure}

\begin{figure}[tb]
	\centering
	\begin{subfigure}{0.49\textwidth}
		\centering
		\includegraphics[width=0.9\textwidth]{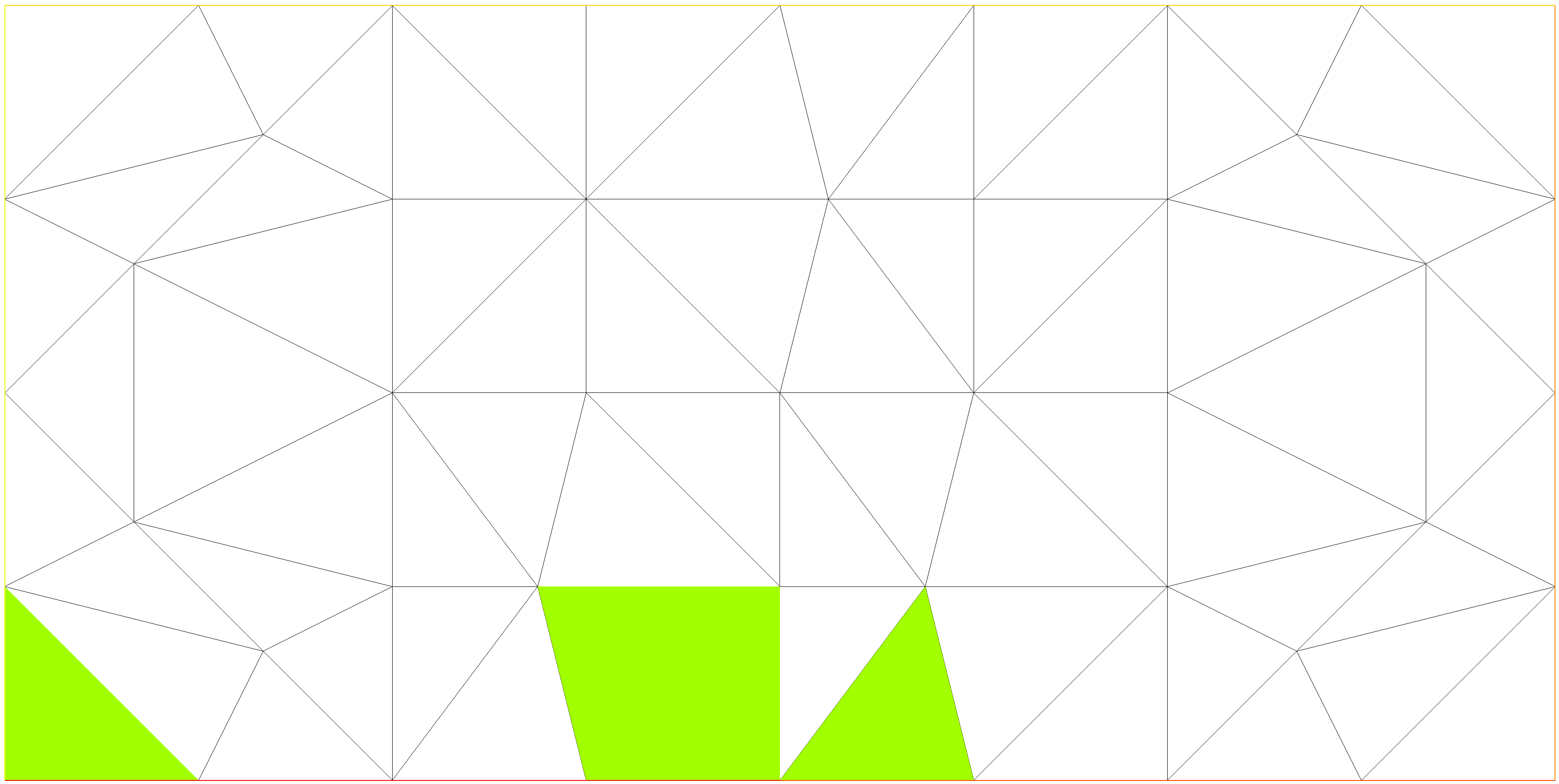}
	\end{subfigure}
	\hfill
	\begin{subfigure}{0.49\textwidth}
		\centering
		\includegraphics[width=0.9\textwidth]{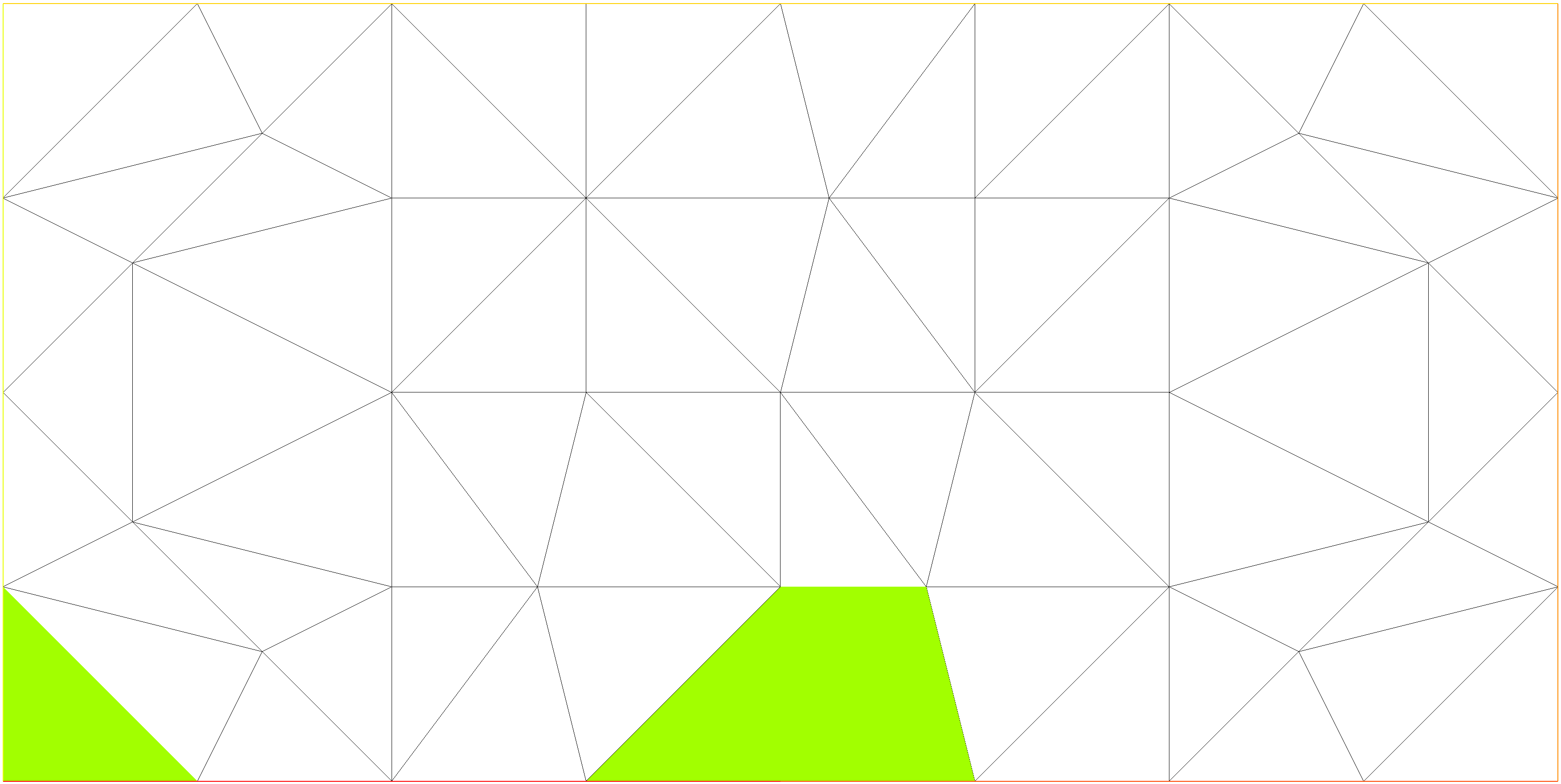}
	\end{subfigure}
	\begin{subfigure}{0.49\textwidth}
		\vspace{0.6cm}
		\centering
		\includegraphics[width=0.9\textwidth]{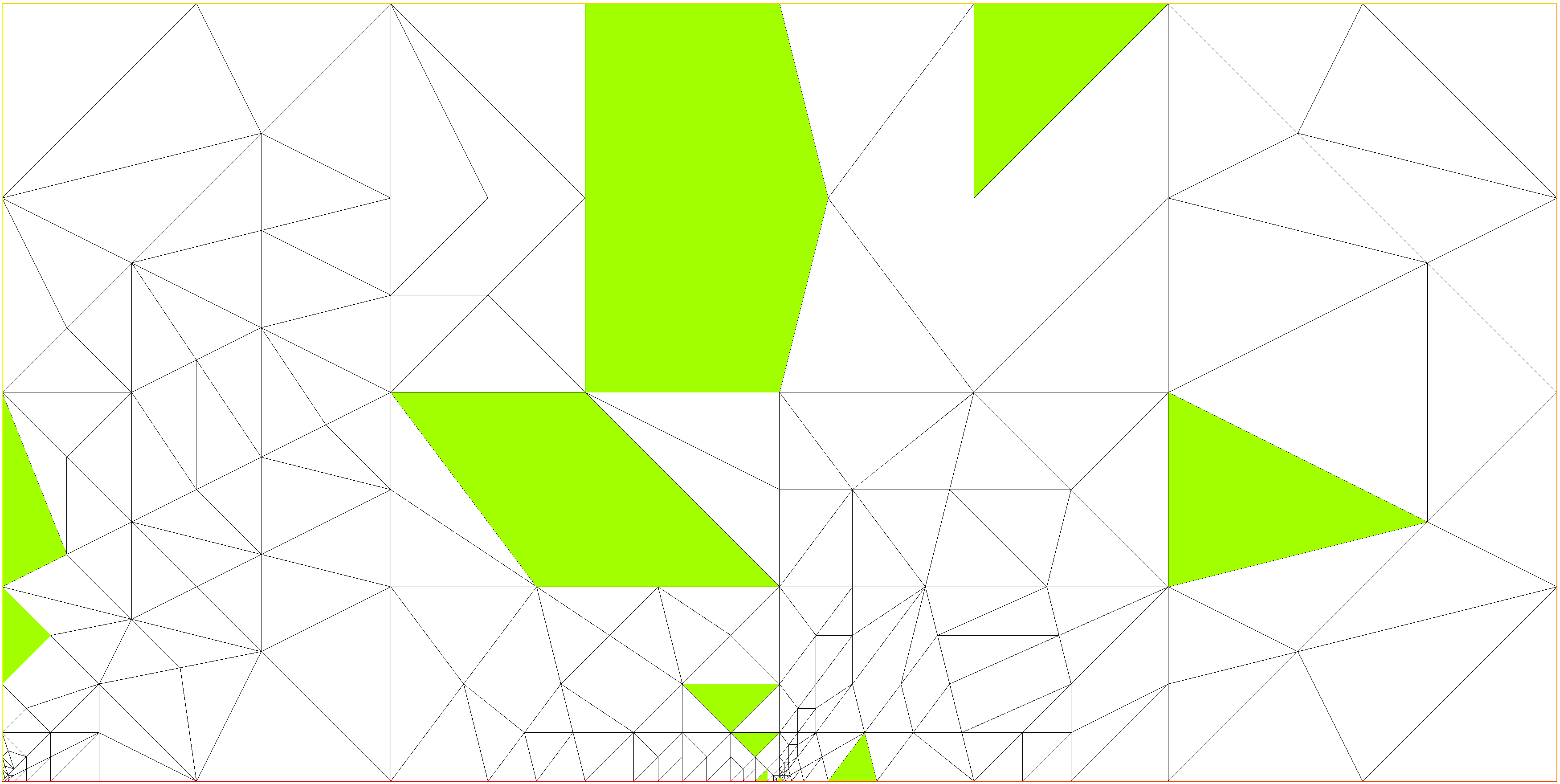}
	\end{subfigure}
	\hfill
	\begin{subfigure}{0.49\textwidth}
		\vspace{0.6cm}
		\centering
		\includegraphics[width=0.9\textwidth]{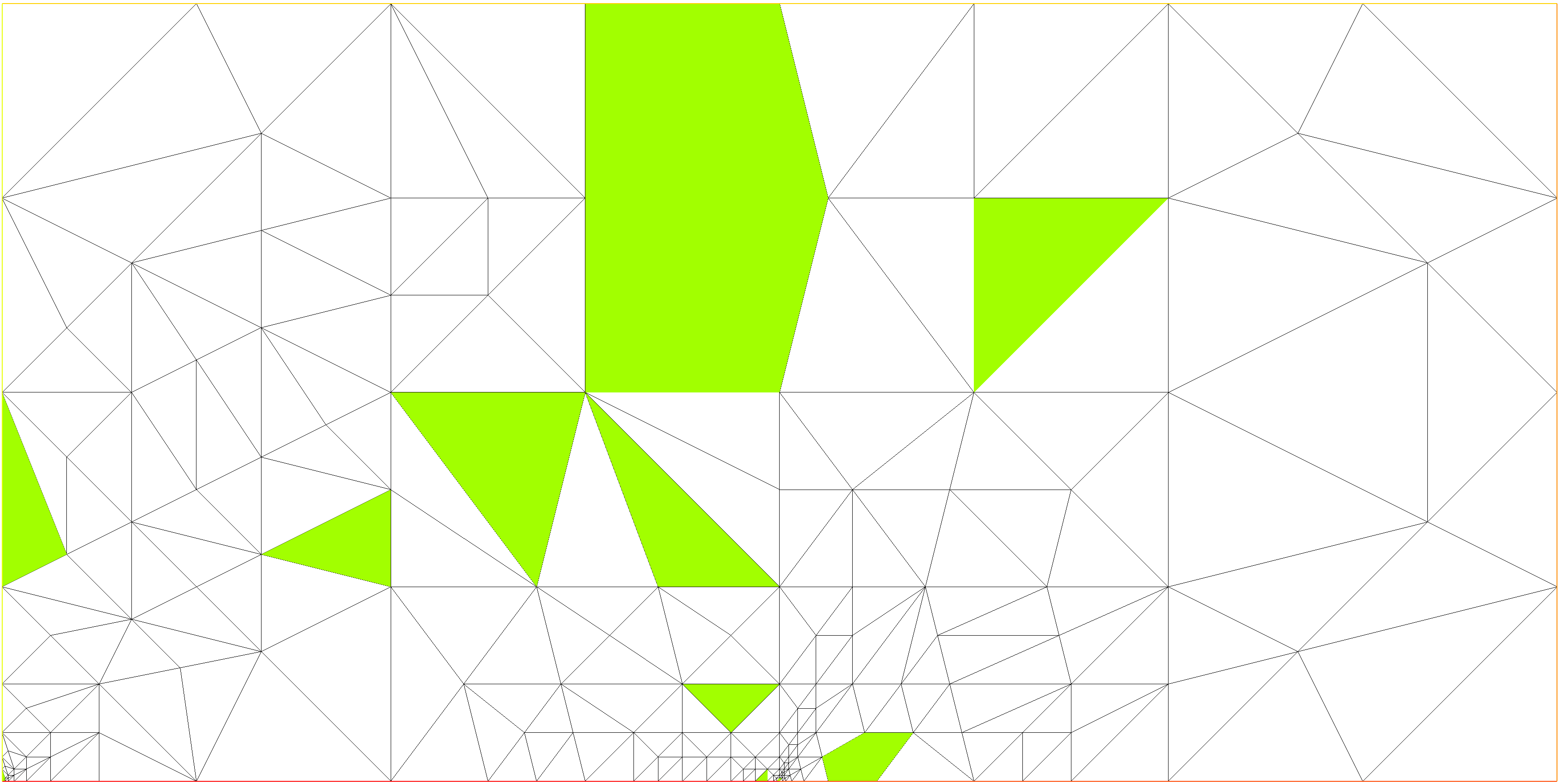}
	\end{subfigure}
	\begin{subfigure}{0.49\textwidth}
		\vspace{0.6cm}
		\centering
		\includegraphics[width=0.9\textwidth]{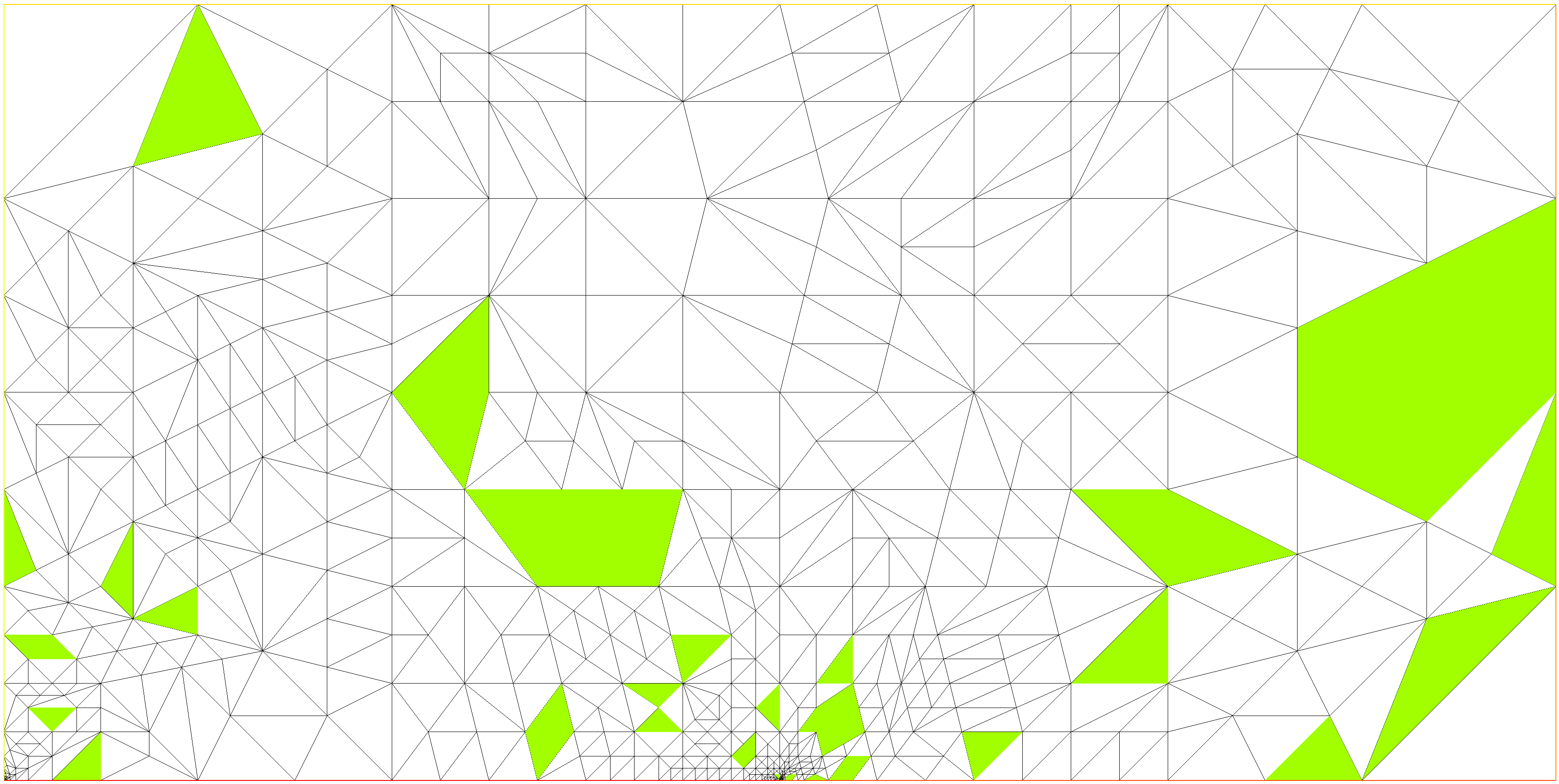}
	\end{subfigure}
	\hfill
	\begin{subfigure}{0.49\textwidth}
		\vspace{0.6cm}
		\centering
		\includegraphics[width=0.9\textwidth]{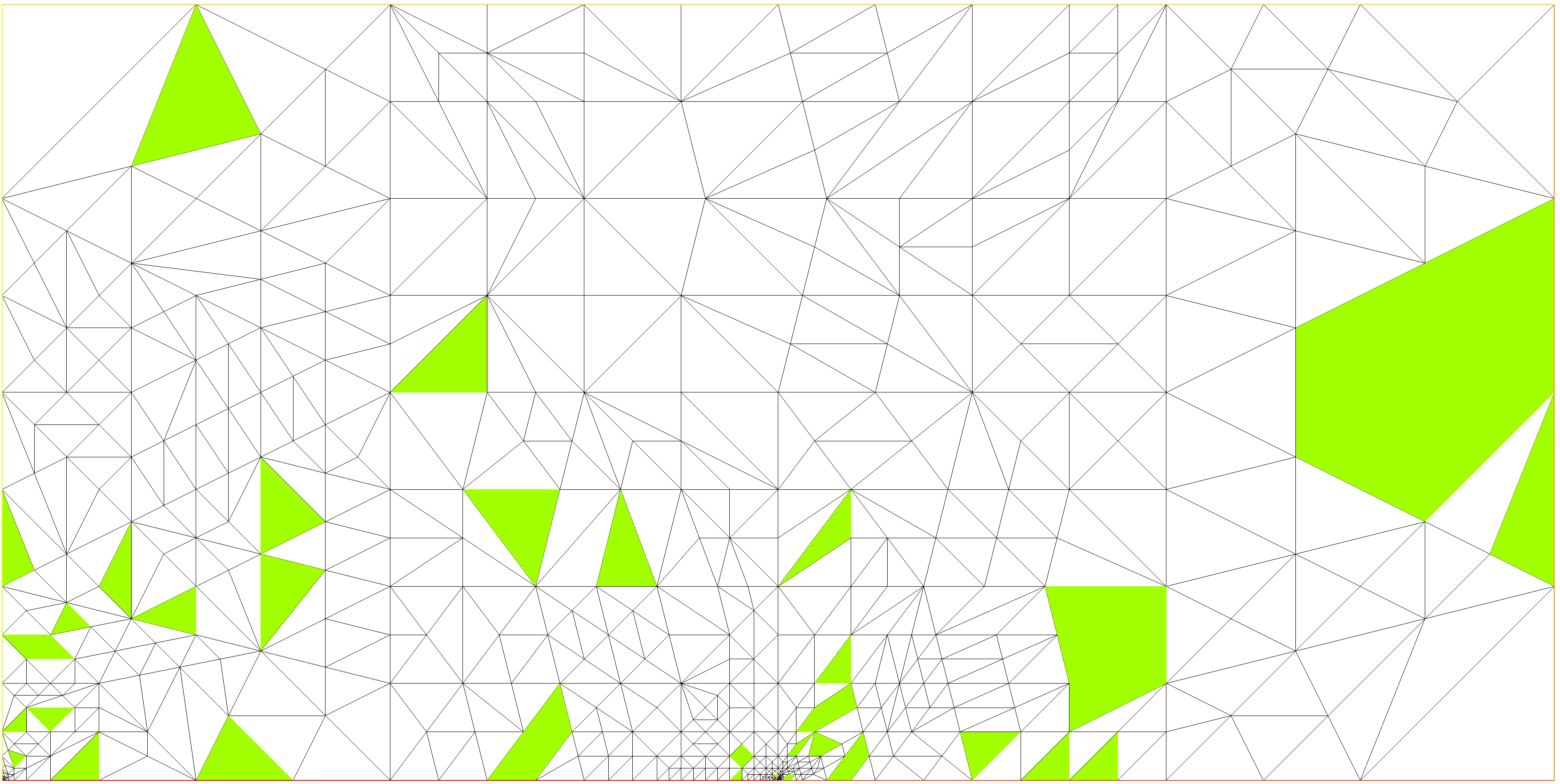}
	\end{subfigure}
	\caption{Triangles to refine following the distribution of $\energynorm{\bar{\VEC{u}}_h-\VEC{u}_h}$ (\emph{left}) and of $\eta_{\rm tot}$ (\emph{left}) for the initial mesh (\emph{top}), and adaptively refined mesh after 6 and 10 steps (\emph{middle} and \emph{bottom}, respectively).}
	\label{fig:Rrefinement mesh comparison}
\end{figure}

\begin{table}[tb]
	\centering
	\begin{tabular}{c|cccccccccccc}
		\toprule
		& Initial & 1\textsuperscript{st} & 2\textsuperscript{nd} & 3\textsuperscript{rd} & 4\textsuperscript{th} & 5\textsuperscript{th} & 6\textsuperscript{th} & 7\textsuperscript{th} & 8\textsuperscript{th} & 9\textsuperscript{th} & 10\textsuperscript{th} & 11\textsuperscript{th} \\
		\midrule
		$N_{\text{reg}}$ & 7 & 0 & 1 & 0 & 0 & 0 & 0 & 0 & 0 & 0 & 0 & 0 \\
		\hline
		$N_{\text{lin}}$ & 26 & 2 & 4 & 5 & 3 & 4 & 4 & 4 & 5 & 8 & 8 & 7 \\
		\bottomrule
	\end{tabular}
	\caption{Number of regularization iterations $N_{\text{reg}}$ and Newton iterations $N_{\text{lin}}$ at each refinement step of the Algorithm \ref{algorithm}.}
	\label{tab:Rnumber of regularization and linearization iterations}
\end{table}

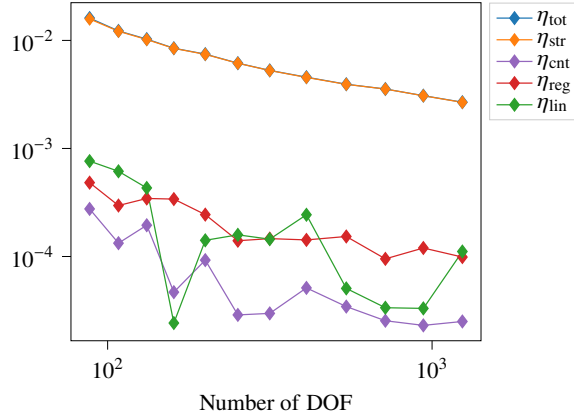
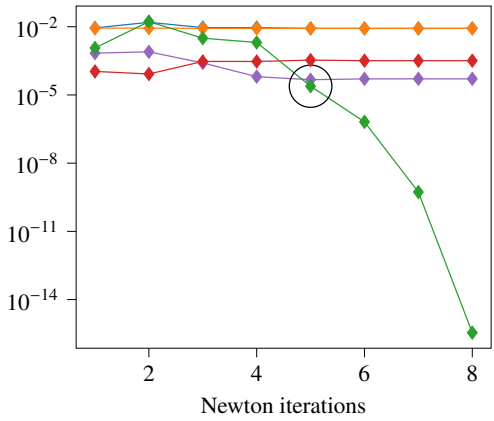
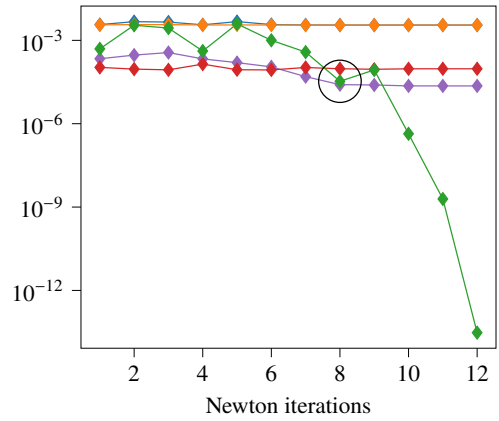
\begin{figure}[tb]
	\centering
	\begin{subfigure}{0.49\textwidth}
		\centering
		\resizebox{!}{0.75\textwidth}{%
		\begin{tikzpicture}
		
		\begin{axis}[
		legend cell align={left},
		legend style={
			fill opacity=1,
			draw opacity=1,
			text opacity=1,
			at={(1.02,1)},
			anchor=north west,
			draw=white!80!black
		},
		log basis x={10},
		log basis y={10},
		tick align=outside,
		tick pos=left,
		x grid style={white!69.0196078431373!black},
		xlabel={\(\displaystyle \mathrm{Number\ of\ DOF}\)},
		xmin=77.0966879864317, xmax=1415.36559935239,
		xmode=log,
		xtick style={color=black},
		xtick={100,1000},
		xticklabels={\(\displaystyle {10^{2}}\),\(\displaystyle {10^{3}}\)},
		y grid style={white!69.0196078431373!black},
		ymin=1.66086577220755e-05, ymax=0.0223475501817069,
		ymode=log,
		ytick style={color=black},
		ytick={1e-06,1e-05,0.0001,0.001,0.01,0.1,1},
		yticklabels={
			\(\displaystyle {10^{-6}}\),
			\(\displaystyle {10^{-5}}\),
			\(\displaystyle {10^{-4}}\),
			\(\displaystyle {10^{-3}}\),
			\(\displaystyle {10^{-2}}\),
			\(\displaystyle {10^{-1}}\),
			\(\displaystyle {10^{0}}\)
		}
		]
		\addplot [semithick, colorEstTot, mark=diamond*, mark size=3, mark options={solid}]
		table {%
			88 0.0161067309281877
			108 0.0122424923396395
			132 0.0102806921820822
			160 0.00846658210022344
			200 0.00746614304893199
			252 0.00615813412944203
			316 0.00527826737670964
			410 0.00456788050334957
			544 0.00393067123214782
			718 0.00354995917410312
			940 0.00308044974540385
			1240 0.00268018579431163
		};
		\addlegendentry{$\eta_{\mathrm{tot}}$}
		
		\addplot [semithick, colorEstFlux, mark=diamond*, mark size=2.8, mark options={solid}]
		table {%
			88 0.0157900364238697
			108 0.0121525010447791
			132 0.0101861689581428
			160 0.00843825050636362
			200 0.00743042563528934
			252 0.00614199070929103
			316 0.00526285980644922
			410 0.00454402069139958
			544 0.00391981736964773
			718 0.00354590924453527
			940 0.00307446026515113
			1240 0.00266979048184193
		};
		\addlegendentry{$\eta_{\mathrm{str}}$}
		
		\addplot [semithick, colorEstDisc, mark=diamond*, mark size=3, mark options={solid}]
		table {%
			88 0.000275692300669159
			108 0.000132863426281898
			132 0.000194755403588718
			160 4.67767481466012e-05
			200 9.25971012361318e-05
			252 2.88020589995251e-05
			316 2.97537996053192e-05
			410 5.12145396974364e-05
			544 3.44070361960387e-05
			718 2.54384956372241e-05
			940 2.30439568121995e-05
			1240 2.504680607899e-05
		};
		\addlegendentry{$\eta_{\mathrm{cnt}}$}
		
		\addplot [semithick, colorEstReg, mark=diamond*, mark size=3, mark options={solid}]
		table {%
			88 0.000482545056548585
			108 0.000296362258812555
			132 0.000344493357704873
			160 0.000340038196617796
			200 0.000244132682718152
			252 0.000139935060751482
			316 0.000146400703341916
			410 0.000142411051483284
			544 0.000153217269055861
			718 9.50812381498946e-05
			940 0.000119881629938028
			1240 9.88129392085373e-05
		};
		\addlegendentry{$\eta_{\mathrm{reg}}$}
		
		\addplot [semithick, colorEstLin, mark=diamond*, mark size=3, mark options={solid}]
		table {%
			88 0.000763622940608494
			108 0.000614748491638921
			132 0.000431007394490703
			160 2.42214414007696e-05
			200 0.000141201335286458
			252 0.00015872620330927
			316 0.000144684250847324
			410 0.000243399647487574
			544 5.0671978566673e-05
			718 3.35720741235714e-05
			940 3.30887490003539e-05
			1240 0.000111209723819221
		};
		\addlegendentry{$\eta_{\mathrm{lin}}$}
		\end{axis}
		
		\end{tikzpicture}
		}
		\subcaption{Global estimators with the stopping criteria.}
		\label{fig:Rtotal estimators stopping criteria}
	\end{subfigure}
	\vspace{6mm}
	\hfill \\
	\begin{subfigure}{0.49\textwidth}
		\centering
		\resizebox{!}{0.75\textwidth}{%
		\begin{tikzpicture}
		
		\definecolor{color0}{rgb}{0.12156862745098,0.466666666666667,0.705882352941177}
		\definecolor{color1}{rgb}{1,0.498039215686275,0.0549019607843137}
		\definecolor{color2}{rgb}{0.172549019607843,0.627450980392157,0.172549019607843}
		\definecolor{color3}{rgb}{0.83921568627451,0.152941176470588,0.156862745098039}
		\definecolor{color4}{rgb}{0.580392156862745,0.403921568627451,0.741176470588235}
		
		\begin{axis}[
		legend cell align={left},
		legend style={
			fill opacity=1,
			draw opacity=1,
			text opacity=1,
			at={(1.02,1)},
			anchor=north west,
			draw=white!80!black
		},
		log basis y={10},
		tick align=outside,
		tick pos=left,
		x grid style={white!69.0196078431373!black},
		xlabel={\(\displaystyle \mathrm{Newton\ iterations}\)},
		xmin=0.65, xmax=8.35,
		xtick style={color=black},
		xtick={2,4,6,8},
		xticklabels={
			\(\displaystyle {2}\),
			\(\displaystyle {4}\),
			\(\displaystyle {6}\),
			\(\displaystyle {8}\)
		},
		y grid style={white!69.0196078431373!black},
		ymin=7.24232117617029e-17, ymax=0.0811486968834287,
		ymode=log,
		ytick style={color=black},
		ytick={1e-20,1e-17,1e-14,1e-11,1e-08,1e-05,0.01,10,10000},
		yticklabels={
			\(\displaystyle {10^{-20}}\),
			\(\displaystyle {10^{-17}}\),
			\(\displaystyle {10^{-14}}\),
			\(\displaystyle {10^{-11}}\),
			\(\displaystyle {10^{-8}}\),
			\(\displaystyle {10^{-5}}\),
			\(\displaystyle {10^{-2}}\),
			\(\displaystyle {10^{1}}\),
			\(\displaystyle {10^{4}}\)
		}
		]
		\addplot [semithick, black, mark=o, mark size=10, mark options={solid,fill opacity=0}, only marks, forget plot]
		table {%
			5 2.42214414007696e-05
		};
		\addplot [semithick, colorEstTot, mark=diamond*, mark size=3, mark options={solid}]
		table {%
			1 0.00886895505468049
			2 0.0156520956263925
			3 0.00918347848293294
			4 0.00912320789481024
			5 0.00846658210022344
			6 0.00846216719568984
			7 0.00846205249166506
			8 0.00846205245822907
		};
		
		\addplot [semithick, colorEstFlux, mark=diamond*, mark size=3, mark options={solid}]
		table {%
			1 0.00867249376309238
			2 0.00867903580485046
			3 0.00846826860226237
			4 0.00842685167924894
			5 0.00843825050636362
			6 0.00843822628047762
			7 0.00843822522753046
			8 0.00843822528563166
		};
		
		\addplot [semithick, colorEstDisc, mark=diamond*, mark size=3, mark options={solid}]
		table {%
			1 0.000692691230992491
			2 0.000786087768928768
			3 0.000256339617055796
			4 6.36993337721891e-05
			5 4.67767481466012e-05
			6 5.07955571967095e-05
			7 5.10288315340424e-05
			8 5.10290225645116e-05
		};
		
		\addplot [semithick, colorEstReg, mark=diamond*, mark size=3, mark options={solid}]
		table {%
			1 0.00010854719598036
			2 8.34136820799527e-05
			3 0.000297546195075444
			4 0.000296805217610341
			5 0.000340038196617796
			6 0.000321790437702206
			7 0.000321339331039565
			8 0.000321338973357954
		};
		
		\addplot [semithick, colorEstLin, mark=diamond*, mark size=3, mark options={solid}]
		table {%
			1 0.00116759761268164
			2 0.0167964593199034
			3 0.00313332906888427
			4 0.00203411107440218
			5 2.42214414007696e-05
			6 6.53524439294408e-07
			7 5.35257445311509e-10
			8 3.4989810332292e-16
		};
		\end{axis}
		
		\end{tikzpicture}
		}
		\subcaption{3rd adaptively refined mesh.}
		\label{fig:Rtotal estimators 5th}
	\end{subfigure}
	\begin{subfigure}{0.49\textwidth}
		\centering
		\resizebox{!}{0.75\textwidth}{%
		\begin{tikzpicture}
		
		\definecolor{color0}{rgb}{0.12156862745098,0.466666666666667,0.705882352941177}
		\definecolor{color1}{rgb}{1,0.498039215686275,0.0549019607843137}
		\definecolor{color2}{rgb}{0.172549019607843,0.627450980392157,0.172549019607843}
		\definecolor{color3}{rgb}{0.83921568627451,0.152941176470588,0.156862745098039}
		\definecolor{color4}{rgb}{0.580392156862745,0.403921568627451,0.741176470588235}
		
		\begin{axis}[
		legend cell align={left},
		legend style={
			fill opacity=1,
			draw opacity=1,
			text opacity=1,
			at={(1.02,1)},
			anchor=north west,
			draw=white!80!black
		},
		log basis y={10},
		tick align=outside,
		tick pos=left,
		x grid style={white!69.0196078431373!black},
		xlabel={\(\displaystyle \mathrm{Newton\ iterations}\)},
		xmin=0.45, xmax=12.55,
		xtick style={color=black},
		xtick={2,4,6,8,10,12},
		xticklabels={
			\(\displaystyle {2}\),
			\(\displaystyle {4}\),
			\(\displaystyle {6}\),
			\(\displaystyle {8}\),
			\(\displaystyle {10}\),
			\(\displaystyle {12}\)
		},
		y grid style={white!69.0196078431373!black},
		ymin=8.31109973258627e-15, ymax=0.0171386813527961,
		ymode=log,
		ytick style={color=black},
		ytick={1e-18,1e-15,1e-12,1e-09,1e-06,0.001,1,1000},
		yticklabels={
			\(\displaystyle {10^{-18}}\),
			\(\displaystyle {10^{-15}}\),
			\(\displaystyle {10^{-12}}\),
			\(\displaystyle {10^{-9}}\),
			\(\displaystyle {10^{-6}}\),
			\(\displaystyle {10^{-3}}\),
			\(\displaystyle {10^{0}}\),
			\(\displaystyle {10^{3}}\)
		}
		]
		\addplot [semithick, black, mark=o, mark size=10, mark options={solid,fill opacity=0}, only marks, forget plot]
		table {%
			8 3.35720741235714e-05
		};
		\addplot [semithick, colorEstTot, mark=diamond*, mark size=3, mark options={solid}]
		table {%
			1 0.00368243218591813
			2 0.00470888730300549
			3 0.00455803478316666
			4 0.00361821542548295
			5 0.00472319238563059
			6 0.00370203019785874
			7 0.00357207616499098
			8 0.00354995917410312
			9 0.00354983989659642
			10 0.00354811474306052
			11 0.00354810109506967
			12 0.00354811045284062
		};
		
		\addplot [semithick, colorEstFlux, mark=diamond*, mark size=3, mark options={solid}]
		table {%
			1 0.00360006910982575
			2 0.00369303017727755
			3 0.00369154679440686
			4 0.00357007888600865
			5 0.0035880518067944
			6 0.00356533410347973
			7 0.00354731914400419
			8 0.00354590924453527
			9 0.00354554460712866
			10 0.00354549467687667
			11 0.00354548834490896
			12 0.00354549774259006
		};
		
		\addplot [semithick, colorEstDisc, mark=diamond*, mark size=3, mark options={solid}]
		table {%
			1 0.000217615522168555
			2 0.000294137029232074
			3 0.000363764099082753
			4 0.000215311572431499
			5 0.00015780221765536
			6 0.000110992090298589
			7 4.94525778302834e-05
			8 2.54384956372241e-05
			9 2.47456174592694e-05
			10 2.29622674211289e-05
			11 2.28992550070809e-05
			12 2.28990133341044e-05
		};
		
		\addplot [semithick, colorEstReg, mark=diamond*, mark size=3, mark options={solid}]
		table {%
			1 0.000106508230105686
			2 9.2211601816027e-05
			3 8.67811346534672e-05
			4 0.000139460304052869
			5 8.74864736866408e-05
			6 8.62493302907503e-05
			7 0.000105371617807531
			8 9.50812381498946e-05
			9 9.08496789255673e-05
			10 9.45673077754595e-05
			11 9.47328096490061e-05
			12 9.47334606220761e-05
		};
		
		\addplot [semithick, colorEstLin, mark=diamond*, mark size=3, mark options={solid}]
		table {%
			1 0.000494309349934653
			2 0.00350213733671345
			3 0.00276841062164648
			4 0.000412684736935089
			5 0.00391767981758867
			6 0.0010043353066113
			7 0.000377547887346969
			8 3.35720741235714e-05
			9 8.61866979657707e-05
			10 4.33611673669147e-07
			11 1.95750678165015e-09
			12 3.01578420649252e-14
		};
		\end{axis}
		
		\end{tikzpicture}
		}
		\subcaption{9th adaptively refined mesh.}
		\label{fig:Rtotal estimators 9th}
	\end{subfigure}
	\caption{Global estimators $\eta_{\text{tot}}$, $\eta_{\text{str}}$, $\eta_{\text{lin}}$, $\eta_{\text{reg}}$ and $\eta_{\text{cnt}}$ as function of the number of degrees of freedom using the global stopping criteria of Algorithm \ref{algorithm} (\emph{top}), and as function of Newton iterations for the 3rd and 9th adaptively refined mesh (\emph{bottom-left} and \emph{bottom-right}, respectively).}
	\label{fig:Rtotal estimators}
\end{figure}

We recall that the measure of the error is the dual norm $\dualnormresidual{\VEC{u}_h}$ defined by \eqref{eq - definition dual norm}, which is not computable (it can be, however, approximated through an elliptic lifting). As a consequence, recalling Theorems \ref{theorem - upper bound of the energy norm} and \ref{theorem - lower bound of the energy norm}, we compare the global total estimator $\eta_{\text{tot}}$ with 
the following quantities:
\[
\mathcal{L}(\VEC{u}_h) \coloneqq \mu^{\nicefrac{1}{2}} \energynorm{\bar{\VEC{u}}_h-\VEC{u}_h}
\]
and
\[
\mathcal{U}(\VEC{u}_h) \coloneqq (d \lambda + 4\mu)^{\nicefrac{1}{2}} \energynorm{\bar{\VEC{u}}_h-\VEC{u}_h} + \left(\sum_{F\in\mathcal{F}_h^C} h_F \left\lVert \sigma^n(\bar{\VEC{u}}_h)-\left[P_{1,\gamma}^n(\VEC{u}_h)\right]_{\mathbb{R}^-}\right\rVert_F^2\right)^{\nicefrac{1}{2}}.
\]
In particular, the latter incorporates an additional error component on the contact interface.
Furthermore, we define the two following effectivity indices:
\[
I_{\text{eff,low}} \coloneqq \frac{\eta_{\text{tot}}}{\mathcal{L}(\VEC{u}_h)} = \frac{\eta_{\rm tot}}{\mu^{\nicefrac{1}{2}} \energynorm{\bar{\VEC{u}}_h-\VEC{u}_h}}
\qquad \text{and} \qquad
I_{\text{eff,up}} \coloneqq \frac{\eta_{\text{tot}}}{\mathcal{U}(\VEC{u}_h)}.
\]
The results are illustrated in Figure \ref{fig:Rcomparison energy+upper} and \ref{fig:Rcomparison effectivity}. The total estimator always remains between the energy norm rescaled by a Lamé parameter 
$\mathcal{L}(\VEC{u}_h)$
and the upper bound for the dual residual norm $\mathcal{U}(\VEC{u}_h)$, i.e., $I_{\text{eff,low}} > 1$ and $I_{\text{eff,up}} < 1$.
Figure \ref{fig:Rboxplot} shows the distribution of the local total estimator $\eta_{\text{tot},T}$ at each mesh refinement step in both the uniform and adaptive frameworks. 
Here, we use boxplots to see where values concentrate. With the adaptive approach, all the local estimators $\{\eta_{\text{tot},T}\}_{T\in \mathcal{T}_h}$ are contained in an interval that becomes smaller and smaller at each refinement iteration, and the decrease of the maximum value is significantly faster  than in the uniformly refined computation. Indeed, in the latter there is always a value which is much bigger than the others even if the number of degrees of freedom and the number of elements are high (in the last case, $\lvert{V}_h\rvert = 1156$ and $\lvert\mathcal{T}_h\rvert = 1088$), showing that the error concentrates in specific areas.
Figure \ref{fig:Rrefinement mesh comparison} compares the selection of triangles to refine (highlighted in green) using the distribution of the energy norm $\left\lVert \bar{\VEC{u}}_h-\VEC{u}_h \right\rVert_{\rm en, T}$ (\emph{left}) and the total estimator $\eta_{\rm tot, T}$ (\emph{right}) for the initial mesh and the adaptively refined mesh after 6 and 10 steps. The sets of selected triangles are concentrated in the same zones.

Finally, we apply the fully adaptive Algorithm \ref{algorithm} with $\gamma_{\text{reg}} = 0.04$ and $\gamma_{\text{lin}} = 0.08$. The initial regularization parameter $\delta$ is taken equal to the Young modulus $E$ and, at each step in which the global stopping criterion 
shown in Line \ref{alg:global stopping criterion reg} of Algorithm \ref{algorithm}
is not satisfied, we divide it by 2. Table \ref{tab:Rnumber of regularization and linearization iterations} contains the number of regularization and Newton iterations, denoted by $N_{\text{reg}}$ and $N_{\text{lin}}$ respectively, and Figure \ref{fig:Rtotal estimators stopping criteria} displays the curves of the different global estimators for 11 adaptive refinement steps as functions of the degrees of freedom. The same estimators are shown in Figure \ref{fig:Rtotal estimators 5th} and \ref{fig:Rtotal estimators 9th} as functions of the Newton iterations for the 3rd and 9th adaptively refined meshes. A circle underlines the step (5th and 8th, respectively) at which the global stopping criterion 
of Line \ref{alg:global stopping criterion lin} 
is reached. At this step, 
the regularization estimator satisfies the global stopping criterion of Line \ref{alg:global stopping criterion reg}, and the other ones have already stabilized.

\section*{Acknowledgements}

This project has received funding from the European Union’s Horizon 2020 research and innovation program under grant agreement No 847593.

\begin{small}
  
\bibliography{refs}
\bibliographystyle{siam}

\end{small}

\end{document}